\documentclass[11pt,a4paper]{article}

\usepackage[T1]{fontenc}
\usepackage[utf8]{inputenc}
\usepackage{lmodern}
\usepackage[margin=1in]{geometry}
\usepackage{microtype}

\RequirePackage{amsthm,amsmath,amsfonts,amssymb,mathtools}
\RequirePackage{aliascnt,enumitem}
\RequirePackage{xcolor}
\RequirePackage[numbers,sort&compress]{natbib}
\RequirePackage[colorlinks=true,linkcolor=blue,citecolor=blue,urlcolor=blue]{hyperref}
\RequirePackage[nameinlink,capitalize]{cleveref}

\newcommand{\E}{\mathbb E}
\newcommand{\Pp}{\mathbb P}
\newcommand{\R}{\mathbb R}
\newcommand{\Var}{\operatorname{Var}}
\newcommand{\Cov}{\operatorname{Cov}}
\newcommand{\1}{\mathbf 1}
\newcommand{\dd}{\,d}
\newcommand{\ii}{\mathrm i}
\newcommand{\norm}[1]{\left\lVert #1\right\rVert}

\newcommand{\cF}{\mathcal F}
\newcommand{\doiurl}[1]{\href{https://doi.org/#1}{\nolinkurl{#1}}}

\theoremstyle{plain}
\newtheorem{theorem}{Theorem}[section]
\newaliascnt{proposition}{theorem}
\newtheorem{proposition}[proposition]{Proposition}
\aliascntresetthe{proposition}
\newaliascnt{lemma}{theorem}
\newtheorem{lemma}[lemma]{Lemma}
\aliascntresetthe{lemma}
\newaliascnt{corollary}{theorem}
\newtheorem{corollary}[corollary]{Corollary}
\aliascntresetthe{corollary}
\theoremstyle{definition}
\newaliascnt{remark}{theorem}

\aliascntresetthe{remark}
\numberwithin{equation}{section}

\title{Optimal normal approximation for symmetric statistics}
\author{%
Bing-Yi~Jing\thanks{School of Artificial Intelligence, The Chinese University of Hong Kong, Shenzhen, Shenzhen 518272, China. Email: \href{mailto:bingyijing@cuhk.edu.cn}{\texttt{bingyijing@cuhk.edu.cn}}.}
\and
Yiming~Liu\thanks{School of Economics, Jinan University, Guangzhou 510632, China. Email: \href{mailto:liuyiming@jnu.edu.cn}{\texttt{liuyiming@jnu.edu.cn}}.}
\and
Shaochen~Wang\thanks{School of Mathematics, South China University of Technology, Guangzhou 510640, China. Email: \href{mailto:mascwang@scut.edu.cn}{\texttt{mascwang@scut.edu.cn}}.}
\and
Wang~Zhou\thanks{Department of Statistics and Data Science, National University of Singapore, Singapore 119077, Singapore. Email: \href{mailto:stazw@nus.edu.sg}{\texttt{stazw@nus.edu.sg}}.}}
\date{}
\hypersetup{
  pdftitle={Optimal normal approximation for symmetric statistics},
  pdfauthor={Bing-Yi Jing; Liu Yiming; Wang Shaochen; Zhou Wang},
  pdfsubject={Optimal normal approximation}
}

\begin{document}
\maketitle

\begin{abstract}
Rates of convergence in normal approximation are fundamental to probability and statistics. The theory has evolved from normalized sums to Studentized statistics, smooth functions of sample means, and $U$-statistics, and more generally to symmetric statistics. A central question throughout this development has been to identify the weakest assumptions under which optimal rates of convergence can be established.
First, in this paper we address this question by establishing optimal bounds for adjusted normal approximations of symmetric statistics based on independent and identically distributed observations. More precisely, we bound the Kolmogorov error by a very neat sum of the classical third-moment term and the variance of the remainder term. This resolves a long-standing conjecture posed by Bentkus, Jing and Zhou (\textit{Ann. Probab.} \textbf{37} (2009), 2174--2199).
Secondly, we establish the optimal Berry-Esseen rate for jackknife Studentized \(U\)-statistics of order two under minimal conditions, i.e., a finite third absolute moment of the first projection and a finite \(5/3\)-absolute moment of the canonical kernel.  This resolves a long-standing conjecture in classical probability theory. Our proofs combine signed Prawitz smoothing with a conditional-variance submartingale, polynomial approximation, and a bounded change of measure.
\end{abstract}

\begin{center}
\begin{minipage}{0.9\textwidth}
\small
\textbf{MSC:} Primary 60F05, 62E20; secondary 60E15, 62G20.

\smallskip
\noindent\textbf{Keywords:} Adjusted normal approximation; Berry--Esseen bound;
Hoeffding decomposition; Prawitz smoothing; Studentization;
\texorpdfstring{$U$-statistics}{U-statistics}.
\end{minipage}
\end{center}

\section{Introduction and main results}

\subsection{Introduction}

The relation between moment conditions and rates of normal approximation is a classical question in probability.  The Berry--Esseen theorem
bounds the error in the normal approximation to a sum of independent random variables by a normalized third absolute moment
\cite{Berry1941,Esseen1945}.  For identically distributed summands with a finite third moment, this gives the rate $n^{-1/2}$, where $n$ is the number of observations.  We ask whether this rate extends to nonlinear statistics and remains valid when the variance is estimated from the sample.  In both cases, one must control additional dependent terms.

For a centered symmetric statistic $S$, Hoeffding's decomposition
\cite{Hoeffding1948} separates the first projection from the orthogonal
higher-order components.  The first projection is a sum of independent
random variables and gives the usual linear approximation to $S$.
The sum of the higher-order components is the remainder.  Each component
is a sum of canonical kernels, whose integral in any one coordinate is
zero.  Normal approximation for $S$ then requires a bound on the effect
of the remainder.  Precise definitions are given in
Section~\ref{sec:main-results}.

For $U$-statistics of order two, Callaert and Janssen
\cite{CallaertJanssen1978} obtained a Berry--Esseen bound assuming a finite third
moment of the kernel.  General bounds for symmetric statistics were
proved by van Zwet \cite{vanZwet1984} and Friedrich \cite{Friedrich1989}.
For order-two $U$-statistics normalized by a nonrandom variance, the rate
$n^{-1/2}$ holds assuming a finite third moment of the first projection and
a finite $5/3$ moment of the canonical kernel; see Korolyuk and Borovskikh
\cite{KorolyukBorovskikh1986} and Friedrich \cite{Friedrich1989}.
The first projection is the conditional mean of the centered kernel
given a single observation; see \eqref{eq:student-projections}.
Bentkus, G\"otze and Zitikis \cite{BentkusGotzeZitikis1994} showed that
these exponents cannot be lowered in a uniform bound.
Bentkus and G\"otze \cite{BentkusGotze1996} treated higher orders, and
Borovskikh \cite{Borovskikh2001} obtained a bound using a weak $5/3$ moment.

The normal approximation can be refined using mixed moments of the
linear term and the remainder.  For order two, Alberink and Bentkus
\cite{AlberinkBentkus1,AlberinkBentkus2} proved that the error of this
adjusted normal approximation is bounded by the classical third-moment
term plus the variance of the remainder.  Bentkus, Jing and Zhou
\cite{BJZ2009} extended the adjusted approximation to every fixed order.
Their variance bound contains a squared logarithmic factor, which they
conjectured could be removed.  They also proved a log-free bound using
kernel moments of order strictly between one and two, and a bound for
smooth test functions that is linear in the remainder variance.
The question is whether this linear variance bound also holds in the
Kolmogorov distance, defined as the supremum of the absolute difference
between the distribution functions.

A second question concerns random normalization.  Arvesen
\cite{Arvesen1969} studied jackknife variance estimation for
$U$-statistics.  Berry--Esseen bounds for Studentized $U$-statistics were
proved by Callaert and Veraverbeke \cite{CallaertVeraverbeke1981}, Helmers
\cite{Helmers1985}, and Jing, Wang and Zhao \cite{JingWangZhao2000}.
For the Studentized sample mean, Bentkus and G\"otze
\cite{BentkusGotzeStudent1996} proved the classical third-moment bound.
For complete order-two $U$-statistics, Bentkus, Jing and Zhou
\cite[Theorem~4]{BJZ2009} obtained the rate $n^{-1/2}$ assuming a finite third
moment of the first projection and a finite moment of order $5/3+\varepsilon$
of the canonical kernel, for every fixed $\varepsilon>0$.  They conjectured
that $5/3$ suffices.  This exponent also appears in weak convergence
of Studentized $U$-processes \cite{Nasari2009}.

Concentration inequalities and Stein's method give another approach to
nonlinear normal approximation \cite{ChenShao2007,ShaoZhangZhou2016}.
Leung, Shao and Zhang \cite{LeungShaoZhang2024} obtained bounds for
Studentized nonlinear statistics with explicit dependence on the kernel
order.  Leung and Shao \cite{LeungShao2024} proved nonuniform bounds.
Their applications to $U$-statistics assume a finite third moment of the
full kernel.  Leung \cite{Leung2026} also studied incomplete $U$-statistics
with Bernoulli sampling.  In that setting, the error also depends on the
number of sampled terms.

We resolve the two questions above.  First, we remove the logarithmic
factor for every fixed Hoeffding order.  This closes a long-standing conjecture in classical probability theory.
The remainder enters the bound
through the sum of the variances of its orthogonal components.  Second,
we use this estimate to prove the $5/3$ moment bound for Studentized
order-two $U$-statistics.  The two results are linked by a polynomial
approximation to the Studentized statistic.  The linear dependence on the
remainder variance allows us to preserve the classical rate in this step.

\subsection{Main results}\label{sec:main-results}

Let $X_1,\ldots,X_n$ be independent observations with common law $P$ on
a measurable space $\mathcal X$.  Unless otherwise stated, sums over
observation indices range from $1$ to $n$.
For a real random variable $Y$, write $F_Y(x)=\Pp(Y\le x)$.
For a bounded function $H:\R\to\R$, write
$\norm H_\infty=\sup_{x\in\R}|H(x)|$.
Let $\Phi$ be the standard normal distribution function and
$\phi=\Phi'$ its density.  The symbols $C$ and $C_k$ denote positive
constants; $C$ is an absolute constant and $C_k$ depends only on $k$.
Their values may change from line to line.

Fix an integer $k\ge2$ and suppose $n\ge k$.
Let $S=S(X_1,\ldots,X_n)$ be a centered square-integrable symmetric
statistic of Hoeffding order at most $k$.  Its Hoeffding decomposition
separates the first projection, $\sum_{j=1}^n\E(S\mid X_j)$, from
the canonical components of orders two through $k$.
The first projection is the usual linear approximation to $S$.
We state the bound for a centered linear approximation with summands
$L_j=\ell(X_j)$, where $\ell:\mathcal X\to\R$ is measurable and
\begin{equation}
 \E L_1=0,\qquad n\E L_1^2=1,\qquad \E|L_1|^3<\infty.
\end{equation}
This gives the decomposition
\begin{equation}
 S=L+T,\qquad L=\sum_{j=1}^nL_j,
\end{equation}
where $T=S-L$ is the remainder.  Its Hoeffding decomposition is
\begin{equation}\label{eq:T-decomp}
 T=\sum_{r=1}^k T^{[r]},\qquad
 T^{[r]}=\sum_{|A|=r}T_A,\qquad
 T_A=t_{|A|}(X_i:i\in A).
\end{equation}
Here $A$ ranges over subsets of $\{1,\ldots,n\}$, $|A|$ is its
cardinality, and $t_r:\mathcal X^r\to\R$ is a measurable symmetric
kernel with $\E t_r(X_1,\ldots,X_r)^2<\infty$.
Each kernel is canonical in every coordinate:
\begin{equation}
 \E(T_A\mid X_i:i\in A\setminus\{j\})=0,\qquad j\in A.
\end{equation}
The order-one component of $T$ is $\sum_{j=1}^n\E(S\mid X_j)-L$.
This component accounts for the linear corrections used in Studentization.

Define
\begin{equation}
 \beta=n\E|L_1|^3,\qquad \gamma=\Var T.
\end{equation}
Write $T_{1\cdots r}=T_{\{1,\ldots,r\}}$ and define the mixed-moment
coefficients
\begin{equation}\label{eq:kappa-definition}
 \kappa_r=\binom nr\E(L_1\cdots L_rT_{1\cdots r}),
 \qquad 1\le r\le k.
\end{equation}
These expectations are finite by the Cauchy--Schwarz inequality; see
Section~\ref{sec:cf-estimates}.
With $\Phi^{(j)}$ denoting the $j$th derivative of $\Phi$, define
\begin{equation}\label{eq:Nk-definition}
 N_k(x)=\Phi(x)+\sum_{r=1}^k(-1)^{r+1}\kappa_r\Phi^{(r+1)}(x).
\end{equation}
This is the distribution function of a signed measure of total mass one.
In this notation, the bound of Bentkus, Jing and Zhou
\cite[Theorem~1]{BJZ2009} is
\begin{equation}\label{eq:BJZ-bound-intro}
 \norm{F_S-N_k}_\infty
 \le C_k\{\beta+\gamma\log^2(3+\gamma^{-1})\},
\end{equation}
where the second term is interpreted as zero when $\gamma=0$.
Our first result removes the logarithm.

\begin{theorem}\label{thm:main}
Under the preceding assumptions,
there is a constant $C_k<\infty$, depending only on $k$, such that
\begin{equation}\label{eq:main}
 \norm{F_S-N_k}_\infty\le C_k(\beta+\gamma).
\end{equation}
\end{theorem}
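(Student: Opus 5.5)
We follow the smoothing-inequality route of Bentkus, Jing and Zhou and locate where their logarithm enters. Let $f(t)=\E e^{\ii tS}$ and let $g(t)=e^{-t^2/2}\bigl(1+\sum_{r=1}^k\kappa_r(\ii t)^{r+1}\bigr)$ be the Fourier--Stieltjes transform of $N_k$. First we reduce to $\beta+\gamma\le c_k$ for a small $c_k$; otherwise the bound is trivial, since $\norm{N_k}_\infty\le C_k(1+\sum_r|\kappa_r|)$ and $|\kappa_r|\le C_k\gamma^{1/2}$ by Cauchy--Schwarz. Under this reduction, $n\ge c\beta^{-2}$. Instead of Esseen's inequality we use the signed Prawitz smoothing inequality with cutoff $K\asymp 1/\beta$:
\[
F_S(x)-N_k(x)=\frac12+\mathrm{p.v.}\int_{-K}^{K}\frac{e^{-\ii tx}}{2\pi\ii t}\Bigl(1-\tfrac{|t|}{K}\Bigr)f(t)\dd t-N_k(x)+R,
\]
with $|R|\le C/K$. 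The point of using the Prawitz form rather than the Esseen form is that it involves $f$ itself and not $|f-g|$. The error in the low-frequency region can therefore be written as $\E$ of an explicit bounded signed kernel evaluated at $S$, minus the same expression for the signed measure $N_k$. This lets us treat the frequencies in $[-K,K]$ collectively, rather than integrating pointwise bounds on $|f-g|/|t|$. Pointwise integration over $1\le|t|\le K$ is exactly what produces $\log$ factors.

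The main step is to expand $f$ around the linear part. We write $e^{\ii tS}=e^{\ii tL}\bigl(1+\ii tT+\dots\bigr)$, but only up to first order in $T$ for the terms that matter. The first-order term $\ii t\,\E e^{\ii tL}T$ is computed exactly using the Hoeffding structure. Conditioning on the $X_i$, $i\in A$, and using canonicity, only the factor $\prod_{j\in A}(e^{\ii tL_j}-1)$ survives. This yields $\kappa_r(\ii t)^{r+1}e^{-t^2/2}$ plus errors of size $C_k\beta\,|t|^{3}e^{-ct^2}$, for $|t|\le c\beta^{-1}$ (standard characteristic-function estimates, referred to in Section~\ref{sec:cf-estimates}). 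The quadratic remainder $\E e^{\ii tL}\bigl(e^{\ii tT}-1-\ii tT\bigr)$ is where the bound must be linear in $\gamma$. Bounding it by $t^2\gamma$ pointwise is fine for $|t|\lesssim 1$, but for large $t$ it loses powers of $t$.

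For $1\le|t|\le K$ we therefore do not expand. We use a bounded change of measure and a conditional-variance argument. Split the sample into a block $B$ of size $m\asymp n/t^2$ and its complement, and write $S=L_B+(\text{terms independent of }X_B)+T'$, where $T'$ collects the components of $T$ meeting $B$. Then $\E|T'|^2\le C_k(m/n)\gamma\asymp \gamma/t^2$, because each variance component $\Var T^{[r]}$ has mass distributed over sets meeting $B$ in proportion at most $rm/n$. Conditionally on $X_{B^c}$, the factor $\E e^{\ii tL_B}$ already gives $e^{-cm t^2/n}$ decay, so $|f(t)-\E e^{\ii t(L+T-T')}|\le |t|\,\E|T'|\le C_k\gamma^{1/2}$. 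This is still not enough pointwise.

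The submartingale enters to fix this. Define $M_j=\E\bigl(\sum_{A\ni\cdot}\dots\mid X_1,\dots,X_j\bigr)$ as the conditional variance of the remainder given the first $j$ observations. We then aggregate over dyadic bands $|t|\in[2^\ell,2^{\ell+1}]$ using Doob-type control of $M$. In effect, each band contributes $C_k\gamma 2^{-\ell}\cdot 2^{\ell}$ but with geometric decay coming from the Gaussian factor on the block. The sum over $\ell$ is then $C_k\gamma$ instead of $C_k\gamma\log^2$. For large $T$ we use a polynomial truncation of $e^{\ii tT}$ on the event $|T|\le \gamma^{1/2}$, together with Chebyshev, $\Pp(|T|>\varepsilon)\le\gamma/\varepsilon^2$, on its complement. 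The complement contributes at most $C\gamma/\varepsilon^2$ to the Kolmogorov distance via the concentration inequality for $L$.

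We expect the main obstacle to be this last step: proving that the high-frequency part $\int_{1\le|t|\le K}|t|^{-1}|f(t)-g(t)|$, organized through the Prawitz kernel, is $O(\beta+\gamma)$ rather than $O(\gamma\log^2(1/\gamma))$. Our first attempt would be the dyadic-band argument combined with the submartingale maximal inequality sketched above.
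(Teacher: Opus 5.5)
You have the right ingredients (reduction to $\beta+\gamma\le c_k$, a signed smoothing inequality, a frequency-dependent block, a conditional-variance submartingale with Doob's inequality). But the step that actually removes the logarithm is missing, and the mechanism you sketch in its place does not work. With $m\asymp n/t^2$ the block factor only gives $|\vartheta(t)|^m\le e^{-mt^2/(4n)}$, which is a fixed constant. There is no decay in $|t|$, hence no geometric decay across dyadic bands: even a second-order remainder of size $t^2\E|T'|^2\asymp\gamma$ costs order $\gamma$ per band and $\gamma\log K$ in total. Integrable decay requires $m\asymp A_knt^{-2}\log|t|$ with $A_k$ large, so that $|\vartheta(t)|^m\le|t|^{-A_k/8}$. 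But then the pointwise bound $t^2\E|T'|^2\asymp\gamma\log|t|$, integrated against $\dd t/|t|$, reproduces exactly the $\gamma\log^2$ of Bentkus, Jing and Zhou.

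The paper never uses that pointwise bound. It subtracts the first-order term $f_1$ as well, leaving $\E[e^{\ii t(L+T^{(0)})}\Psi_t(U+V)]$. On the event $H_q$ where the running maximum of $Z_q=n\E(\mathcal U_q^2\mid\mathcal F_q)$ stays below a small $\eta$, it uses conditional independence of the pairs $(L_j,U_j)$, $j\le m$, given the outside variables. This writes the $U$-part as $(\vartheta+b+r)^m-\vartheta^m-m\vartheta^{m-1}b$, with $|b|+|r|$ a small multiple of $t^2/n$. The result is $O(\gamma(\log|t|)^2|t|^{-A_k/32})$ in expectation and can be integrated in absolute value. This step needs a uniform, not merely average, bound on the conditional variance of a single block row. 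That is why the submartingale must be that row variance indexed by the growing outside set. Your $M_j$ (a conditional variance of the whole remainder given the first $j$ observations) does not serve this purpose.

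The idea your proposal lacks concerns the complementary event $B_q$, which has probability $O(\gamma)$ by Doob. There the term $\E\{\1_{B_q}e^{\ii tS}\}$ has no decay at all, so any integration in absolute value, dyadic or not, costs $\gamma\log a$. The paper uses that $q(t)=n-m(t)$ is nondecreasing in $|t|$ and that $B_q$ is defined through a running maximum, so these events are nested. Hence, for each sample point, the exceptional frequencies form an annulus $\tau(\omega)\le|t|<a$. The smoothing multiplier has a uniformly bounded oscillatory integral over every such annulus. Applying this pathwise, before taking expectations, bounds the contribution by $C\Pp(B(a-))\le C_k\gamma$. You note that the signed form lets one treat frequencies collectively, but you never use it at the one place where it is indispensable.

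Your closing truncation does not help either. With $\varepsilon=\gamma^{1/2}$, Chebyshev gives only $\Pp(|T|>\varepsilon)\le1$, and any $\varepsilon$ combined with concentration of $L$ gives at best $\gamma/\varepsilon^2+\varepsilon$, i.e.\ order $\gamma^{1/3}$. Finally, your smoothing identity with $|R|\le C/K$ is false pointwise (take $S\equiv0$ and $x=0$). Use instead the one-sided Prawitz bounds, or Feller's inequality $\norm{F-G}_\infty\le2\sup_x|(F-G)*V_K(x)|+C\norm{G'}_\infty/K$ with the Fej\'er kernel $V_K$.
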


The distribution and the kernels may depend on $n$.
The Hoeffding components in \eqref{eq:T-decomp} are orthogonal, so
\begin{equation}
 \gamma=\sum_{r=1}^k\Var(T^{[r]}),\qquad
 \norm{F_S-N_k}_\infty
 \le C_k\left\{\beta+\sum_{r=1}^k\Var(T^{[r]})\right\}.
\end{equation}
Thus the remainder contribution is additive in the variances of its
orthogonal components.
The bound separates the classical third-moment term for $L$ from the
variance contributed by each order of the remainder.

Both terms are of optimal order; see Proposition~\ref{prop:main-sharp}.
In the example establishing optimality of the $\gamma$ term, the remainder
is canonical of order two, all adjustment coefficients vanish, and
$\beta/\gamma\to0$.
In general, the adjustment can be as large as a constant times
$\sqrt\gamma$ \cite[Theorem~2]{BJZ2009}.  Boundedness of the Gaussian
derivatives in \eqref{eq:Nk-definition} gives the standard normal
approximation bound
\begin{equation}\label{eq:ordinary-with-coefficients}
 \norm{F_S-\Phi}_\infty
 \le C_k\left\{\beta+\gamma+\sum_{r=1}^k|\kappa_r|\right\}.
\end{equation}
Hence an error of order $\beta+\gamma$ for the standard normal
approximation also requires control of the adjustment coefficients.

We now apply the variance bound to Studentization.  Expanding the
Studentized statistic gives symmetric remainders of finite Hoeffding
order, possibly with an order-one component.  Theorem~\ref{thm:main}
applies to these polynomial approximations and gives our second result.

Let $h:\mathcal X^2\to\R$ be a measurable symmetric kernel with
$\E|h(X_1,X_2)|<\infty$.  Define
\begin{equation}
 U_n=\binom n2^{-1}\sum_{i<j}h(X_i,X_j),\qquad
 \theta=\E h(X_1,X_2).
\end{equation}
Its first projection $g$ and canonical kernel $\eta$ are
\begin{equation}\label{eq:student-projections}
 \begin{split}
 g(x)&=\int h(x,y)\,P(\dd y)-\theta,\\
 \eta(x,y)&=h(x,y)-\theta-g(x)-g(y).
 \end{split}
\end{equation}
Write $g_i=g(X_i)$ and $\eta_{ij}=\eta(X_i,X_j)$.
Then $h(X_i,X_j)-\theta=g_i+g_j+\eta_{ij}$, $\E g_1=0$, and
$\int\eta(x,y)\,P(\dd y)=0$ for $P$-almost every $x$.
The same identity holds after interchanging the two coordinates.
Assume $0<\sigma_g^2=\E g_1^2<\infty$.
For $n\ge4$, define the jackknife variance estimator by
\begin{equation}
 \widehat\sigma_g^2
 =\frac{n-1}{(n-2)^2}\sum_{i=1}^n
 \left\{\frac1{n-1}\sum_{j\ne i}h(X_i,X_j)-U_n\right\}^2,
\end{equation}
and let $\widehat\sigma_g$ be its nonnegative square root.
The Studentized statistic is
\begin{equation}
 \widehat S_n=\frac{\sqrt n\,(U_n-\theta)}{2\widehat\sigma_g}.
\end{equation}
Set $\widehat S_n=0$ when $\widehat\sigma_g=0$.
The following bound also holds under any other fixed convention for this event.

\begin{theorem}\label{thm:studentized}
For the kernel $h$ and statistic $\widehat S_n$ defined above, there is an
absolute constant $C$ such that, for $n\ge4$,
\begin{equation}\label{eq:studentized-main}
 \norm{F_{\widehat S_n}-\Phi}_\infty
 \le\frac{C}{\sqrt n}\left\{
 \frac{\E|g_1|^3}{\sigma_g^3}
 +\frac{\E|\eta_{12}|^{5/3}}{\sigma_g^{5/3}}\right\}.
\end{equation}
\end{theorem}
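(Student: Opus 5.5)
We reduce Theorem~\ref{thm:studentized} to Theorem~\ref{thm:main} by a polynomial (linearization) approximation of $\widehat S_n$, after truncating the canonical kernel. By scaling take $\sigma_g=1$ and $\theta=0$. Set $\beta_0=n^{-1/2}\E|g_1|^3$ and $\delta=n^{-1/2}\E|\eta_{12}|^{5/3}$. We may assume $\beta_0+\delta\le c$ for a small absolute $c$, since otherwise the bound is trivial.

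\emph{Step 1: truncation.} Write
\[
\eta_{ij}=\eta_{ij}\1\{|\eta_{ij}|\le\sqrt n\}+\eta_{ij}\1\{|\eta_{ij}|>\sqrt n\}
\]
and recenter the bounded part so that it is again canonical. The large part is nonzero for some pair with probability at most $\binom n2\Pp(|\eta_{12}|>\sqrt n)\le n^2 n^{-5/6}\E|\eta_{12}|^{5/3}$. This is too crude, so we instead handle the large part through its first moment. The truncated variable $\eta^*$ satisfies
\[
\E|\eta^*|^2\le n^{1/6}\E|\eta_{12}|^{5/3}\quad\text{and}\quad \E|\eta^*|^3\le n^{2/3}\E|\eta_{12}|^{5/3}.
\]
The tail part $\eta-\eta^*$ has $\E|\eta-\eta^*|\le n^{-1/3}\E|\eta_{12}|^{5/3}$. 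Its contribution to $\sqrt n U_n$ is $n^{-3/2}\sum_{i<j}$, with $L^1$ norm at most $\sqrt n\,n^{-1/3}\E|\eta|^{5/3}=n^{1/6}\E|\eta|^{5/3}$. That alone is not small, so the tail must be treated more carefully. The plan is a level $n^{2/3}$-type split chosen so that the variance $n^{-1}\E(\eta^*)^2\lesssim\delta$, which is exactly the $\gamma$ term in Theorem~\ref{thm:main}. The tail is then controlled by a bounded change of measure: conditioning on the event $E$ that no $|\eta_{ij}|$ exceeds the level, under which the $X_i$ remain i.i.d.\ up to a density factor bounded by $1+O(\delta)$ (the device mentioned in the abstract). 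We also truncate $g$ at $\sqrt n$, which is standard at cost $\beta_0$.

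\emph{Step 2: expansion.} Write
\[
\widehat\sigma_g^2=1+V,\qquad V=\frac1n\sum_i(g_i^2-1)+\frac2n\sum_i g_i\,\frac1{n}\sum_{j\neq i}\eta_{ij}+\frac1n\sum_i\Bigl(\frac1n\sum_j\eta_{ij}\Bigr)^2+\dots
\]
Use the standard inequality $|x(1+v)^{-1/2}-x(1-v/2)|\le C|x|v^2$ on $|v|\le1/2$, together with a bound for $\Pp(|V|>1/2)$. Since $\widehat S_n\ge x$ is equivalent to $\sqrt n U_n/2-x\widehat\sigma_g\ge0$, we linearize in this form instead, avoiding division. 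With $W=\sqrt nU_n/2$ we have
\[
\{\widehat S_n\le x\}=\{W-x\sqrt{1+V}\le0\}.
\]
Replace $\sqrt{1+V}$ by $1+V/2$, at an error controlled by $\Pp(|x||V|^2>\epsilon)$. This gives the symmetric statistic
\[
S_x=W-x(1+V/2),
\]
which has finite Hoeffding order (at most $3$ after expanding the square). Its linear part is $\sum_i\{g_i/\sqrt n-x(g_i^2-1)/(2n)\}$ plus an order-one correction. After standardization, Theorem~\ref{thm:main} gives
\[
|\Pp(S_x\le0)-N_3(\cdot)|\le C(\beta+\gamma).
\]
Here $\beta\lesssim\beta_0$, and $\gamma\lesssim n^{-1}\E(\eta^*)^2$ plus terms involving $|x|\,\E g^4/n$ after truncation, which are again of order $\beta_0$. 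For large $|x|$ we use the Studentized-mean type exponential tail bounds instead, as for Bentkus and G\"otze \cite{BentkusGotzeStudent1996}. We also compute $\kappa_r$ and check that $N_3$ evaluated at the relevant point differs from $\Phi(x)$ by $O(\beta_0+\delta)$. The key point is that the mixed moments with $\eta$ are $O(n^{-1/2}\E|\eta|^{5/3})$ after truncation, by H\"older.

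\emph{Main obstacle.} The hard step is the nonlinear error, namely bounding $\Pp(|x|V^2\gtrsim\beta_0+\delta)$ and the effect of replacing $\sqrt{1+V}$ by $1+V/2$. Only $5/3$ moments of $\eta$ are available, so the quadratic term $\frac1n\sum_i(\frac1n\sum_j\eta_{ij})^2$ cannot be controlled in $L^2$. We would bound it in $L^{5/6}$-type norms, or more simply by Markov's inequality on the truncated version combined with the change of measure to $E$. The linear dependence on $\gamma$ in Theorem~\ref{thm:main}, in place of the $\gamma\log^2$ factor, is what allows the truncation level to balance exactly at exponent $5/3$ without losing an $\varepsilon$.
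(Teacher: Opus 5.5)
Your overall architecture is the right one: truncate, expand the Studentized ratio into a symmetric polynomial of bounded Hoeffding order, and apply Theorem~\ref{thm:main}, exploiting that it is linear in $\gamma$. However, the steps that carry the proof are either wrong or left open.

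\emph{Truncation.} The natural level for $\eta$ is $n^{3/2}$, not $\sqrt n$ or $n^{2/3}$, because each $\eta_{ij}$ enters $\sqrt n\,U_n$ with weight of order $n^{-3/2}$. With $b=\E|\eta_{12}|^{5/3}$, at this level both $\binom n2\Pp(|\eta_{12}|>n^{3/2})\le b/\sqrt n$ and $n^{-1}\E\{\eta_{12}^2\1_{\{|\eta_{12}|\le n^{3/2}\}}\}\le b/\sqrt n$ hold. A plain union bound then suffices, since the truncated and original Studentized statistics coincide off the exceedance event.

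Your device of conditioning on $E=\{\max_{i<j}|\eta_{ij}|\le\tau\}$ fails. The conditional law has density $\1_E/\Pp(E)$ with respect to $P^n$, which is not of product form, so the observations are no longer i.i.d. and Theorem~\ref{thm:main} cannot be applied. Moreover, at $\tau=n^{2/3}$ the union bound gives only $n^{8/9}b$. The paper's ``bounded change of measure'' is a different device, described below.

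You also miss the second truncation, which dissolves what you call the main obstacle. Remove observations with $m(X_i)=\int|\eta(X_i,y)|^{5/3}P(\dd y)>n^{3/2}$, at cost $n\Pp(m(X_1)>n^{3/2})\le b/\sqrt n$. This gives the uniform row bound $\mathop{\rm ess\,sup}_x\int\eta_0(x,y)^2P(\dd y)\le Cn^2$. Hilbert-space Rosenthal (Lemma~\ref{lem:student-kernel-mom}) then gives $\E(\sum_ir_i^2)^p\le C_p\rho$ for the row sums, so the quadratic denominator term is harmless and no $L^{5/6}$ argument is needed.

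\emph{Linearization and adjustment.} Replacing $\sqrt{1+V}$ by $1+V/2$ ``at an error controlled by $\Pp(|x|V^2>\epsilon)$'' also costs $\Pp(|S_x|\le\epsilon)$, which forces $\epsilon\lesssim\rho$. But the moments of $V$ are only of order $\rho$. A single $g_i^2\asymp n\sqrt\rho$ already gives $V^2\gtrsim\rho$, and for suitable $g$ with $\E|g|^3<\infty$ this has probability much larger than $\rho$.

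Keeping $x$ inside the polynomial also makes the remainder variance of order $(1+x^2)\rho$. Theorem~\ref{thm:main} is a uniform bound, so this yields only $\rho\log(1/\rho)$ on the range $|x|\lesssim\sqrt{\log(1/\rho)}$ that must be covered. There $1-\Phi(x)\gg\rho$, so no tail bound rescues it, and Bentkus--G\"otze-type exponential bounds are not available under a $5/3$ moment anyway.

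The paper divides instead of multiplying. For $|v|\le1/2$, $1-v/2\le(1+v)^{-1/2}\le1-v/2+v^2$ gives exact inclusions trapping $W/\sqrt{1+\Delta}$ between $Y_0=W(1-\Delta/2)$ and $Y_1=W(1-\Delta/2+\Delta^2)$ (Lemma~\ref{lem:student-sandwich}). These are $x$-free polynomials of order at most ten, and the cost is only $\Pp(|\Delta|>1/2)\le4\E\Delta^2$.

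Finally, the adjustment coefficients are not a side check. Lemma~\ref{lem:BJZ-inputs} gives only $|\kappa_r|\le\sqrt{\gamma_r}\asymp\sqrt\rho$, whereas you need $O(\rho)$ for every coefficient of a remainder containing $W\Delta$ and $W\Delta^2$. H\"older on $\E(g_1g_2\eta_{12})$ covers only one such term, and it gives $a^{2/5}b^{3/5}/\sqrt n$, not $b/\sqrt n$. The missing tool is Corollary~\ref{cor:finite-order}. Under $P_t(\dd x)=(1+tu(x))P(\dd x)$, $\E_tR$ is a polynomial in $t$ whose coefficients are exactly the $\kappa_s$, so $\sup_{|t|\le t_0}|\E_tR_r|=O(\rho)$ controls them all. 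That bound reduces to the signed expectations $\E_tV$, $\E_t(U\Delta_1)$, $\E_t(U\Delta_2)$ and weighted absolute moments, uniformly in $t$ (Lemma~\ref{lem:student-tilted}).
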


This establishes the endpoint conjectured in
\cite[Theorem~4 and the preceding discussion]{BJZ2009}.
Proposition~\ref{prop:student-sharp} shows that the rate and both moment
exponents are optimal for bounds uniform over the distribution and kernel.
The examples use distributions and kernels that depend on $n$.

\subsection{Notation, proof strategy and organization}

We use the notation of Section~\ref{sec:main-results} throughout.
Within expectations, omitted arguments of one- and two-variable kernels
are taken to be $X_1$ and $(X_1,X_2)$, respectively, under the indicated law.
For $r\ge1$, let $P^r=P^{\otimes r}$ be the $r$-fold product law.
For kernels on $\mathcal X^r$, $\norm{\cdot}_\infty$ denotes the essential
supremum with respect to $P^r$.  The symbol $\1_A$ denotes the indicator
of an event or measurable set $A$.  Subscripts on constants indicate
dependence on the listed fixed parameters.
We write $a_n=O(b_n)$ if $|a_n|\le Cb_n$ for all sufficiently large $n$,
$a_n=o(b_n)$ if $a_n/b_n\to0$, and $a_n\sim b_n$ if $a_n/b_n\to1$,
with $b_n>0$.  For positive sequences, $a_n\asymp b_n$ means that
$a_n/b_n$ is bounded above and below by positive constants.
The same notation is used for functions in a specified limiting range.

For the component variances, define
\begin{equation}
 \gamma_r=\binom nr\E T_{1\cdots r}^2=\Var(T^{[r]}),
 \qquad \gamma=\sum_{r=1}^k\gamma_r.
\end{equation}
Here $T_A$ denotes a single kernel and $T^{[r]}$ the sum of these kernels
over all index sets of size $r$.  The mixed moments of $L$ and $T$ enter
the approximation through the coefficients \eqref{eq:kappa-definition}.

We first explain how to remove the logarithm.  Let $t$ denote the
Fourier variable and let $m$ be the number of observations in a block.
The expansion in \cite{BJZ2009} has a Taylor remainder bounded by
\begin{equation}\label{eq:old-remainder-intro}
 C_k\gamma t^2m/n.
\end{equation}
At large frequencies, the choice $m\asymp nt^{-2}\log|t|$ gives the
required characteristic-function decay.  Integrating the absolute value of
\eqref{eq:old-remainder-intro} against $\dd t/|t|$ then produces a
squared logarithm.  We instead use the signed smoothing inequality of
Prawitz \cite{Prawitz1972}, in the form stated by Pinelis
\cite{Pinelis2013}.  The running maximum of a conditional-variance
submartingale gives nested exceptional events with probability $O(\gamma)$.
For each sample point, the exceptional frequency set is empty or, up to
endpoints, has the form $r<|t|<a$, with inner radius $r$ and outer radius $a$.
The smoothing multiplier has a uniformly bounded oscillatory integral over
such sets.  Applying this bound before taking expectations controls the
exceptional term with phase $e^{\ii tS}$; the other exceptional terms are
integrated in absolute value.  On the complementary event, conditional
independence gives the required decay.  Terms involving at least two
block indices have smaller variance and can also be integrated in absolute value.

For Studentization, normalize to $\sigma_g=1$ and define
$\rho=n^{-1/2}(\E|g_1|^3+\E|\eta_{12}|^{5/3})$.
Truncation affects the statistic only on an event of probability $O(\rho)$.
A cutoff on the conditional $5/3$ moment of the canonical kernel gives
uniform row second-moment bounds.  Outside a further event of probability
$O(\rho)$, the truncated ratio lies between two polynomials of Hoeffding
order at most ten.  For each polynomial, the third-moment term and the
remainder variance are $O(\rho)$.  Rosenthal's inequality controls the
truncated pair sum and the vector of its row sums.

It remains to control the adjustment coefficients in the Studentization argument.  We do this through signed mixed moments.
The adjustment coefficients require a bound on signed mixed moments.
Let $u$ be a linear summand with $\E u(X_1)=0$, $n\E u(X_1)^2=1$, and
$\norm u_\infty\le K$, where $K\ge1$ is fixed.  Consider
$P_t(\dd x)=(1+t u(x))P(\dd x)$ for $|t|\le(2K)^{-1}$, and write
$\E_t$ for expectation under the corresponding product law.  For a
symmetric remainder $R$ of fixed Hoeffding order, $\E_tR$ is a polynomial
in $t$.  Its constant coefficient is $\E R$, and its other coefficients
are the adjustment coefficients.  A uniform bound on $|\E_tR|$ bounds
them all; see Corollary~\ref{cor:finite-order}.  For the two polynomial
approximations, these bounds are $O(\rho)$, so
\eqref{eq:ordinary-with-coefficients} gives the desired rate.
Using \eqref{eq:BJZ-bound-intro} with the same estimates would instead
give $O\{\rho\log^2(3+\rho^{-1})\}$.

Section~\ref{sec:prelim} gives the characteristic-function estimates,
the signed smoothing inequality, and the finite-order consequence used
for Studentization.  Section~\ref{sec:proof} proves
Theorem~\ref{thm:main} and its optimality.  Section~\ref{sec:studentized}
proves Theorem~\ref{thm:studentized} and the corresponding optimality
statements.

\section{Preliminary estimates}\label{sec:prelim}

We collect the characteristic-function and smoothing estimates, together
with a consequence of the main theorem that will be used for Studentization.

\subsection{Characteristic functions and coefficient bounds}\label{sec:cf-estimates}

We work under the assumptions of Theorem~\ref{thm:main}.
Canonicality gives the orthogonality relation
\[
 \E(T_AT_B)=0\qquad(A\ne B).
\]
Canonicality also gives
\begin{equation}\label{eq:kappa-with-T}
  \E(L_1\cdots L_rT)
  =\E(L_1\cdots L_rT_{\{1,\dots,r\}}).
\end{equation}
To verify \eqref{eq:kappa-with-T}, consider a component $T_A$ of $T$.  If
$A$ contains an index outside $\{1,\dots,r\}$, conditioning on all variables
except the one with that index gives a zero contribution by canonicality.
If $A\subsetneq\{1,\dots,r\}$, choose $j\in\{1,\dots,r\}\setminus A$;
then the contribution is zero because $L_j$ is centered and independent of
the remaining factors.  Thus only $A=\{1,\dots,r\}$ contributes.

The expectation in \eqref{eq:kappa-definition} is finite by the Cauchy--Schwarz
inequality and the independence of $L_1,\dots,L_r$:
\[
 \E|L_1\cdots L_rT_{\{1,\dots,r\}}|
 \le \{\E(L_1^2\cdots L_r^2)\}^{1/2}
      \{\E T_{\{1,\dots,r\}}^2\}^{1/2}
 =n^{-r/2}\{\E T_{\{1,\dots,r\}}^2\}^{1/2}.
\]
The coefficient normalization in \eqref{eq:Nk-definition} agrees with
the characteristic-function expansion in \cite{BJZ2009}.
Let $\ii=\sqrt{-1}$.  Write
\begin{equation}
  f(t)=\E e^{\ii tS},
  \qquad
  g_k(t)=\int_{\R}e^{\ii tx}\,\dd N_k(x).
\end{equation}
Since $\Phi^{(r+2)}=\phi^{(r+1)}$, where $\phi=\Phi'$, repeated integration
by parts gives
\begin{equation}\label{eq:g-explicit}
  g_k(t)=e^{-t^2/2}\left\{1+\sum_{r=1}^k\kappa_r(\ii t)^{r+1}\right\}.
\end{equation}
In particular, if $|\kappa_r|\le1$ for all $r$, then
\begin{equation}\label{eq:g-L1}
  \int_{\R}|g_k(t)|\,\dd t\le C_k.
\end{equation}
We use the following estimates from \cite{BJZ2009}.

\begin{lemma}\label{lem:BJZ-inputs}
There are constants depending only on $k$ such that the following hold.
\begin{enumerate}[label=\textup{(\roman*)}]
\item\label{item:basic}
  $1\le\sqrt n\,\beta$, and the adjustment coefficients satisfy
  $|\kappa_r|^2\le\gamma_r\le\gamma$.
\item\label{item:low}
  For every $t\in\R$,
  \begin{equation}\label{eq:BJZ-low}
    |f(t)-g_k(t)|
    \le C_k\bigl(|t|+|t|^{k+4}\bigr)(\beta+\gamma).
  \end{equation}
\item\label{item:prop2}
  Let $M=\{1,\dots,m\}$, let $T^{(0)}$ be the sum of the kernels avoiding
  $M$, and let $T^{(m)}=T-T^{(0)}$.  Define
  \begin{equation}\label{eq:f0-f1-f4}
    f_0(t)=\E e^{\ii t(L+T^{(0)})},\qquad
    f_1(t)=\ii t\,\E\{e^{\ii t(L+T^{(0)})}T^{(m)}\},
  \end{equation}
  and
  \begin{equation}\label{eq:f4}
    f_4(t)=\E e^{\ii tL}+\ii t\,\E(e^{\ii tL}T).
  \end{equation}
  If $m$ is even and $2k\le m\le n$, then
  \begin{equation}\label{eq:BJZ-prop2}
    |f_0(t)+f_1(t)-f_4(t)|
    \le C_k\gamma t^2|\vartheta(t)|^{m/2},
  \end{equation}
  where $\vartheta(t)=\E e^{\ii tL_1}$.
\item\label{item:prop3}
  Let $\rho(t)=e^{-t^2/(2n)}$.  If $\beta\le1$, $\gamma\le1$,
  $n\ge4k$, $1\le m\le n/4$, and $|t|\ge1$, then
  \begin{equation}\label{eq:BJZ-prop3}
    |f_4(t)-g_k(t)|
    \le C_k(\beta+\gamma)|t|^{k+4}
       \bigl(|\vartheta(t)|^m+\rho(t)^m\bigr).
  \end{equation}
\end{enumerate}
\end{lemma}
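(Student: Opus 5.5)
Part \ref{item:basic} is elementary and I would prove it directly. Parts \ref{item:low}--\ref{item:prop3} are, up to notation, Propositions~1--3 of \cite{BJZ2009}. For those I would verify that the normalizations agree and sketch why each estimate holds.

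\emph{Part \ref{item:basic}.} Lyapunov's inequality gives $n^{-1}=\E L_1^2\le(\E|L_1|^3)^{2/3}$. Hence $\E|L_1|^3\ge n^{-3/2}$, so $\sqrt n\,\beta\ge1$. For the coefficients I would use the Cauchy--Schwarz bound displayed just before the lemma, $|\E(L_1\cdots L_rT_{1\cdots r})|\le n^{-r/2}(\E T_{1\cdots r}^2)^{1/2}$. Squaring gives
\[
|\kappa_r|^2\le\binom nr^2n^{-r}\E T_{1\cdots r}^2\le\binom nr\E T_{1\cdots r}^2=\gamma_r ,
\]
because $\binom nr\le n^r/r!\le n^r$. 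Finally $\gamma_r\le\gamma$ because the Hoeffding components are orthogonal.

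\emph{Part \ref{item:low}.} I would write $e^{\ii tT}=1+\ii tT+O(t^2T^2)$. The quadratic term contributes at most $t^2\gamma$, so $f=f_4+O(t^2\gamma)$. For $\E e^{\ii tL}$, the classical third-moment expansion gives $|\E e^{\ii tL}-e^{-t^2/2}|\le C|t|^3\beta$. For $\ii t\,\E(e^{\ii tL}T)$, I would expand the sum of components $T_A$ with $|A|=r$. Factoring out $\vartheta(t)^{n-r}$ and Taylor expanding $e^{\ii tL_j}$ for $j\in A$, canonicality kills every term except the one containing $L_1\cdots L_r$. That term produces $\kappa_r(\ii t)^{r}e^{-t^2/2}$ plus errors bounded by $C_k(|t|+|t|^{k+4})(\beta+\gamma)$, after using $|\kappa_r|\le\sqrt\gamma$ and $\sqrt\gamma\,\beta\le\beta+\gamma$. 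Checking that this leading term reproduces exactly \eqref{eq:g-explicit} is the normalization match with \cite{BJZ2009}.

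\emph{Parts \ref{item:prop2} and \ref{item:prop3}.}
\begin{itemize}
\item For \ref{item:prop2}, I would Taylor expand $e^{\ii tT^{(m)}}$ to first order about $L+T^{(0)}$. The quadratic remainder is handled by conditioning on the variables outside $M$. Each kernel in $T^{(m)}$ involves at most $k\le m/2$ indices of $M$, so in every product of two kernels at least $m/2$ indices of $M$ remain free. Their factors $e^{\ii tL_j}$ then integrate out to give $|\vartheta(t)|^{m/2}$. Replacing $T^{(0)}$ by $0$ in the phase of $f_0$ and $f_1$ is done by the same device, producing $\gamma t^2|\vartheta|^{m/2}$.
\item For \ref{item:prop3}, I would compute $\E(e^{\ii tL}T_A)=\vartheta^{n-|A|}\E(e^{\ii t\sum_{j\in A}L_j}T_A)$. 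I would then compare $\vartheta^{n-r}$ and $\vartheta^n$ with $e^{-t^2/2}$ at $|t|\ge1$, using the Berry--Esseen-type inequality $|\vartheta^{p}-\rho^{p}|\le Cp|t|^3\E|L_1|^3(|\vartheta|+\rho)^{p-1}$ on the range $p\ge m$.
\end{itemize}

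The main obstacle is purely bookkeeping: making the constants, signs and powers of $\ii t$ in $\kappa_r$ agree with \cite{BJZ2009}, and checking that their hypotheses ($m$ even, $2k\le m\le n/4$, $\beta,\gamma\le1$) match the ones stated here. I would therefore cite \cite{BJZ2009} for the detailed estimates once the dictionary is established.
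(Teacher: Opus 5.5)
Your plan is the paper's plan. Item (i) is proved exactly as there: Lyapunov, then the Cauchy--Schwarz bound with $\binom nr\le n^r$. Items (ii)--(iv) are imported from \cite{BJZ2009}; note that the paper cites Corollary~1 there for (ii), and Propositions~2 and~3 for (iii) and (iv), not Propositions~1--3. Your sketches contain two small slips.
\begin{enumerate}
\item In (ii), the inequality $\sqrt\gamma\,\beta\le\beta+\gamma$ is false when $\gamma>1$ and $\beta$ is large. First dispose of the case $\beta+\gamma\ge1$ using $|f-1|\le|t|(1+\sqrt\gamma)$ and $|1-g_k|\le t^2/2+\sqrt\gamma\sum_r|t|^{r+1}$.
\item In (iv), the damping factor must be $\max\{|\vartheta|,\rho\}^{p-1}\le|\vartheta|^{p-1}+\rho^{p-1}$. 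The factor $(|\vartheta|+\rho)^{p-1}$ can be as large as $2^{p-1}$ and gives no damping.
\end{enumerate}

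Your sketch of (iii), however, contains a step that would fail, and it concerns exactly the term this paper is about. Since $f_0+f_1$ is the first-order expansion of $f$ in $T^{(m)}$, the ``quadratic remainder'' you propose to damp is $f-f_0-f_1=\E\{e^{\ii t(L+T^{(0)})}\Psi_t(T^{(m)})\}$, with $\Psi_t(w)=e^{\ii tw}-1-\ii tw$. This term does not occur in (iii) at all. It also cannot be damped by your device. $\Psi_t(T^{(m)})$ is not a sum of products of kernels; it depends jointly on every $X_j$, $j\in M$. So no block variable is free, and the only available bound $|\Psi_t(w)|\le t^2w^2/2$ gives $C_k\gamma t^2m/n$ with no damping. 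That is \eqref{eq:old-remainder-intro}, the source of the logarithm, which Section~\ref{sec:proof} handles with the submartingale and the signed Prawitz integral. Moreover, two kernels can occupy $2k$ block indices, so only $m-2k\ge0$ indices are guaranteed free, not $m/2$.

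What (iii) actually needs is the exact identity
\[
 f_0+f_1-f_4=\vartheta^m\E\{e^{\ii tL_{M^c}}\Psi_t(T^{(0)})\}+\ii t\,\E\{e^{\ii tL}(e^{\ii tT^{(0)}}-1)T^{(m)}\}.
\]
The first term is at most $t^2\gamma|\vartheta|^m/2$. In the second term only one kernel meets $M$, so at least $m-k\ge m/2$ block indices are free. Bounding the terms one at a time would lose a factor $\binom mb$ when summing over $A\cap M$. Instead, expand $e^{\ii tL_M}=\prod_{j\in M}(\vartheta+\theta_j)$ with $\theta_j=e^{\ii tL_j}-\vartheta$, and apply Cauchy--Schwarz across the conditionally orthogonal pieces of $T^{(m)}$. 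Since $\binom mb\le C_k\binom{m/2}b$ for $b\le k\le m/2$ (this is where $m$ even enters), comparison with binomial probabilities gives
\[
 \bigl|\E(e^{\ii tL_M}T^{(m)}\mid X_i:i\notin M)\bigr|^2\le C_k|\vartheta|^m\,\E\{(T^{(m)})^2\mid X_i:i\notin M\}.
\]
Then $|e^{\ii tT^{(0)}}-1|\le|t||T^{(0)}|$ and Cauchy--Schwarz give the bound $C_k\gamma t^2|\vartheta|^{m/2}$.
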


\begin{proof}
For item \ref{item:basic}, Lyapunov's inequality and
$n\E L_1^2=1$ give
\[
  1=(n\E L_1^2)^{3/2}
  \le n^{3/2}\E|L_1|^3=\sqrt n\,\beta.
\]
Moreover, by \eqref{eq:kappa-definition} and the Cauchy--Schwarz estimate
above,
\begin{align*}
 |\kappa_r|^2
 &\le \binom nr^2n^{-r}\E T_{\{1,\dots,r\}}^2=\frac{\binom nr}{n^r}\gamma_r
 \le\gamma_r.
\end{align*}
Items \ref{item:low}, \ref{item:prop2}, and \ref{item:prop3} are,
respectively, Corollary~1, Proposition~2, and Proposition~3 of
\cite{BJZ2009}.  In item \ref{item:prop3}, the polynomial is \eqref{eq:g-explicit},
as obtained in the proof of their Proposition~3.
\end{proof}

We use the following local Gaussian decay estimate for the characteristic
function of a linear summand.

\begin{lemma}\label{lem:theta}
Let $\vartheta(t)=\E e^{\ii tL_1}$.  Then
\begin{equation}\label{eq:theta-bound}
  |\vartheta(t)|\le e^{-t^2/(4n)}
  \qquad\text{whenever } |t|\beta\le3/8.
\end{equation}
\end{lemma}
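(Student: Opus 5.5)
We follow the classical third-order Taylor argument for a single characteristic function. Write $\sigma^2=\E L_1^2=1/n$ and $\beta_1=\E|L_1|^3=\beta/n$. Since $\E L_1=0$, expanding $e^{\ii tx}$ to second order with the standard remainder $|e^{\ii y}-1-\ii y+y^2/2|\le|y|^3/6$ gives
\[
 \vartheta(t)=1-\frac{t^2}{2n}+\varepsilon(t),\qquad |\varepsilon(t)|\le\frac{|t|^3\beta_1}{6}=\frac{|t|^3\beta}{6n}.
\]
Under the hypothesis $|t|\beta\le3/8$ we get $|\varepsilon(t)|\le \frac{t^2}{n}\cdot\frac{|t|\beta}{6}\le \frac{t^2}{16n}$. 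By Lemma~\ref{lem:BJZ-inputs}\ref{item:basic}, $\sqrt n\,\beta\ge1$, so $|t|\le 3/(8\beta)\le 3\sqrt n/8$ and hence $t^2/(2n)\le 9/128<1$. The factor $1-t^2/(2n)$ is therefore positive, and the triangle inequality gives
\[
 |\vartheta(t)|\le 1-\frac{t^2}{2n}+\frac{t^2}{16n}=1-\frac{7t^2}{16n}\le \exp\Bigl(-\frac{7t^2}{16n}\Bigr)\le e^{-t^2/(4n)},
\]
using $1-x\le e^{-x}$ and $7/16>1/4$.

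The order of steps is: (1) the second-order Taylor expansion with the cubic remainder; (2) the use of $|t|\beta\le 3/8$ to bound the remainder by a small fraction of $t^2/n$; (3) the Lyapunov inequality $1\le\sqrt n\,\beta$ from Lemma~\ref{lem:BJZ-inputs}\ref{item:basic}, which keeps $1-t^2/(2n)$ nonnegative; (4) the elementary exponential bound.

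The only point needing care is step (3). Without it, the leading term $1-t^2/(2n)$ could be negative, and then the triangle-inequality bound would fail. The constant $3/8$ leaves ample room: any threshold $c$ with $c/6<1/4$ and $c^2<2$ would work, so no delicate estimate is expected.
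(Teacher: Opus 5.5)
Your proof is correct, and it reaches the bound by a slightly different route from the paper's. The paper symmetrizes. With $Y=L_1-L_1'$ it writes $|\vartheta(t)|^2=\E\cos(tY)$ and uses the one-sided inequality $\cos u\le 1-u^2/2+|u|^3/6$ together with $\E|Y|^3\le 8\beta/n$. This gives $|\vartheta(t)|^2\le 1-t^2/(2n)\le e^{-t^2/(2n)}$, and the paper then takes square roots.

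You instead expand $\vartheta(t)$ itself about $1-t^2/(2n)$ and apply the triangle inequality. The two routes trade off as follows.

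\emph{The paper's route.} It bounds a real quantity from above, so no absolute value of the leading term ever appears. The Lyapunov bound $\sqrt n\,\beta\ge1$ therefore plays no essential role there, since $1-x\le e^{-x}$ holds for all real $x$. The cost is the factor $8$ in the third moment of $Y$. That factor is why the threshold $3/8$ produces exactly the exponent $t^2/(4n)$.

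\emph{Your route.} It avoids the symmetrization loss and yields the sharper bound $e^{-7t^2/(16n)}$. In exchange, it genuinely needs $1-t^2/(2n)\ge0$ to drop the absolute value. You correctly identify this requirement and obtain it from $\sqrt n\,\beta\ge1$.
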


\begin{proof}
Let $L_1'$ be an independent copy of $L_1$ and let $Y=L_1-L_1'$.  Then
\[
  |\vartheta(t)|^2=\E\cos(tY),\qquad
  \E Y^2=2/n,
\]
and $|x-y|^3\le4(|x|^3+|y|^3)$ gives
\[
  \E|Y|^3\le8\E|L_1|^3=8\beta/n.
\]
The Taylor inequality
$\cos u\le1-u^2/2+|u|^3/6$ therefore yields
\[
  |\vartheta(t)|^2
  \le1-\frac{t^2}{n}+\frac{4|t|^3\beta}{3n}.
\]
If $|t|\beta\le3/8$, the last term is at most $t^2/(2n)$, and hence
\[
  |\vartheta(t)|^2\le1-\frac{t^2}{2n}
  \le e^{-t^2/(2n)}.
\]
Moreover, \Cref{lem:BJZ-inputs}\ref{item:basic} gives
$|t|\le(3/8)\beta^{-1}\le(3/8)\sqrt n$, so
$t^2/(2n)\le9/128$ and the intermediate upper bound is positive.  Taking
square roots proves \eqref{eq:theta-bound}.
\end{proof}

\subsection{A signed Prawitz smoothing inequality}
The classical Esseen inequality \cite[Chapter~II]{Esseen1945} takes the absolute value of the Fourier
remainder before integration.  The cancellation used below requires a
one-sided smoothing formula in which the Fourier integral remains signed.
We use the bounding filter of Prawitz.

For $|u|<1$, define
\begin{equation}\label{eq:Prawitz-filter}
  \mathcal M(u)=\bigl[(1-|u|)\pi u\cot(\pi u)+|u|\bigr]
       -\ii(1-|u|)\pi u,
\end{equation}
with $\mathcal M(0)=1$, and set $\mathcal M(u)=0$ for $|u|\ge1$.  For $a>0$ and
$\sigma\in\{-1,1\}$, let
\begin{equation}
  \Lambda_{\sigma,a}(t)
  =\frac{\ii}{2\pi t}\mathcal M(\sigma t/a)\1_{\{0<|t|<a\}},\qquad t\ne0,
\end{equation}
and set $\Lambda_{\sigma,a}(0)=0$.
All integrals through the origin are understood as symmetric principal values,
denoted by $\operatorname{p.v.}$.
The next lemma gives the smoothing bound and a uniform estimate
over symmetric frequency intervals.

\begin{lemma}\label{lem:smoothing}
Let $F$ be a distribution function with Fourier--Stieltjes transform $f$.
Let $N_k$ and $g_k$ be given by \eqref{eq:Nk-definition} and
\eqref{eq:g-explicit}, and suppose that $|\kappa_r|\le1$ for
$1\le r\le k$.  Assume also that
$(f(t)-g_k(t))/t$ is locally integrable at the origin.  Then, for every
$a\ge1$,
\begin{equation}\label{eq:signed-smoothing}
  \norm{F-N_k}_\infty
  \le \frac{C_k}{a}
  +C\max_{\sigma=\pm1}\sup_{x\in\R}
      \left|\operatorname{p.v.}\int_{-a}^a
        e^{-\ii tx}\Lambda_{\sigma,a}(t)\{f(t)-g_k(t)\}\,\dd t\right|.
\end{equation}
Moreover,
\begin{equation}\label{eq:Lambda-size}
  |\Lambda_{\sigma,a}(t)|\le\frac C{|t|},
  \qquad
  |t\Lambda_{\sigma,a}(t)|\le C,
\end{equation}
and
\begin{equation}\label{eq:Lambda-osc}
  \sup_{z\in\R}\sup_{0\le r<a}
  \left|\int_{r<|t|<a}e^{\ii tz}\Lambda_{\sigma,a}(t)\,\dd t\right|
  \le C.
\end{equation}
\end{lemma}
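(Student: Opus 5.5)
The plan is to derive \eqref{eq:signed-smoothing} from Prawitz's one-sided smoothing inequality, in the form given by Pinelis \cite{Pinelis2013}, and then check the two kernel estimates directly from the explicit form of $\mathcal M$.

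\textbf{Step 1: the one-sided inequality.} Prawitz's inequality states, for a distribution function $F$ with transform $f$ and any $a>0$, that
\[
  F(x)\le\frac12+\operatorname{p.v.}\int_{-a}^a e^{-\ii tx}\Lambda_{1,a}(t)f(t)\,\dd t,
\]
together with the matching lower bound using $\Lambda_{-1,a}$. The sign conventions must be aligned with Pinelis's normalization of the filter; this is where I expect the most care to be needed.

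\textbf{Step 2: subtracting $N_k$.} The function $N_k$ is not a distribution function, so the inequality cannot be applied to it directly. Instead I write
\[
  \tfrac12+\operatorname{p.v.}\int e^{-\ii tx}\Lambda_{\sigma,a}(t)g_k(t)\,\dd t-N_k(x)
  =\int K_{\sigma,a}(x-y)\,\dd N_k(y)-N_k(x).
\]
Here $K_{\sigma,a}$ is the signed smoothing kernel whose Fourier transform is the filter. Since $N_k$ has density $\phi+\sum_r\pm\kappa_r\phi^{(r+1)}$, which is smooth with bounded derivative ($|\kappa_r|\le1$), and since the kernel has mass concentrated at scale $1/a$ with the right first-order behaviour, this difference is $O(C_k/a)$.

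Concretely, the Fourier form of the difference is $\operatorname{p.v.}\int e^{-\ii tx}\Lambda_{\sigma,a}g_k-\{N_k(x)-\tfrac12\}$. Inverting $N_k(x)-\tfrac12=\operatorname{p.v.}\int e^{-\ii tx}\,\frac{\ii}{2\pi t}g_k(t)\,\dd t$, the difference becomes
\[
  \frac{\ii}{2\pi}\int\frac{\mathcal M(\sigma t/a)\1_{|t|<a}-1}{t}\,g_k(t)\,\dd t .
\]
The key facts are that $|\mathcal M(u)-1|\le C|u|$ for $|u|<1$, which follows from expanding $\pi u\cot\pi u=1-O(u^2)$, and that for $|t|\ge a$ the integrand is $|g_k(t)|/|t|$. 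These bound the difference by
\[
  C a^{-1}\int|g_k|+\int_{|t|\ge a}|g_k|/|t|\le C_k/a,
\]
using \eqref{eq:g-L1}. Subtracting the two sides, upper and lower, gives \eqref{eq:signed-smoothing}. The integrability assumption on $(f-g_k)/t$ makes the principal value of the difference an ordinary integral near $0$.

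\textbf{Step 3: the size bounds \eqref{eq:Lambda-size}.} These follow from boundedness of $\mathcal M$ on $(-1,1)$. The only issue is near $|u|\to1$, where $(1-|u|)\pi u\cot\pi u$ stays bounded because $(1-|u|)\cot\pi u\to\mp1/\pi$.

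\textbf{Step 4: the oscillatory bound \eqref{eq:Lambda-osc}, the main obstacle.} I split $\mathcal M(\sigma t/a)=1+(\mathcal M(\sigma t/a)-1)$. The constant part gives $\frac{\ii}{2\pi}\int_{r<|t|<a}e^{\ii tz}\,\dd t/t$, which equals $-\frac1\pi\int_r^a\sin(tz)\,\dd t/t$. This is uniformly bounded because the sine integral $\operatorname{Si}$ is bounded. The remainder has kernel $(\mathcal M(\sigma t/a)-1)/t$, which is bounded in absolute value by $C/a$ on $|t|<a$. Its integral over a set of length at most $2a$ is therefore at most $C$, uniformly in $z$ and $r$.
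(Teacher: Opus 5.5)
Your proposal is correct, and Steps 1--3 follow the paper's argument. That argument is Prawitz's one-sided bound as normalized by Pinelis, the inversion formula $N_k=\tfrac12+\mathcal G(g_k)$, and the estimate $|\mathcal M(u)-1|\le C\min\{|u|,1\}$ combined with \eqref{eq:g-L1} to produce the $C_k/a$ term. Your opening remark about a real-space kernel ``concentrated at scale $1/a$'' is only a heuristic. The Fourier computation you then give is the actual proof, and it matches the paper's.

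You take a genuinely different route for the oscillatory bound \eqref{eq:Lambda-osc}. The paper splits $\mathcal M=\mathcal M_1+\ii\mathcal M_2$ and bounds the odd part $\mathcal M_2(t/a)/t$ pointwise by $C/a$. For the even part it writes the annular integral as $\int_r^a\mathcal M_1(t/a)\sin(tz)\,\dd t/t$ and integrates by parts against the uniformly bounded primitive of $\sin(tz)/t$. This needs $\mathcal M_1$ to have bounded variation on $[-1,1]$, which the paper takes from Pinelis.

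You instead subtract the constant $1$ from the whole filter. The leading piece is the truncated Dirichlet integral $-\pi^{-1}\int_r^a\sin(tz)\,\dd t/t$, which is bounded through $\operatorname{Si}$. The correction kernel $(\mathcal M(\sigma t/a)-1)/t$ is bounded by $C/a$ on $|t|<a$, by the same Lipschitz-at-zero estimate already used in the smoothing step. Its integral over a set of length at most $2a$ is therefore $O(1)$.

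Your route is shorter because it reuses a fact needed anyway and avoids the bounded-variation property of $\mathcal M_1$. The paper's route does not use $\mathcal M_1(u)-1=O(|u|)$ in this step, so it would cover any bounded even real part of bounded variation. That generality is not needed here, since the Lipschitz estimate is required for the $C_k/a$ term regardless. For $r=0$, read the leading piece as a symmetric principal value, as the paper does; your correction term is absolutely integrable, so nothing else changes.
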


\begin{proof}
For a function $\psi$ for which the principal value exists, write
\[
  \mathcal G(\psi)(x)=\frac{\ii}{2\pi}\operatorname{p.v.}
       \int_{\R}e^{-\ii tx}\frac{\psi(t)}t\,\dd t.
\]
The Prawitz bounding-filter theorem states that, for every distribution
function $F$, every $a>0$, and every $x\in\R$, with
$F(x-)=\lim_{y\uparrow x}F(y)$ and $F(x+)=F(x)$,
\begin{equation}\label{eq:Prawitz-one-sided}
 \frac12+\mathcal G\{\mathcal M(-\cdot/a)f\}(x)
 \le F(x-)
 \le F(x+)
 \le \frac12+\mathcal G\{\mathcal M(\cdot/a)f\}(x).
\end{equation}
This is the one-sided smoothing inequality of Prawitz
\cite{Prawitz1972}. The filter \eqref{eq:Prawitz-filter} and the
normalization in \eqref{eq:Prawitz-one-sided} are given in
\cite[Section~2]{Pinelis2013}.

The signed measure $\dd N_k$ has a smooth density obtained by
differentiating \eqref{eq:Nk-definition}, and its Fourier transform is the
integrable function \eqref{eq:g-explicit}.  Fourier inversion therefore gives
\begin{equation}\label{eq:N-inversion}
  N_k(x)=\frac12+\mathcal G(g_k)(x),\qquad x\in\R.
\end{equation}
Subtracting \eqref{eq:N-inversion} from the two bounds in
\eqref{eq:Prawitz-one-sided} yields
\begin{align*}
 \norm{F-N_k}_\infty
 &\le \max_{\sigma=\pm1}\sup_x
       |\mathcal G\{\mathcal M(\sigma\cdot/a)(f-g_k)\}(x)|\\
 &\qquad\qquad+\max_{\sigma=\pm1}\sup_x
       |\mathcal G\{(\mathcal M(\sigma\cdot/a)-1)g_k\}(x)|.
\end{align*}
The filter is bounded and continuous at $0$ and $\pm1$, and
$\pi u\cot(\pi u)=1+O(u^2)$ near zero.  Thus
\begin{equation}
 |\mathcal M(u)-1|\le C\min\{|u|,1\},\qquad u\in\R.
\end{equation}
Using \eqref{eq:g-L1}, we obtain, uniformly in $x$ and $\sigma$,
\begin{equation}
 \left|\mathcal G\{(\mathcal M(\sigma\cdot/a)-1)g_k\}(x)\right|
 \le\frac{C}{a}\int_\R|g_k(t)|\,\dd t\le\frac{C_k}{a}.
\end{equation}
This proves \eqref{eq:signed-smoothing}.  The assumed local integrability
ensures that the integral containing $f-g_k$ is well defined at zero.  In our
application, this follows from \eqref{eq:BJZ-low}.

It remains to verify the multiplier estimates.  Write $\mathcal M=\mathcal M_1+\ii \mathcal M_2$.
The real part $\mathcal M_1$ is even and bounded, and has bounded variation on $[-1,1]$.
Its total variation on this interval is denoted by
$\operatorname{Var}_{[-1,1]}(\mathcal M_1)$.
These properties follow from \eqref{eq:Prawitz-filter}; see also
\cite[Remark~2.1]{Pinelis2013}.  The imaginary part is
\[
  \mathcal M_2(u)=-\pi u(1-|u|)\1_{\{|u|<1\}},
\]
which is odd.  Boundedness of $\mathcal M$ gives \eqref{eq:Lambda-size} directly.
For the annular integral, the $\mathcal M_2$ contribution is bounded by
\[
  C\int_r^a\frac1a\left(1-\frac ta\right)\,\dd t\le C.
\]
The contribution of $\mathcal M_1$ over the symmetric set $r<|t|<a$ is a constant
multiple of
\[
  \int_r^a \mathcal M_1(t/a)\frac{\sin(tz)}t\,\dd t.
\]
The primitive $\int_r^y\sin(tz)\,\dd t/t$ is uniformly bounded in
$r,y,z$.  Integration by parts for functions of bounded variation bounds
the absolute value of the last integral by
$C\{\norm{\mathcal M_1}_\infty+\operatorname{Var}_{[-1,1]}(\mathcal M_1)\}$, uniformly in
$r,a,z$.  This proves \eqref{eq:Lambda-osc}.
\end{proof}

\subsection{A finite-order consequence}

The following consequence will be used for Studentization.  A bounded
change of measure controls the mean and all adjustment coefficients.

\begin{corollary}\label{cor:finite-order}
Let $n\ge d\ge2$ be integers, and let $X_1,\ldots,X_n$ be independent with common
law $P$.
Suppose that
\[
 U=\sum_{i=1}^n u(X_i),\qquad
 \E u(X_1)=0,\qquad n\E u(X_1)^2=1,\qquad \norm u_\infty\le K,
\]
where $K\ge1$.  Let $R$ be a square-integrable symmetric statistic of
Hoeffding order at most $d$, not necessarily centered.  For
$|t|\le t_0=(2K)^{-1}$, let $\E_t$ denote expectation under the product
law with common marginal distribution
\[
 P_t(\dd x)=(1+t u(x))P(\dd x),
 \qquad \varepsilon_R=\sup_{|t|\le t_0}|\E_tR|.
\]
Then
\begin{equation}\label{eq:finite-order-bound}
 \norm{F_{U+R}-\Phi}_\infty
 \le C_{d,K}\left\{n\E|u(X_1)|^3+\Var R+\varepsilon_R\right\}.
\end{equation}
\end{corollary}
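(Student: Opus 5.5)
Apply Theorem~\ref{thm:main} to $S=U+R-\E R$ with $L_j=u(X_j)$, $T=R-\E R$ and $k=d$; then control the shift $\E R$ and the adjustment coefficients $\kappa_r$ through $\E_tR$. Here $\beta=n\E|u(X_1)|^3$ and $\gamma=\Var R$. Since $R$ has Hoeffding order at most $d$, the centered remainder $T$ has a decomposition of the form \eqref{eq:T-decomp} with $k=d$. Theorem~\ref{thm:main} together with \eqref{eq:ordinary-with-coefficients} then gives
\[
 \norm{F_{S}-\Phi}_\infty\le C_d\Bigl\{\beta+\gamma+\sum_{r=1}^d|\kappa_r|\Bigr\}.
\]
Since $F_{U+R}(x)=F_S(x-\E R)$ and $\norm{\Phi'}_\infty\le1$, we get $\norm{F_{U+R}-\Phi}_\infty\le\norm{F_S-\Phi}_\infty+|\E R|$. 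Also $|\E R|=|\E_0R|\le\varepsilon_R$. It therefore suffices to show $|\kappa_r|\le C_{d,K}\varepsilon_R$ for $1\le r\le d$.

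\textbf{The polynomial identity.} Under the product law of $P_t$, the density with respect to $P^n$ is $\prod_j(1+tu(X_j))$, which is positive for $|t|\le t_0$ because $|tu|\le1/2$. Write $R=\E R+\sum_{1\le|A|\le d}T_A$. Expanding the product gives
\[
 \E_tR=\E R+\sum_{A}\sum_{B}t^{|B|}\E\Bigl(T_A\prod_{j\in B}u(X_j)\Bigr).
\]
By the canonicality argument behind \eqref{eq:kappa-with-T}, only $B=A$ contributes. Indeed, if some $j\in A\setminus B$, integrating out $X_j$ kills the term; if some $j\in B\setminus A$, the centered factor $u(X_j)$ is independent of the rest. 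Summing over $|A|=r$ and using symmetry gives
\[
 \E_tR=\E R+\sum_{r=1}^d\kappa_r t^r,\qquad |t|\le t_0,
\]
which is exactly the polynomial structure described in the strategy section.

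\textbf{Coefficient extraction.} A polynomial of degree at most $d$ is bounded by $\varepsilon_R$ on $[-t_0,t_0]$. By Markov's inequality for polynomial coefficients, or by Lagrange interpolation at $d+1$ fixed nodes in $[-t_0,t_0]$, each coefficient satisfies $|\kappa_r|\le C_d\,t_0^{-r}\varepsilon_R\le C_d(2K)^d\varepsilon_R$. Combining this with the first display gives \eqref{eq:finite-order-bound}.

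\textbf{Main obstacle.} The delicate step is the bookkeeping that identifies $\kappa_r$ as the $t^r$-coefficient, with the normalization $\binom nr$ matching \eqref{eq:kappa-definition}. It also matters that the order-one part of $R$ is handled as part of $T$; Theorem~\ref{thm:main} allows this through $T^{[1]}$. One must also check that the expansion of the product density is legitimate, that is, each term is integrable. This holds because $u$ is bounded and $R\in L^2$.
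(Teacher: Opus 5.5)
Your proposal is correct and follows the paper's argument: you write $\E_tR$ as a polynomial of degree at most $d$ in $t$ whose nonconstant coefficients are the adjustment coefficients, recover those coefficients from $\varepsilon_R$ by interpolation on $[-t_0,t_0]$, apply \eqref{eq:ordinary-with-coefficients} to $U+R-\E R$, and pay $\norm{\Phi'}_\infty|\E R|\le\varepsilon_R$ for the shift. The only difference is cosmetic: the paper writes the coefficients as $\E(RH_s)$ with $H_s=\sum_{|I|=s}\prod_{i\in I}u(X_i)$, whereas you expand $R$ into its Hoeffding kernels and check directly that only $B=A$ survives.
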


\begin{proof}
The measure $P_t$ is a probability measure, and its density lies between
$1/2$ and $3/2$.  For $0\le s\le n$, define
\[
 H_s=\sum_{\substack{I\subset\{1,\ldots,n\}\\|I|=s}}
                \prod_{i\in I}u(X_i),\qquad H_0=1.
\]
The Hoeffding decomposition of $R$ contains terms depending on at most
$d$ observations.  For $s>d$, every product in $H_s$ has a centered
factor independent of each such term.  Thus $\E(RH_s)=0$, and
\begin{equation}
 \E_tR=\E\left\{R\prod_{i=1}^n(1+t u(X_i))\right\}
       =\sum_{s=0}^d t^s\E(RH_s).
\end{equation}
All expectations exist since $u$ is bounded and $R\in L^2$.
Evaluate this polynomial at $t_j=-t_0+2jt_0/d$, $0\le j\le d$.
The points are distinct, so the Vandermonde matrix
$(t_j^s)_{0\le j,s\le d}$ is invertible.  Its inverse depends
only on $d$ and $K$.  Consequently,
\begin{equation}\label{eq:finite-order-coefficients}
 \sum_{s=0}^d|\E(RH_s)|\le C_{d,K}\varepsilon_R.
\end{equation}
Apply \eqref{eq:ordinary-with-coefficients}, which follows from
Theorem~\ref{thm:main}, to $U+R-\E R$.  By symmetry and
\eqref{eq:kappa-with-T}, its adjustment coefficients are
\[
 \kappa_s=\binom ns\E\{u(X_1)\cdots u(X_s)R\}=\E(RH_s),
 \qquad 1\le s\le d.
\]
Here $\E H_s=0$ for $s\ge1$, so centering $R$ does not change these
coefficients.  The remainder variance is $\Var R$, and
$|\E R|\le\varepsilon_R$.  Shifting the approximating normal distribution
by $\E R$ changes its distribution function by at most $\norm{\Phi'}_\infty|\E R|$.
Equation~\eqref{eq:finite-order-coefficients} now proves
\eqref{eq:finite-order-bound}.
\end{proof}

\section{Proof and optimality of Theorem~\texorpdfstring{\ref{thm:main}}{1.1}}\label{sec:proof}

We bound the first Taylor remainder in the signed smoothing integral
and then use the subsequent expansions of \cite{BJZ2009}.  Lower bounds
follow in Section~\ref{sec:main-sharp}.

\subsection{Reduction and choice of block size}
The coefficient bound in Lemma~\ref{lem:BJZ-inputs} and boundedness of
the Gaussian derivatives give
\begin{equation}
 \norm{F_S-N_k}_\infty\le C_k(1+\sqrt\gamma).
\end{equation}
If $\beta+\gamma\ge c_k>0$, this is at most
$C_k(\beta+\gamma)$, after increasing $C_k$.  Indeed, use
$1+\sqrt\gamma\le2\gamma$ when $\gamma\ge1$ and
$1+\sqrt\gamma\le2$ otherwise.  It therefore suffices to assume
\begin{equation}\label{eq:small-case}
 \beta+\gamma\le c_k,
\end{equation}
where $c_k$ is chosen below.

Choose $A_k$ as in \eqref{eq:A-choice}, and then $t_*=t_*(k)>e$ so large that
\[
 A_k\log t_*\ge4k+2,\qquad A_k\log t_*/t_*^2\le1/4.
\]
Finally, choose $c_k<\min\{1,(16t_*)^{-1},(4k)^{-1/2}\}$.
Since $1\le\sqrt n\,\beta$, condition \eqref{eq:small-case} implies
$n\ge4k$.  Define
\begin{equation}
 a=\frac1{8(\beta+\gamma)}.
\end{equation}
Then
\begin{equation}\label{eq:a-range}
 a>2t_*,\qquad a\beta\le1/8,\qquad
 a\le\sqrt n/8,\qquad a^{-1}=8(\beta+\gamma).
\end{equation}
In particular, Lemma~\ref{lem:theta} applies for $|t|\le a$, and we have
$|\kappa_r|\le1$.  By Lemma~\ref{lem:smoothing}, it remains to prove, for
both signs $\sigma$,
\begin{equation}\label{eq:signed-target}
 \sup_x\left|\int_{-a}^a e^{-\ii tx}\Lambda_{\sigma,a}(t)
                       \{f(t)-g_k(t)\}\,\dd t\right|
 \le C_k(\beta+\gamma).
\end{equation}
The low frequencies are controlled by \eqref{eq:BJZ-low}:
\begin{equation}\label{eq:low-integral}
 \int_{|t|\le t_*}|\Lambda_{\sigma,a}(t)|\,|f(t)-g_k(t)|\,\dd t
 \le C_k(\beta+\gamma)\int_0^{t_*}(1+t^{k+3})\,\dd t
 \le C_k(\beta+\gamma).
\end{equation}

For $t_*\le|t|<a$, choose the even integer
\begin{equation}
  m(t)=2\left\lfloor
    \frac{A_kn|t|^{-2}\log|t|}{2}
  \right\rfloor,
  \qquad
  q(t)=n-m(t),
\end{equation}
where
\begin{equation}\label{eq:A-choice}
  A_k>\max\{64,16(k+5)\}
\end{equation}
is fixed.  By taking $t_*=t_*(k)$ sufficiently large, we have
\begin{equation}
  4k\le m(t)\le n/4
  \qquad(t_*<|t|<a).
\end{equation}
Indeed, $|t|\le a\le\sqrt n$ gives $n/t^2\ge1$, so the lower bound follows
from $A_k\log t_*\ge4k+2$.  The upper bound follows from
$A_k\log t_*/t_*^2\le1/4$.  Since $t\mapsto t^{-2}\log t$ is decreasing for
$t>e^{1/2}$, $m(t)$ is a measurable, nonincreasing step function and $q(t)$
is nondecreasing in $|t|$.
Furthermore, if $A_k\log t_*\ge4$, then
\begin{equation}\label{eq:mt-two-sided}
  \frac{A_k}{2}\log|t|
  \le \frac{t^2m(t)}n
  \le A_k\log|t|.
\end{equation}
By \Cref{lem:theta}, \eqref{eq:a-range}, and
\eqref{eq:mt-two-sided},
\begin{equation}\label{eq:damping-polynomial}
  |\vartheta(t)|^{m(t)}+\rho(t)^{m(t)}
  \le 2|t|^{-A_k/8}.
\end{equation}

For $M=M(t)=\{1,\dots,m(t)\}$, let $T^{(0)}$ and $T^{(m)}$ be as in
\Cref{lem:BJZ-inputs}\ref{item:prop2}, and define $f_0,f_1,f_4$ by
\eqref{eq:f0-f1-f4}--\eqref{eq:f4}.  Pointwise in $t$,
\begin{equation}\label{eq:three-way}
  f-g_k=(f-f_0-f_1)+(f_0+f_1-f_4)+(f_4-g_k).
\end{equation}
The bounds from \cite{BJZ2009} for the last two terms can be integrated
in absolute value.  Indeed,
\Cref{lem:theta} and \eqref{eq:mt-two-sided} give
\begin{equation}
  |\vartheta(t)|^{m(t)/2}\le |t|^{-A_k/16},
  \qquad
  |\vartheta(t)|^{m(t)}+\rho(t)^{m(t)}\le 2|t|^{-A_k/8}.
\end{equation}
Consequently, by \eqref{eq:BJZ-prop2} and \eqref{eq:Lambda-size},
\begin{align}
  \int_{t_*<|t|<a}|\Lambda_{\sigma,a}(t)|
       |f_0+f_1-f_4|(t)\,\dd t
  &\le C_k\gamma\int_{t_*}^\infty t^{1-A_k/16}\,\dd t
   \le C_k\gamma,
  \label{eq:prop2-integrated}
\end{align}
provided $A_k>32$.  Similarly, \eqref{eq:BJZ-prop3} yields
\begin{align}
  \int_{t_*<|t|<a}|\Lambda_{\sigma,a}(t)|
       |f_4-g_k|(t)\,\dd t
  &\le C_k(\beta+\gamma)
       \int_{t_*}^\infty t^{k+3-A_k/8}\,\dd t\\
  &\le C_k(\beta+\gamma),
  \label{eq:prop3-integrated}
\end{align}
by \eqref{eq:A-choice}.
It remains to establish
\begin{equation}\label{eq:integrated-P1}
  \sup_x\left|
    \int_{t_*<|t|<a}e^{-\ii tx}\Lambda_{\sigma,a}(t)
      \{f(t)-f_0(t)-f_1(t)\}\,\dd t
  \right|
  \le C_k\gamma.
\end{equation}

In the remainder of this section, $m=m(t)$, $q=n-m$, and
$M=\{1,\dots,m\}$.  For an index set $A$,
write $L_A=\sum_{j\in A}L_j$ and
$M^c=\{1,\ldots,n\}\setminus M$.

\subsection{Decomposition of the remainder}

Write
\begin{equation}
  T^{(m)}=U+V,
\end{equation}
where
\begin{align}
  U&=\sum_{r=1}^k\sum_{\substack{|A|=r\\|A\cap M|=1}}T_A,
  \\
  V&=\sum_{r=2}^k\sum_{\substack{|A|=r\\|A\cap M|\ge2}}T_A.
\end{align}
Orthogonality gives
\begin{equation}\label{eq:UV-variance}
  \E U^2\le C_k\gamma\frac mn,
  \qquad
  \E V^2\le C_k\gamma\left(\frac mn\right)^2.
\end{equation}
Indeed, if $A$ is uniformly distributed over the $r$-subsets of
$\{1,\dots,n\}$, then
\[
  \Pp(|A\cap M|=1)\le r\frac mn,
\]
and
\[
  \Pp(|A\cap M|\ge2)
  \le \E\binom{|A\cap M|}{2}
  =\binom r2\frac{m(m-1)}{n(n-1)}
  \le C_k\left(\frac mn\right)^2.
\]
Multiplying by $\gamma_r$ and summing over $r$ proves
\eqref{eq:UV-variance}.

Define $\Psi_t(w)=e^{\ii tw}-1-\ii tw$.  The definitions of $f_0$ and
$f_1$ give
\begin{equation}\label{eq:R-exact}
  f(t)-f_0(t)-f_1(t)
  =\E\left[e^{\ii t(L+T^{(0)})}\Psi_t(U+V)\right].
\end{equation}
For each $t$, insert $1=\1_{H_q}+\1_{B_q}$ in
\eqref{eq:R-exact}, where $H_q$ and $B_q$ are defined in
Lemma~\ref{lem:submartingale}.  We estimate the two terms separately.

\subsection{Conditional variances}

For $0\le q\le n-1$, define
\begin{equation}
  I_q=\{n-q+1,\dots,n\},
  \qquad
  \cF_q=\sigma(X_i:i\in I_q).
\end{equation}
Here $I_0=\varnothing$ and $\cF_0$ is the trivial sigma-field.
Let
\begin{equation}\label{eq:Uq}
  \mathcal U_q=\sum_{r=1}^k
       \sum_{\substack{B\subset I_q\\|B|=r-1}}
       T_{\{1\}\cup B},
\end{equation}
and set
\begin{equation}\label{eq:Zq}
  Z_q=n\E(\mathcal U_q^2\mid\cF_q).
\end{equation}
Because $\mathcal U_q$ is measurable with respect to $\sigma(X_1,X_i:i\in I_q)$, the
conditional expectation in \eqref{eq:Zq} integrates only over $X_1$.
We use the running maximum of this process to define the exceptional
events in the next lemma.

\begin{lemma}\label{lem:submartingale}
The process $(Z_q,\cF_q)_{0\le q\le n-1}$ is a nonnegative submartingale, and
\begin{equation}
  \E Z_{n-1}\le k\gamma.
\end{equation}
Consequently, for every $\eta>0$,
\begin{equation}
  H_q=\left\{\max_{0\le r\le q}Z_r\le\eta\right\},
  \qquad
  B_q=H_q^c,
\end{equation}
satisfy
\begin{equation}\label{eq:PB}
  \Pp(B_q)\le\eta^{-1}k\gamma.
\end{equation}
\end{lemma}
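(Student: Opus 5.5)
The plan is to write $\mathcal U_q$ as a conditional expectation of a fixed random variable, so that the submartingale property follows from conditional Jensen, and then to use Doob's maximal inequality. Put
\[
 \mathcal U=\mathcal U_{n-1}=\sum_{r=1}^k\sum_{\substack{B\subset I_{n-1}\\|B|=r-1}}T_{\{1\}\cup B},
\]
which collects all kernels of $T$ whose index set contains $1$. For each kernel $T_{\{1\}\cup B}$ with $B\subset I_{n-1}$, canonicality gives
\[
 \E(T_{\{1\}\cup B}\mid X_1,X_i:i\in I_q)=T_{\{1\}\cup B}\1_{\{B\subset I_q\}}.
\]
Indeed, if some $j\in B$ lies outside $I_q$, integrating over $X_j$ kills the term. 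Hence
\[
 \mathcal U_q=\E(\mathcal U\mid\mathcal G_q),\qquad \mathcal G_q=\sigma(X_1,X_i:i\in I_q),
\]
and $(\mathcal G_q)$ is increasing in $q$.

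For the submartingale property, I would condition on $X_1$ and use independence: $X_1$ is independent of $\cF_{n-1}$. Let $E_1$ denote integration over $X_1$ alone. Then $Z_q=nE_1(\mathcal U_q^2)$. Write $\mathcal U_q(x)$ for $\mathcal U_q$ with $X_1=x$ fixed. Then $\mathcal U_q(x)=\E(\mathcal U_{q+1}(x)\mid\cF_q)$ for $P$-a.e.\ $x$, by the same canonicality computation with $x$ frozen; Fubini justifies this. Conditional Jensen gives
\[
 \mathcal U_q(x)^2\le\E(\mathcal U_{q+1}(x)^2\mid\cF_q).
\]
Integrating in $x$ and exchanging the integrals (Fubini, using square integrability) yields $Z_q\le\E(Z_{q+1}\mid\cF_q)$. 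Adaptedness and nonnegativity are clear, and integrability follows from $\E Z_q=n\E\mathcal U_q^2<\infty$.

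For the expectation bound, orthogonality of distinct $T_A$ gives
\[
 \E Z_{n-1}=n\E\mathcal U^2=n\sum_{r=1}^k\binom{n-1}{r-1}\E T_{1\cdots r}^2=\sum_{r=1}^k r\gamma_r\le k\gamma,
\]
using $n\binom{n-1}{r-1}=r\binom nr$.

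Finally, Doob's maximal inequality for the nonnegative submartingale gives
\[
 \Pp\Bigl(\max_{r\le q}Z_r>\eta\Bigr)\le\eta^{-1}\E Z_q\le\eta^{-1}\E Z_{n-1}\le\eta^{-1}k\gamma,
\]
where the monotonicity of $\E Z_q$ in $q$ comes from the submartingale property. The only delicate point is the frozen-$x$ conditional expectation identity, and that is routine measure theory given the square integrability of the kernels.
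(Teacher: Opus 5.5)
Your proof is correct and follows the paper's argument. Both rest on canonicality making $\mathcal U_q$ a martingale in $q$ once $X_1$ is included: the paper writes this as $\E(W_{q+1}\mid\cF_q,X_1)=0$ and expands the square, while you write $\mathcal U_q=\E(\mathcal U\mid\mathcal G_q)$ and apply conditional Jensen. You then use the same orthogonality computation $\E Z_{n-1}=\sum_r r\gamma_r\le k\gamma$ and Doob's inequality. The frozen-$x$ Fubini step is harmless but unnecessary, since the tower property gives $\E(Z_{q+1}\mid\cF_q)=n\E\{\E(\mathcal U_{q+1}^2\mid\mathcal G_q)\mid\cF_q\}\ge n\E(\mathcal U_q^2\mid\cF_q)=Z_q$ directly.
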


\begin{proof}
For $0\le q\le n-2$, let $i_{q+1}=n-q$, so $I_{q+1}=I_q\cup\{i_{q+1}\}$.  Decompose
\begin{equation}
  \mathcal U_{q+1}=\mathcal U_q+W_{q+1},
\end{equation}
where $W_{q+1}$ is the sum of the new kernels containing
$i_{q+1}$.  By canonicality in the coordinate $i_{q+1}$,
\begin{equation}\label{eq:Wcenter}
  \E(W_{q+1}\mid\cF_q,X_1)=0.
\end{equation}
Using the tower property and \eqref{eq:Wcenter},
\begin{align*}
  \E(Z_{q+1}\mid\cF_q)
  &=n\E(\mathcal U_{q+1}^2\mid\cF_q)\\
  &=n\E\{(\mathcal U_q+W_{q+1})^2\mid\cF_q\}\\
  &=Z_q+n\E(W_{q+1}^2\mid\cF_q)\\
  &\ge Z_q.
\end{align*}
Thus $Z_q$ is a nonnegative submartingale.  For $q=n-1$, orthogonality gives
\begin{align*}
  \E Z_{n-1}
  &=n\E \mathcal U_{n-1}^2\\
  &=n\sum_{r=1}^k\binom{n-1}{r-1}\E T_{1\cdots r}^2\\
  &=\sum_{r=1}^k r\binom nr\E T_{1\cdots r}^2
  \le k\gamma.
\end{align*}
Finally, \eqref{eq:PB} follows from Doob's inequality.
\end{proof}

Because $q(t)$ is nondecreasing in $|t|$, the events $B_{q(|t|)}$ form an
increasing family as $|t|$ increases.

\subsection{The exceptional event}

The monotonicity of the exceptional events allows us to use the signed
integral estimate \eqref{eq:Lambda-osc} before taking expectations.

\begin{lemma}\label{lem:nested-bad}
Let $\{B(r):t_*\le r<a\}$ be a nondecreasing family of events whose
indicators are jointly measurable in $(r,\omega)$.  Then, for every
real random variable $Y$,
\begin{equation}
  \sup_x\left|
    \int_{t_*<|t|<a}e^{-\ii tx}\Lambda_{\sigma,a}(t)
      \E\{\1_{B(|t|)}e^{\ii tY}\}\,\dd t
  \right|
  \le C\Pp(B(a-)),
\end{equation}
where $B(a-)=\bigcup_{t_*\le r<a}B(r)$.  By monotonicity, this union may be
taken over rational $r$ in $[t_*,a)$, so it is measurable.
\end{lemma}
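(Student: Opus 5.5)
The plan is to interchange the frequency integral with the expectation (Fubini) and then apply the oscillatory estimate \eqref{eq:Lambda-osc} pointwise in $\omega$. The integrand is bounded by $|\Lambda_{\sigma,a}(t)|\le C/|t|$, which is integrable on $t_*<|t|<a$ because $t_*>e>0$ and $a<\infty$. The indicators are jointly measurable by assumption. Hence Fubini applies and
\[
 \int_{t_*<|t|<a}e^{-\ii tx}\Lambda_{\sigma,a}(t)\E\{\1_{B(|t|)}e^{\ii tY}\}\,\dd t
 =\E\left[\int_{t_*<|t|<a}\1_{B(|t|)}e^{\ii t(Y-x)}\Lambda_{\sigma,a}(t)\,\dd t\right].
\]

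Next, fix $\omega$ and describe the set of frequencies where the indicator is nonzero. Since $B(r)$ is nondecreasing in $r$, the set $\{r\in[t_*,a):\omega\in B(r)\}$ is an up-interval of $[t_*,a)$. It is therefore empty, or of the form $[\tau(\omega),a)$ or $(\tau(\omega),a)$ for some $\tau(\omega)\in[t_*,a)$. It is empty exactly when $\omega\notin B(a-)$. Thus for $\omega\in B(a-)$ the inner integral is, up to a single endpoint value (a null set in $t$), the integral over $\{\tau(\omega)<|t|<a\}$ of $e^{\ii tz}\Lambda_{\sigma,a}(t)$ with $z=Y(\omega)-x$. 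By \eqref{eq:Lambda-osc} with $r=\tau(\omega)\in[0,a)$, this is bounded in absolute value by $C$, uniformly in $z$ and $\tau$. For $\omega\notin B(a-)$ the inner integral vanishes.

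Combining the two steps, the absolute value of the expectation is at most $\E[C\1_{B(a-)}]=C\Pp(B(a-))$, uniformly in $x$. Taking the supremum over $x$ gives the claim.

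The measurability of $B(a-)$ follows from monotonicity: for irrational $r$, $B(r)$ is sandwiched between $B(r')$ for rationals $r'<r$ and rationals $r'>r$, so the union over all $r\in[t_*,a)$ equals the countable union over rational $r\in[t_*,a)$, up to the trivial adjustment near $t_*$. The only point requiring care is that the uniformity of \eqref{eq:Lambda-osc} over both $z$ and the inner radius is what allows the random radius $\tau(\omega)$ and the random phase $Y(\omega)$ to be handled before taking expectations. This step is immediate from the lemma, so no genuine obstacle is expected.
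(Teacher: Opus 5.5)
Your proof is correct and follows the paper's argument. Both proofs apply \eqref{eq:Lambda-osc} pathwise on the up-interval $\{\tau(\omega)<|t|<a\}$ with $z=Y(\omega)-x$ to get the bound $C\1_{B(a-)}(\omega)$, and both justify Fubini by the integrability of $|\Lambda_{\sigma,a}|$ on the annulus; the only difference is that you swap expectation and integral before the pathwise estimate rather than after, which is immaterial.
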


\begin{proof}
For each sample point $\omega$, consider the positive
$t\in[t_*,a)$ for which $B(t)$ occurs.  By monotonicity, this set is either empty
or, up to its left endpoint, an interval $[\tau(\omega),a)$.  Therefore \eqref{eq:Lambda-osc}, with
$z=Y(\omega)-x$, gives the pathwise estimate
\[
  \left|\int_{t_*<|t|<a}e^{\ii t(Y(\omega)-x)}
     \Lambda_{\sigma,a}(t)\1_{B(|t|)}(\omega)\,\dd t\right|
  \le C\1_{B(a-)}(\omega).
\]
Since $t_*>0$ and $a<\infty$, the integral of
$|\Lambda_{\sigma,a}|$ over this annulus is finite.  Taking expectations
and applying Fubini's theorem proves the result.
\end{proof}

Since $q(r)$ is a finite-valued step function, the indicators of
$B(r)=B_{q(r)}$ are jointly measurable.  The event $B(a-)$ equals $B_{q_*}$ with
$q_*=\sup_{t_*\le r<a}q(r)$, because the indices are integers and the
events $B_q$ are nested.  Hence \eqref{eq:PB} applies directly to
$B(a-)$.

The contribution from $B_q$ to \eqref{eq:R-exact} is
\begin{align}
 \E\{\1_{B_q}e^{\ii t(L+T^{(0)})}\Psi_t(U+V)\}
 &=\E(\1_{B_q}e^{\ii tS})
   -\E\{\1_{B_q}e^{\ii t(L+T^{(0)})}\}\notag\\
 &\quad-\ii t\,\E\{\1_{B_q}e^{\ii t(L+T^{(0)})}(U+V)\}.
\end{align}
Apply \Cref{lem:nested-bad} with $B(r)=B_{q(r)}$ and $Y=S$.  By
\eqref{eq:PB}, with a fixed $\eta=\eta_k>0$,
\begin{equation}\label{eq:bad-S}
  \sup_x\left|
    \int_{t_*<|t|<a}e^{-\ii tx}\Lambda_{\sigma,a}(t)
      \E\{\1_{B_q}e^{\ii tS}\}\,\dd t
  \right|
  \le C_k\gamma.
\end{equation}

The event $B_q$ is $\cF_q$-measurable, and $T^{(0)}$ depends only on the
variables outside $M$.  Conditioning on the outside variables and integrating
over $X_1,\dots,X_m$ gives
\begin{equation}
  \E\{\1_{B_q}e^{\ii t(L+T^{(0)})}\mid\cF_q\}
  =\1_{B_q}e^{\ii t(L_{M^c}+T^{(0)})}\vartheta(t)^m.
\end{equation}
Hence
\begin{equation}
  \left|\E\1_{B_q}e^{\ii t(L+T^{(0)})}\right|
  \le C_k\gamma|\vartheta(t)|^m.
\end{equation}
Using \eqref{eq:Lambda-size} and \eqref{eq:damping-polynomial},
\begin{equation}\label{eq:bad-f0-int}
  \int_{t_*<|t|<a}|\Lambda_{\sigma,a}(t)|
  \left|\E\1_{B_q}e^{\ii t(L+T^{(0)})}\right|\dd t
  \le C_k\gamma.
\end{equation}

For the $V$-part of $f_1$, the Cauchy--Schwarz inequality, \eqref{eq:UV-variance}, and
\eqref{eq:PB} yield
\begin{equation}
  \E(\1_{B_q}|V|)
  \le \Pp(B_q)^{1/2}(\E V^2)^{1/2}
  \le C_k\gamma\frac mn.
\end{equation}
Since $|t\Lambda_{\sigma,a}(t)|\le C$ and
$m/n\le A_k\log|t|/t^2$,
\begin{equation}\label{eq:bad-V-int}
  \int_{t_*<|t|<a}|t\Lambda_{\sigma,a}(t)|
  \left|\E\1_{B_q}e^{\ii t(L+T^{(0)})}V\right|\dd t
  \le C_k\gamma.
\end{equation}

For the $U$-part, condition on the outside variables and write
\begin{equation}
  U=\sum_{j=1}^mU_j,
  \qquad
  U_j=\sum_{r=1}^k
      \sum_{\substack{B\subset M^c\\|B|=r-1}}
      T_{\{j\}\cup B}.
\end{equation}
Conditionally on $\cF_q$, the variables $U_1,\dots,U_m$ depend on distinct
observations in the block and are therefore independent.  Canonicality gives
\begin{equation}\label{eq:Uj-centered}
  \E(U_j\mid\cF_q)=0.
\end{equation}
Moreover, once the outside variables are fixed, the function of $X_j$
defining $U_j$ is the same for every $j\in M$.  Hence their conditional
variances are equal.  Write
\begin{equation}
  v_q=\E(U_j^2\mid\cF_q),\qquad j\in M.
\end{equation}
For $j=1$, $U_j$ is the variable $\mathcal U_q$ in \eqref{eq:Uq}.  Hence
$Z_q=nv_q$.  Orthogonality also gives the explicit identity
\[
  \E v_q=\sum_{r=1}^k\binom q{r-1}\E T_{1\cdots r}^2
  \le \frac1n\sum_{r=1}^k r\gamma_r.
\]
Thus
\begin{equation}\label{eq:Evq}
  \E v_q\le k\gamma/n.
\end{equation}

Set
\begin{equation}
  a_j=\E(e^{\ii tL_j}U_j\mid\cF_q).
\end{equation}
Using \eqref{eq:Uj-centered},
\begin{align}
  |a_j|
  &=\left|\E\{(e^{\ii tL_j}-1)U_j\mid\cF_q\}\right|\notag\\
  &\le \{\E|e^{\ii tL_1}-1|^2\}^{1/2}v_q^{1/2}
  \le |t|n^{-1/2}v_q^{1/2}.
  \label{eq:aj-bound}
\end{align}
Conditional independence gives
\begin{equation}
  \E(e^{\ii tL_M}U\mid\cF_q)
  =\vartheta(t)^{m-1}\sum_{j=1}^ma_j.
\end{equation}
Consequently,
\begin{align}
  \left|\E\1_{B_q}e^{\ii t(L+T^{(0)})}U\right|
  &\le |\vartheta(t)|^{m-1}m|t|n^{-1/2}
      \E(\1_{B_q}v_q^{1/2})\notag\\
  &\le |\vartheta(t)|^{m-1}m|t|n^{-1/2}
      (\E v_q)^{1/2}\Pp(B_q)^{1/2}\notag\\
  &\le C_k\gamma|\vartheta(t)|^{m-1}\frac{m|t|}{n}.
\end{align}
Since $m|t|/n\le A_k\log|t|/|t|$ and
$|\vartheta|^{m-1}\le|\vartheta|^{m/2}\le |t|^{-A_k/16}$, we obtain
\begin{equation}\label{eq:bad-U-int}
  \int_{t_*<|t|<a}|t\Lambda_{\sigma,a}(t)|
  \left|\E\1_{B_q}e^{\ii t(L+T^{(0)})}U\right|\dd t
  \le C_k\gamma.
\end{equation}
Equations \eqref{eq:bad-S}, \eqref{eq:bad-f0-int},
\eqref{eq:bad-V-int}, and \eqref{eq:bad-U-int} bound the contribution
from $B_q$ to \eqref{eq:integrated-P1}.

\subsection{The product estimate on the complementary event}

On $H_q$,
\begin{equation}
 v_q\le\eta/n.
\end{equation}
Conditionally on $\cF_q$, the pairs $(L_j,U_j)$ are independent and have
a common distribution.  Define
\begin{equation}
 b=\ii t\,\E(e^{\ii tL_1}U_1\mid\cF_q),\qquad
 r=\E\{e^{\ii tL_1}(e^{\ii tU_1}-1-\ii tU_1)\mid\cF_q\}.
\end{equation}
The conditional Taylor remainder is therefore
\begin{align}
 Q_m(t)&=\E\{e^{\ii tL_M}(e^{\ii tU}-1-\ii tU)\mid\cF_q\}\notag\\
       &=(\vartheta+b+r)^m-\vartheta^m-m\vartheta^{m-1}b.
\end{align}
For complex $z,d$ and $m\ge2$, Taylor's formula along the line segment
from $z$ to $z+d$ gives
\[
 |(z+d)^m-z^m-mz^{m-1}d|
 \le\tfrac12m(m-1)(|z|+|d|)^{m-2}|d|^2.
\]
Consequently,
\begin{equation}\label{eq:product-bound}
 |Q_m(t)|\le m|\vartheta|^{m-1}|r|
       +\tfrac12m(m-1)(|\vartheta|+|b|+|r|)^{m-2}(|b|+|r|)^2.
\end{equation}
By \eqref{eq:aj-bound} and $|e^{\ii x}-1-\ii x|\le x^2/2$,
\begin{equation}
 |b|\le t^2\sqrt{v_q/n},\qquad |r|\le t^2v_q/2.
\end{equation}
Thus, on $H_q$,
\[
 |b|+|r|\le(\sqrt\eta+\eta/2)t^2/n.
\]
Since $t^2/n\le1$ and
\[
 \inf_{0<x\le1}\frac{e^{-x/8}-e^{-x/4}}x>0,
\]
we can choose $\eta>0$ so small that Lemma~\ref{lem:theta} implies
\begin{equation}\label{eq:sum-damping}
 |\vartheta|+|b|+|r|\le e^{-t^2/(8n)}\qquad\hbox{on }H_q.
\end{equation}
Combining \eqref{eq:product-bound}--\eqref{eq:sum-damping} and using $m\ge8$,
we obtain
\[
 \1_{H_q}|Q_m(t)|
 \le C e^{-t^2m/(16n)}\1_{H_q}
       \{mt^2v_q+m^2t^4(v_q/n+v_q^2)\}.
\]
On $H_q$, $v_q^2\le\eta v_q/n$.  Taking expectations and using
\eqref{eq:Evq} gives
\begin{equation}
 \E\{\1_{H_q}|Q_m(t)|\}
 \le C_k\gamma e^{-t^2m/(16n)}
       \left\{\frac{t^2m}{n}+\left(\frac{t^2m}{n}\right)^2\right\}
 \le C_k\gamma(\log|t|)^2|t|^{-A_k/32}.
\end{equation}
It follows from \eqref{eq:Lambda-size} that
\begin{equation}\label{eq:Q-integrated}
 \int_{t_*<|t|<a}|\Lambda_{\sigma,a}(t)|
                  \E\{\1_{H_q}|Q_m(t)|\}\,\dd t\le C_k\gamma.
\end{equation}

It remains to estimate the terms containing $V$.  For real $u,v$,
\begin{align}
 |\Psi_t(u+v)-\Psi_t(u)|
 &=|\ii tv(e^{\ii tu}-1)+e^{\ii tu}(e^{\ii tv}-1-\ii tv)|\notag\\
 &\le t^2|uv|+t^2v^2/2.
 \label{eq:V-deterministic}
\end{align}
By \eqref{eq:UV-variance},
\begin{equation}
 \E|UV|\le(\E U^2\,\E V^2)^{1/2}
          \le C_k\gamma(m/n)^{3/2}.
\end{equation}
Hence \eqref{eq:Lambda-size} and \eqref{eq:mt-two-sided} yield
\begin{align}
 \int_{t_*<|t|<a}|\Lambda_{\sigma,a}(t)|t^2\E|UV|\,\dd t
 &\le C_k\gamma\int_{t_*}^\infty\frac{(\log t)^{3/2}}{t^2}\,\dd t
 \le C_k\gamma,\\
 \int_{t_*<|t|<a}|\Lambda_{\sigma,a}(t)|t^2\E V^2\,\dd t
 &\le C_k\gamma\int_{t_*}^\infty\frac{(\log t)^2}{t^3}\,\dd t
 \le C_k\gamma.\label{eq:V2-integrated}
\end{align}
Finally,
\[
 \E\{\1_{H_q}e^{\ii t(L+T^{(0)})}\Psi_t(U)\}
 =\E\{\1_{H_q}e^{\ii t(L_{M^c}+T^{(0)})}Q_m(t)\}.
\]
Its absolute value is bounded by $\E\{\1_{H_q}|Q_m(t)|\}$.
Together with \eqref{eq:V-deterministic}, equations
\eqref{eq:Q-integrated}--\eqref{eq:V2-integrated} bound the contribution
from $H_q$.  The estimates for $B_q$ then prove \eqref{eq:integrated-P1}.

We now combine the estimates for the signed Fourier integral.  The low-frequency
part is bounded by \eqref{eq:low-integral}.  In the high-frequency region,
\eqref{eq:three-way}, \eqref{eq:integrated-P1},
\eqref{eq:prop2-integrated}, and \eqref{eq:prop3-integrated} give
\[
  \sup_x\left|
    \int_{t_*<|t|<a}e^{-\ii tx}\Lambda_{\sigma,a}(t)
       \{f(t)-g_k(t)\}\,\dd t
  \right|
  \le C_k(\beta+\gamma).
\]
Thus \eqref{eq:signed-target} holds.  Applying \Cref{lem:smoothing} and using
$a^{-1}\le C_k(\beta+\gamma)$ yields
\[
  \norm{F_S-N_k}_\infty
  \le C_ka^{-1}+C_k(\beta+\gamma)
  \le C_k(\beta+\gamma).
\]
This proves \Cref{thm:main}.

\subsection{Optimality}
\label{sec:main-sharp}

We next show that both terms in \eqref{eq:main} are necessary.  For a
sequence $S_n=L_n+T_n$, let $\beta_n$, $\gamma_n$, and $N_{k,n}$ denote the
quantities defined in Section~\ref{sec:main-results}.

\begin{proposition}\label{prop:main-sharp}
For every fixed $k\ge2$, there are examples satisfying the assumptions
of Theorem~\ref{thm:main}.
\begin{enumerate}[label=\textup{(\roman*)}]
\item There is a sequence with $T_n=0$, $\beta_n\to0$, and
\[
 \norm{F_{S_n}-N_{k,n}}_\infty\asymp\beta_n.
\]
\item There is a sequence with $\gamma_n\to0$, $\beta_n=o(\gamma_n)$, and
\[
 \norm{F_{S_n}-N_{k,n}}_\infty\asymp\gamma_n.
\]
In this example, $T_n$ is canonical of order two and all adjustment
coefficients vanish.
\end{enumerate}
\end{proposition}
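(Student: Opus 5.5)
For part (i) we reduce to a lattice sum. Take $T_n=0$, so that $\gamma_n=0$, every $\kappa_r=0$ and $N_{k,n}=\Phi$. Let $L_j=\varepsilon_j/\sqrt n$ with independent Rademacher signs $\varepsilon_j$. Then $\beta_n=n\cdot n^{-3/2}=n^{-1/2}\to0$. For $n$ even, $S_n$ has an atom at $0$ of mass $\binom{n}{n/2}2^{-n}\sim\sqrt{2/(\pi n)}$. Since $\Phi$ is continuous, $\norm{F_{S_n}-\Phi}_\infty$ is at least half this mass, which is at least $c\,n^{-1/2}=c\beta_n$. The matching upper bound $O(\beta_n)$ is the classical Berry--Esseen theorem, or equivalently Theorem~\ref{thm:main} with $\gamma=0$.

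For part (ii) we want a remainder whose influence on the distribution is of exact order $\gamma_n$, while keeping $\beta_n$ of smaller order.
\begin{itemize}
\item Take $L_j=\xi_j/\sqrt n$ with $\xi_j$ standard Gaussian, so that $\beta_n=Cn^{-1/2}$ and $L\sim N(0,1)$ exactly.
\item Take the canonical order-two remainder
\[
 T_n=\frac{\delta_n}{n}\sum_{i<j}\zeta_i\zeta_j,
\]
where the $\zeta_i$ are independent Rademacher signs, independent of the $\xi_i$. So $X_i=(\xi_i,\zeta_i)$, and the kernel $t_2(x,y)=\delta_n\zeta_x\zeta_y/n$ is canonical in each coordinate.
\item Then $\gamma_n=\binom n2\delta_n^2/n^2\sim\delta_n^2/2$. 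Choose $\delta_n\to0$ with $\delta_n^2\sqrt n\to\infty$, for example $\delta_n=n^{-1/8}$. This gives $\beta_n=o(\gamma_n)$.
\item All adjustment coefficients vanish. Indeed $\kappa_1=0$ because $T$ has no order-one component. Also $\kappa_2=\binom n2\E(L_1L_2T_{12})=0$, since the $\xi$'s are independent of the $\zeta$'s and centered. The same argument kills $\kappa_r$ for every $r\le k$. Hence $N_{k,n}=\Phi$.
\end{itemize}

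The upper bound $O(\gamma_n)$ then follows from Theorem~\ref{thm:main}. For the lower bound, write $W=n^{-1/2}\sum\zeta_i$. Then
\[
 T_n=\frac{\delta_n}{2}(W^2-1),
\]
and $S_n=Z+\tfrac{\delta_n}{2}(W^2-1)$, where $Z\sim N(0,1)$ is independent of $W$. Conditioning on $W$ gives
\[
 F_{S_n}(x)=\E\,\Phi\bigl(x-\tfrac{\delta_n}{2}(W^2-1)\bigr).
\]
We expand this to second order in $\delta_n$, using $\E(W^2-1)=0$ and $\E(W^2-1)^2\to2$ (the Gaussian limit of $W$, with uniform moment bounds). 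This yields
\[
 F_{S_n}(x)-\Phi(x)=\tfrac{\delta_n^2}{8}\,\Phi''(x)\,\E(W^2-1)^2+O(\delta_n^3\,\E|W^2-1|^3).
\]
Evaluating at any $x$ with $\Phi''(x)\ne0$, say $x=-1$, gives a difference of at least $c\,\delta_n^2\asymp\gamma_n$ for large $n$. We also need $\gamma_n\to0$ and $\beta_n=o(\gamma_n)$, which both hold by the choice of $\delta_n$. For $k>2$ the same example works, since it has Hoeffding order two and so order at most $k$.

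The main obstacle is to make the second-order expansion rigorous with an explicit error term, and to check that the first-order term really vanishes. The first-order term is $-\tfrac{\delta_n}{2}\phi(x)\E(W^2-1)$. Here $\E W^2=1$ exactly, because the normalization over $i<j$ gives $\sum_{i<j}\zeta_i\zeta_j=\tfrac12(nW^2-n)$ exactly. So the first-order term is exactly $0$. The third-order Taylor error is bounded by $\norm{\Phi'''}_\infty\delta_n^3\E|W^2-1|^3$, and this moment is uniformly bounded in $n$ by Khintchine's inequality.
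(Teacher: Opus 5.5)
Your proposal is correct. Part (i) is the paper's argument: Rademacher summands, an atom of mass $2^{-n}\binom n{\lfloor n/2\rfloor}\sim\sqrt{2/(\pi n)}$, and Theorem~\ref{thm:main} for the upper bound. For odd $n$, simply use the atom at $n^{-1/2}$ instead of $0$.

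For part (ii) you use a genuinely different example.

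\emph{The paper's example.} It takes $X_i=(Z_i,J_i)$ with $J_i$ uniform on $\{1,\dots,m_n\}$, $m_n=\lceil n^{9/4}\rceil$, and $T_n=D_n-\lambda_n$, where $D_n$ counts label collisions. Here the remainder is a rare jump of size about one: $D_n\in\{0,1\}$ except on an event of probability $O(\lambda_n^2)$. The discrepancy $F_{S_n}(0)-\tfrac12=c_*\lambda_n+O(\lambda_n^2)$ is a first-order effect of this skewed law, located at $x=0$.

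\emph{Your example.} Your $T_n=\tfrac{\delta_n}{2}(W^2-1)$ is instead a small, diffuse perturbation. The discrepancy comes from the second-order variance-inflation term $\tfrac{\delta_n^2}{8}\Phi''(x)\E(W^2-1)^2$, where in fact $\E(W^2-1)^2=2-2/n$ exactly. The Taylor error is $O(\delta_n^3)$, controlled by Khintchine. This route is more elementary, and it makes transparent that $\gamma$ measures extra spread that $N_k$ does not correct.

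\emph{What each buys.} The paper's choice is made for reuse in Proposition~\ref{prop:student-sharp}. Because its discrepancy sits at the threshold $0$, dividing by a positive jackknife standard deviation leaves it unchanged. Moreover, the rescaled canonical kernel $A_n\{\1_{\{j=j'\}}-m_n^{-1}\}$ has a $5/3$ moment of exactly the order of the error, which gives sharpness of the $5/3$ exponent. Your example could not serve there, for two reasons. First, $\Phi''(0)=0$, so its effect at $x=0$ is only $O(\delta_n^3)=O(\gamma_n^{3/2})$. Second, after rescaling to $\sigma_g=1$ its kernel is of a single bounded scale, so the $5/3$-moment term in \eqref{eq:studentized-main} would greatly exceed $\gamma_n$.
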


\begin{proof}
For the first assertion, take $L_n=n^{-1/2}\sum_i\varepsilon_i$, where the
$\varepsilon_i$ are independent and take the values $-1,1$ with equal
probabilities.  Then $\beta_n=n^{-1/2}$ and $N_{k,n}=\Phi$.  A distribution
with an atom of mass $a$ has Kolmogorov distance at least $a/2$ from every continuous
distribution function.  Stirling's formula gives
\[
 \max_x\Pp(L_n=x)=2^{-n}\binom n{\lfloor n/2\rfloor}
       \sim\sqrt{\frac{2}{\pi n}}.
\]
This proves the lower bound.  Theorem~\ref{thm:main} gives the upper bound.

For the second assertion, let $X_i=(Z_i,J_i)$, where $Z_i$ is standard
normal and, independently, $J_i$ is uniform on $\{1,\ldots,m_n\}$, with
$m_n\ge2$.  Define
\begin{equation}\label{eq:collision-definitions}
 \begin{split}
 L_n&=n^{-1/2}\sum_iZ_i,\\
 D_n&=\sum_{i<j}\1_{\{J_i=J_j\}},\qquad
 \lambda_n=\binom n2/m_n,\qquad T_n=D_n-\lambda_n.
 \end{split}
\end{equation}
The kernel $\1_{\{J_i=J_j\}}-m_n^{-1}$ is canonical.  The collision
indicators are pairwise independent, even when the corresponding pairs
share one index.  Consequently,
\begin{equation}\label{eq:collision-moments}
 \begin{split}
 \E D_n&=\lambda_n,\qquad
 \gamma_n=\Var D_n=\lambda_n(1-m_n^{-1}),\\
 \E\{D_n(D_n-1)\}
 &=\binom n2\left(\binom n2-1\right)m_n^{-2}\le\lambda_n^2.
 \end{split}
\end{equation}
Moreover, $L_n$ is standard normal and independent of $T_n$.  All
adjustment coefficients vanish, and
$\beta_n=2\sqrt{2/\pi}\,n^{-1/2}$.

Suppose $\lambda_n\to0$.  The last bound in
\eqref{eq:collision-moments} implies
\[
 \Pp(D_n=0)=1-\lambda_n+O(\lambda_n^2),\qquad
 \Pp(D_n=1)=\lambda_n+O(\lambda_n^2),\qquad
 \Pp(D_n\ge2)=O(\lambda_n^2).
\]
Indeed, both $\Pp(D_n\ge2)$ and
$\E\{D_n\1_{\{D_n\ge2\}}\}$ are bounded by
$\E\{D_n(D_n-1)\}$.  Conditioning on $D_n$ and expanding $\Phi$ at
$0$ and $-1$ now give
\begin{equation}\label{eq:rare-shift}
 \begin{split}
 F_{L_n+T_n}(0)-\tfrac12
 &=\E\Phi(\lambda_n-D_n)-\tfrac12\\
 &=c_*\lambda_n+O(\lambda_n^2),\qquad
 c_*=\phi(0)+\Phi(-1)-\tfrac12>0.
 \end{split}
\end{equation}
Here $\phi=\Phi'$ and
$c_*=\int_0^1\{\phi(0)-\phi(x)\}\,\dd x$.
This calculation uses only that $D_n$ is a nonnegative integer-valued random
variable with mean $\lambda_n$ and second factorial moment $O(\lambda_n^2)$,
independent of the standard normal variable $L_n$.

Choose $m_n=\lceil n^{9/4}\rceil$.  Then
$\gamma_n\sim\lambda_n\sim\tfrac12n^{-1/4}$ and
$\beta_n=o(\gamma_n)$.  Equation~\eqref{eq:rare-shift} gives the required
lower bound, while Theorem~\ref{thm:main} gives the upper bound.
\end{proof}

The collision calculation \eqref{eq:rare-shift} will also give the
lower bound for Studentization.  At threshold zero, division by a
positive jackknife standard deviation does not change the event.

\section{Studentization at \texorpdfstring{$5/3$}{5/3}}
\label{sec:studentized}

We retain the notation of Theorem~\ref{thm:studentized}; in particular,
$g$ is the first projection of $h$.  We compare the Studentized statistic
with two polynomials and apply Corollary~\ref{cor:finite-order}.
The truncation probability, the remainder variance, and the absolute
value of the signed expectation under a bounded change of measure are
all bounded at the same order.

\subsection{Reduction and truncation}

The following elementary lemma compares the ratio with two polynomials.

\begin{lemma}\label{lem:student-sandwich}
Let $W$ and $\Delta$ be real random variables with $1+\Delta\ge0$.  Define
$Z=W/\sqrt{1+\Delta}$ when $1+\Delta>0$, and assign any value to $Z$
otherwise.  Let
\[
 Y_r=W(1-\Delta/2+r\Delta^2),\qquad r=0,1.
\]
Then
\begin{equation}\label{eq:student-sandwich}
 \norm{F_Z-\Phi}_\infty
 \le\max_{r=0,1}\norm{F_{Y_r}-\Phi}_\infty
       +\Pp(|\Delta|>1/2).
\end{equation}
\end{lemma}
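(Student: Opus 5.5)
The plan is a pointwise sandwich of $Z$ between $Y_0$ and $Y_1$ on the event $G=\{|\Delta|\le1/2\}$, followed by a one-sided comparison of distribution functions. The key elementary fact concerns $\psi(\delta)=(1+\delta)^{-1/2}$ on $|\delta|\le1/2$:
\[
 1-\delta/2\le\psi(\delta)\le1-\delta/2+\delta^2 .
\]
For the lower bound, $\psi$ is convex and $1-\delta/2$ is its tangent line at $0$. For the upper bound, Taylor's theorem gives $\psi(\delta)=1-\delta/2+\tfrac38(1+\xi)^{-5/2}\delta^2$ with $\xi$ between $0$ and $\delta$. Since $(1+\xi)^{-5/2}\le(1/2)^{-5/2}=4\sqrt2$, the quadratic coefficient is at most $\tfrac32\sqrt2\approx2.12$. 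That exceeds $1$, so the crude bound fails for negative $\delta$ near $-1/2$ and the inequality has to be checked directly. For $\delta\ge0$ the coefficient is at most $3/8$, so there is nothing to do. For $\delta\in[-1/2,0]$, I would verify $(1+\delta)^{-1/2}\le1-\delta/2+\delta^2$, which is equivalent to $(1+\delta)(1-\delta/2+\delta^2)^2\ge1$. This is a polynomial inequality on a compact interval and can be checked directly; at $\delta=-1/2$ the left side is $\tfrac12(1.5)^2=1.125$. If it failed somewhere, I would shrink the threshold below $1/2$. The lemma as stated suggests the constant $1$ was chosen to make it work.

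With the sandwich in hand, on $G$ the value $Z$ lies between $Y_0$ and $Y_1$: if $W\ge0$ then $Y_0\le Z\le Y_1$, and if $W<0$ the order is reversed. Hence on $G$ we have $\min(Y_0,Y_1)\le Z\le\max(Y_0,Y_1)$. Note that $1+\Delta>0$ on $G$, so $Z$ is well defined there. This gives, for every $x$,
\[
 \{Z\le x\}\cap G\subset\{Y_0\le x\}\cup\{Y_1\le x\},
\]
which is useless as it stands because a union of two events does not bound a single distribution function. The better route is to split by the sign of $W$.

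For the upper bound, on $G\cap\{W\ge0\}$ we have $Z\ge Y_0$, and on $G\cap\{W<0\}$ we have $Z\ge Y_1$. So $\{Z\le x\}\cap G$ is contained in $(\{Y_0\le x\}\cap\{W\ge0\})\cup(\{Y_1\le x\}\cap\{W<0\})$, and this still mixes the two events. I expect this to be the main obstacle.

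The fix I would try is to compare each of $Z$, $Y_0$, $Y_1$ with $W$ itself at the threshold, using the sign of $x$:

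\emph{Case $x\ge0$.} Here $\{Z\le x\}$ contains $\{W<0\}\cap G$ automatically, since then $Z<0\le x$. Likewise $Y_r<0$ on $\{W<0\}\cap G$, because $1-\Delta/2+r\Delta^2>0$ there. So for $x\ge0$,
\[
 \Pp(Z\le x,G)=\Pp(W<0,G)+\Pp(0\le W,\,Z\le x,\,G).
\]
On $\{W\ge0\}\cap G$ we have $Y_0\le Z\le Y_1$, so
\[
 \Pp(Y_1\le x,G)\le\Pp(Z\le x,G)\le\Pp(Y_0\le x,G).
\]

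\emph{Case $x<0$.} The mirror argument applies: only $\{W<0\}$ contributes, and there $Y_1\le Z\le Y_0$, so
\[
 \Pp(Y_0\le x,G)\le\Pp(Z\le x,G)\le\Pp(Y_1\le x,G).
\]

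In both cases $\Pp(Z\le x,G)$ lies between $\Pp(Y_0\le x,G)$ and $\Pp(Y_1\le x,G)$. Adding or removing $G^c$ changes each probability by at most $\Pp(G^c)$, which gives
\[
 F_Z(x)-\Phi(x)\le\max_r\{F_{Y_r}(x)-\Phi(x)\}+\Pp(G^c),
\]
together with the symmetric lower bound. Taking the supremum over $x$ yields \eqref{eq:student-sandwich}.
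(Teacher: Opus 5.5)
Your proof is correct and follows the paper's argument: the same two-sided bound on $(1+\delta)^{-1/2}$, the split by the sign of $W$, and the observation that for $x\ge0$ all three variables lie below $x$ on $\{W<0\}\cap\{|\Delta|\le1/2\}$ (mirrored for $x<0$), which yields nested events. The polynomial inequality you left open does hold, because $(1+\delta)(1-\delta/2+\delta^2)^2-1=\delta^2\{\tfrac54(1+\delta)+\delta^3\}\ge\delta^2/2$ for $|\delta|\le1/2$, so the threshold $1/2$ need not be shrunk.
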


\begin{proof}
For $|x|\le1/2$,
\[
 1-x/2\le(1+x)^{-1/2}\le1-x/2+x^2.
\]
All three factors are positive.  On $\{|\Delta|\le1/2,W\ge0\}$,
$Y_0\le Z\le Y_1$, whereas the inequalities are reversed when $W<0$.
If $z\ge0$, all three variables are below $z$ on
$\{|\Delta|\le1/2,W<0\}$.  Hence
\[
 \{Y_1\le z\}\subseteq\{Z\le z\}\subseteq\{Y_0\le z\}
 \quad\text{on }\{|\Delta|\le1/2\},\qquad z\ge0.
\]
For $z<0$, the roles of $Y_0$ and $Y_1$ are interchanged.  Allowing for the
exceptional event proves \eqref{eq:student-sandwich}.
\end{proof}

Replacing $h$ by $(h-\theta)/\sigma_g$, we may assume $\theta=0$ and
$\sigma_g=1$.  Let
\begin{equation}
 a=\E|g_1|^3,\qquad b=\E|\eta_{12}|^{5/3},\qquad
 \rho=(a+b)/\sqrt n.
\end{equation}
We may assume that these moments are finite.  Since $a\ge1$, it suffices
to prove the theorem under the condition $\rho\le c_0$, where $c_0>0$ is
a sufficiently small absolute constant.  In particular,
$n\ge10$ for sufficiently small $c_0$.  Truncation at $n^{3/2}$ gives the
following tail and second-moment bounds:
\begin{equation}
 n^2\Pp(|\eta_{12}|>n^{3/2})\le\frac b{\sqrt n},\qquad
 \frac1n\E\{\eta_{12}^2\1_{\{|\eta_{12}|\le n^{3/2}\}}\}
       \le\frac b{\sqrt n}.
\end{equation}
We also truncate according to a conditional moment to obtain the
uniform row bounds needed for the jackknife denominator.

Define
\begin{equation}
 m(x)=\int|\eta(x,y)|^{5/3}\,P(\dd y),\qquad
 D=\{x:m(x)>n^{3/2}\},\qquad \delta=P(D),
\end{equation}
and write $J(x)=\1_{D^c}(x)$.  Let
\begin{align}
 g^\flat(x)&=g(x)\1_{\{|g(x)|\le\sqrt n\}},\notag\\
 \eta^\flat(x,y)&=\eta(x,y)J(x)J(y)
                    \1_{\{|\eta(x,y)|\le n^{3/2}\}}.
\end{align}
Define the truncation event by
\[
 \mathcal E_n=
 \bigcup_{i=1}^n\{|g_i|>\sqrt n\}
 \;\cup\;\bigcup_{i=1}^n\{m(X_i)>n^{3/2}\}
 \;\cup\;\bigcup_{1\le i<j\le n}\{|\eta_{ij}|>n^{3/2}\}.
\]
On $\mathcal E_n^c$, the truncated and original kernel values agree for
all observed pairs, and
\begin{align}
 \Pp(\mathcal E_n)
 &\le n\Pp(|g_1|>\sqrt n)+n\Pp(m(X_1)>n^{3/2})
       +\binom n2\Pp(|\eta_{12}|>n^{3/2})\notag\\
 &\le C\rho.
 \label{eq:student-censor-prob}
\end{align}
Here $\delta\le b n^{-3/2}$.

To center the truncated kernel, let
\begin{equation}\label{eq:student-centering-defs}
 m_0=\E g^\flat(X_1),\qquad
 r(x)=\int\eta^\flat(x,y)\,P(\dd y),\qquad c=\E r(X_1),
\end{equation}
and define
\begin{align}
 p(x)&=r(x)-c,\notag\\
 \eta_0(x,y)&=\eta^\flat(x,y)-r(x)-r(y)+c,\notag\\
 g_*(x)&=g^\flat(x)-m_0+p(x),\qquad s=\{\E g_*(X_1)^2\}^{1/2}.
 \label{eq:student-censored-hoeffding}
\end{align}
Thus $g_*$ is centered, $\eta_0$ is canonical, and the truncated kernel is
\begin{equation}\label{eq:student-hflat}
 h^\flat(x,y)=g^\flat(x)+g^\flat(y)+\eta^\flat(x,y)
 =2m_0+c+g_*(x)+g_*(y)+\eta_0(x,y).
\end{equation}
Essential suprema below are taken with respect to $P$ or $P\otimes P$.
We also use $C_0$ for an absolute constant.  Subscripts on constants
indicate dependence on the corresponding parameters.  The next lemma bounds
the centering terms and the change in the variance of the first projection.

\begin{lemma}
The quantities in \eqref{eq:student-centering-defs}--\eqref{eq:student-censored-hoeffding}
satisfy
\begin{align}
 |m_0|&\le a/n,& |c|&\le Cb/n,\notag\\
 \norm{p}_\infty&\le C\sqrt n,&
 \E p(X_1)^2&\le Cb/\sqrt n,&
 \E|p(X_1)|^3&\le Cb,
 \label{eq:student-centering-bounds}\\
 |s^2-1|&\le C\rho,&
 \E|g_*(X_1)|^3&\le C(a+b),&
 \norm{g_*}_\infty&\le C\sqrt n.
 \label{eq:student-first-projection}
\end{align}
Moreover,
\begin{align}
 \E\eta_0(X_1,X_2)^2&\le\sqrt n\,b,
 &\E|\eta_0(X_1,X_2)|^{5/3}&\le Cb,\notag\\
 \mathop{\rm ess\,sup}_{x}\int\eta_0(x,y)^2\,P(\dd y)&\le Cn^2,
 &\norm{\eta_0}_\infty&\le Cn^{3/2}.
 \label{eq:student-eta0-bounds}
\end{align}
\end{lemma}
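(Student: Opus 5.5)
The estimates are elementary consequences of three facts: the normalization $\E g_1=0$, $\E g_1^2=1$; the canonicality of $\eta$, namely $\int\eta(x,y)\,P(\dd y)=0$; and the two truncations. I would check them in the order stated, since the later bounds use the earlier ones.

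\emph{The centering terms.} For $m_0$, since $\E g_1=0$,
\[
 |m_0|=|\E g_1\1_{\{|g_1|>\sqrt n\}}|\le \E|g_1|^3/n=a/n .
\]
For $r$, canonicality gives $r(x)=-J(x)\int\eta(x,y)\{1-J(y)\1_{\{|\eta(x,y)|\le n^{3/2}\}}\}\,P(\dd y)$. So $r(x)$ is the integral of $\eta(x,\cdot)$ over the set $\{y\in D\}\cup\{|\eta(x,y)|>n^{3/2}\}$. On that set we use two pointwise bounds. When $|\eta|>n^{3/2}$ we have $|\eta|\le|\eta|^{5/3}n^{-1}$. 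When $y\in D$ we apply H\"older: $\int_D|\eta(x,y)|\,P(\dd y)\le m(x)^{3/5}\delta^{2/5}$. Since $x\notin D$ gives $m(x)\le n^{3/2}$, and $\delta\le bn^{-3/2}$, this term is at most $n^{9/10}b^{2/5}n^{-3/5}$.

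These bounds yield $\E|r|\le Cb/n$, which gives $|c|\le Cb/n$; this needs $b\le a\sqrt n$, which follows from $\rho\le c_0$. They also yield $\norm r_\infty\le C\sqrt n$ and hence $\norm p_\infty\le C\sqrt n$. For the second and third moments, I interpolate:
\[
 \E p^2\le\norm p_\infty\,\E|p|\le Cb/\sqrt n,\qquad \E|p|^3\le\norm p_\infty^2\,\E|p|\le Cb .
\]

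\emph{The first projection $g_*$.} The sup bound is immediate: $\norm{g_*}_\infty\le\sqrt n+|m_0|+\norm p_\infty$. The third moment follows from Minkowski's inequality together with the bounds on $m_0$ and $p$. For the variance, expand
\[
 s^2=\E(g^\flat)^2-m_0^2+2\E(g^\flat p)+\E p^2 .
\]
Each piece is controlled separately. First, $|\E(g^\flat)^2-1|\le\E g_1^2\1_{\{|g_1|>\sqrt n\}}\le a/\sqrt n$. Next, $m_0^2\le a^2/n^2$. Cauchy--Schwarz gives $|\E(g^\flat p)|\le(Cb/\sqrt n)^{1/2}$, which is not $O(\rho)$. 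So the cross term needs a sharper bound: H\"older with exponents $3$ and $3/2$ gives $|\E(g^\flat p)|\le a^{1/3}(\E|p|^{3/2})^{2/3}$. The needed bound $\E|p|^{3/2}\le Cb\,n^{-3/4}$ again comes from interpolating between $\norm p_\infty$ and $\E|p|$. The result is $|s^2-1|\le C\rho$.

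\emph{The canonical part $\eta_0$.} The second moment uses the truncation level $n^{3/2}$: $\E(\eta^\flat)^2\le n^{3/2\cdot1/3}\,\E|\eta|^{5/3}=\sqrt n\,b$. Since $\eta_0$ is the canonical projection of $\eta^\flat$, it has smaller second moment than $\eta^\flat$, so $\E\eta_0^2\le\sqrt n\,b$. The $5/3$ moment uses $|\eta^\flat|\le|\eta|$, and the terms $r(x)$, $r(y)$, $c$ are controlled by the bounds above; this gives $\E|\eta_0|^{5/3}\le Cb$. For the row bound, with $x\in D^c$,
\[
 \int(\eta^\flat)^2\,P(\dd y)\le n^{3/2\cdot1/3}m(x)\le n^2 .
\]
Adding the $r$ and $c$ corrections, each of size $O(n)$ in square, keeps this at most $Cn^2$. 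Finally, $\norm{\eta_0}_\infty\le n^{3/2}+2\norm r_\infty+|c|\le Cn^{3/2}$.

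The main obstacle is the cross term $\E(g^\flat p)$ in $s^2$. There the naive Cauchy--Schwarz bound only gives $O(\rho^{1/2})$, and the sharper H\"older estimate on $p$ described above is needed to reach $|s^2-1|\le C\rho$.
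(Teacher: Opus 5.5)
\textbf{There is a genuine gap: the $L^1$ bound on $r$ that most of your sketch rests on is false.} Your splitting $|r|\le R_D+R_T$, with $R_D=Jm^{3/5}\delta^{2/5}$ and $R_T=Jm/n$, is the paper's, but these bounds do not give $\E|r|\le Cb/n$. For $R_T$ it is fine. For the $D$-part, however, the best available estimate is $\E R_D\le\delta^{2/5}\E m(X_1)^{3/5}\le\delta^{2/5}b^{3/5}\le bn^{-3/5}$, and this order actually occurs (the condition $b\le a\sqrt n$ plays no role). Here is an example. Let $\eta$ depend only on a label, with $P(D)=\delta$ and $D^c=A_+\cup A_-$ split evenly, $\epsilon=\pm1$ on $A_\pm$. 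Set $\eta(x,y)=v\epsilon(x)$ for $x\in D^c$, $y\in D$ (and symmetrically), $\eta=0$ on $D\times D$, and $\eta(x,y)=-\delta v\{\epsilon(x)+\epsilon(y)\}/(1-\delta)$ on $D^c\times D^c$, with $|v|^{5/3}=2n^{3/2}$ and $\delta n$ small. This kernel is canonical, $D=\{m>n^{3/2}\}$, $b\approx4\delta n^{3/2}$, and $r=-\delta v\epsilon$ on $D^c$. Hence $\E|r|\asymp n^{2/5}\,b/n$.

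\textbf{Consequences for $c$ and the moments of $p$.} The estimate $|c|\le\E|r|$ only gives $|c|\le Cbn^{-3/5}$, which would also ruin $|d_n|\le C\rho$ later. The missing idea is the canonicality identity $\E\{\eta_{12}J(X_1)J(X_2)\}=\E\{\eta_{12}\1_D(X_1)\1_D(X_2)\}$. It forces both coordinates into $D$ and gives $|c|\le b/n+b^{3/5}\delta^{4/5}\le2b/n$. Likewise, your interpolations $\E p^2\le\norm p_\infty\E|p|$ and $\E|p|^3\le\norm p_\infty^2\E|p|$ fail for the $D$-part. With the correct bounds $\norm{R_D}_\infty\le\sqrt n(b/\sqrt n)^{2/5}$ and $\E R_D\le bn^{-3/5}$, the product is $b^{7/5}n^{-3/10}$, which is not $O(b/\sqrt n)$. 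You must compute $\E R_D^q=\delta^{2q/5}\E\{Jm^{3q/5}\}$ directly, spending the cutoff $m\le n^{3/2}$ on $D^c$ only on the excess power of $m$. This gives $\E R_D^2\le n^{3/10}b\delta^{4/5}\le(b/\sqrt n)^{9/5}$ and $\E R_D^3\le n^{6/5}b\delta^{6/5}\le b(b/\sqrt n)^{6/5}$.

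\textbf{The cross term in $s^2$.} You rightly identify this as the crux, but it inherits the same problem and has a second one. The claimed bound $\E|p|^{3/2}\le Cbn^{-3/4}$ is false in general. In the example above with $\delta$ chosen so that $b\asymp n^{2/5}$, one gets $\E|r|^{3/2}\asymp b^{3/2}n^{-9/10}$. Even the correct estimate $(\E R_D^{3/2})^{2/3}\le bn^{-3/5}$, inserted into H\"older with exponents $3$ and $3/2$, yields only $a^{1/3}bn^{-3/5}$. That is not $O(\rho)$ when, say, $a\asymp b\gg n^{3/10}$, and $\rho\le c_0$ allows $a$ and $b$ up to order $\sqrt n$.

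The paper splits the cross term. The $R_T$-part is trivial: $\E\{|g^\flat|R_T\}\le\sqrt n\,\E R_T\le b/\sqrt n$. For the $D$-part it applies H\"older with exponents $5/3$ and $5/2$ directly to the two-variable integrand $|g^\flat(X_1)\eta_{12}|J(X_1)\1_D(X_2)$. It uses $\E|g_1|^{5/2}\le(\E g_1^2)^{1/2}(\E|g_1|^3)^{1/2}=a^{1/2}$, obtaining $a^{1/5}b^{3/5}\delta^{2/5}\le(b/\sqrt n)(a/\sqrt n)^{1/5}$. Only $a^{1/5}$ enters, and it is absorbed by $a/\sqrt n\le c_0$.

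\textbf{What survives.} The rest of your sketch goes through once the moment bounds for $p$ are repaired as above. This covers the bound on $m_0$, the sup bounds, and the bounds for $\eta_0$; the $5/3$ moment there is cleanest via Jensen, $|r(x)|^{5/3}\le\int|\eta^\flat(x,y)|^{5/3}P(dy)$.
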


\begin{proof}
The bound for $m_0$ follows from $\E g=0$ and
$\E|g|\1_{\{|g|>\sqrt n\}}\le a/n$.  Canonicality gives
\[
 \E\{\eta_{12}J(X_1)J(X_2)\}
 =\E\{\eta_{12}\1_D(X_1)\1_D(X_2)\}.
\]
H\"older's inequality and independence therefore imply
\begin{align}
 |c|&\le b/n+b^{3/5}\delta^{4/5}\notag\\
 &\le\frac bn\{1+(b/\sqrt n)^{2/5}\}\le 2b/n.
 \label{eq:student-c-small}
\end{align}

Canonicality also gives
\begin{align}
 r(x)={}&-J(x)\int_D\eta(x,y)\,P(\dd y)\notag\\
 &-J(x)\int\eta(x,y)J(y)
                  \1_{\{|\eta(x,y)|>n^{3/2}\}}\,P(\dd y).
 \label{eq:student-r-decomposition}
\end{align}
The absolute values of the two terms are bounded, respectively, by
\[
 R_D(x)=J(x)m(x)^{3/5}\delta^{2/5},\qquad
 R_T(x)=J(x)m(x)/n.
\]
Since $m\le n^{3/2}$ on $D^c$ and $\E m(X_1)=b$,
\begin{align}
 \norm{R_T}_\infty&\le\sqrt n,&
 \E R_T(X_1)^2&\le b/\sqrt n,&
 \E R_T(X_1)^3&\le b,\notag\\
 \norm{R_D}_\infty&\le\sqrt n\,(b/\sqrt n)^{2/5},&
 \E R_D(X_1)^2&\le(b/\sqrt n)^{9/5},&
 \E R_D(X_1)^3&\le b(b/\sqrt n)^{6/5}.
\end{align}
For example,
$\E R_D^2\le n^{3/10}b\delta^{4/5}$ and
$\E R_D^3\le n^{6/5}b\delta^{6/5}$.
Since $b/\sqrt n\le c_0$, these estimates and \eqref{eq:student-c-small}
prove the bounds for $p$ in \eqref{eq:student-centering-bounds}.

To prove \eqref{eq:student-first-projection}, we bound the covariance
between $g^\flat(X_1)$ and $p(X_1)$.
The truncation of $g$ changes its variance by at most $2a/\sqrt n$.
The term containing $R_T$ in \eqref{eq:student-r-decomposition} satisfies
\[
 \E\{|g^\flat(X_1)|R_T(X_1)\}\le b/\sqrt n.
\]
For the other term, H\"older's inequality with exponents $5/3$ and $5/2$
gives
\begin{align}
 &\E\{|g^\flat(X_1)\eta_{12}|J(X_1)\1_D(X_2)\}\notag\\
 &\qquad\le b^{3/5}
       \{\E|g^\flat(X_1)|^{5/2}\,\delta\}^{2/5}\notag\\
 &\qquad\le a^{1/5}b^{3/5}\delta^{2/5}
       \le\frac b{\sqrt n}(a/\sqrt n)^{1/5}
       \le Cb/\sqrt n.
\end{align}
Here we used
$\E|g|^{5/2}\le(\E g^2)^{1/2}(\E|g|^3)^{1/2}=a^{1/2}$.
Also, $|m_0c|\le Cab/n^2\le Cb/\sqrt n$.  Hence
$|\Cov(g^\flat(X_1),p(X_1))|\le Cb/\sqrt n$.  The identity
\[
 s^2=\Var(g^\flat(X_1))
       +2\Cov(g^\flat(X_1),p(X_1))+\E p(X_1)^2
\]
proves $|s^2-1|\le C\rho$.  The third-moment and essential supremum bounds follow
from \eqref{eq:student-centering-bounds} and $|g^\flat|\le\sqrt n$.

Finally,
\[
 \E(\eta^\flat)^2\le n^{1/2}b,\qquad
 \int\eta^\flat(x,y)^2\,P(\dd y)
       \le n^{1/2}J(x)m(x)\le n^2.
\]
The canonical projection is an orthogonal projection in $L^2(P^2)$,
which proves the first bound in \eqref{eq:student-eta0-bounds}.
Each of the four terms defining $\eta_0$ has $L^{5/3}$ norm at most
that of $\eta^\flat$.  Thus
$\E|\eta_0|^{5/3}\le Cb$.  The conditional second-moment bound follows
from the preceding row bound and
$\norm r_\infty\le C\sqrt n$, $\E r^2\le Cb/\sqrt n$.
The pointwise bound follows from $|\eta^\flat|\le n^{3/2}$ and
\eqref{eq:student-censored-hoeffding}.
\end{proof}

Choose $c_0$ so that $1/2\le s^2\le2$.  Since $s>0$, we may define
\begin{equation}\label{eq:student-uv}
 u(x)=\frac{g_*(x)}{s\sqrt n},\qquad
 v(x,y)=\frac{\eta_0(x,y)}{s\sqrt n(n-1)},\qquad
 d_n=\frac{\sqrt n\,(m_0+c/2)}s.
\end{equation}
Write $u_i=u(X_i)$ and $v_{ij}=v(X_i,X_j)$.  The preceding lemma gives
\begin{equation}\label{eq:student-u-basic}
 \E u_1=0,\quad n\E u_1^2=1,\quad
 \norm u_\infty\le C,\quad n\E|u_1|^3\le C\rho,
 \quad |d_n|\le C\rho,
\end{equation}
and
\begin{equation}
 \E(v_{12}\mid X_1)=\E(v_{12}\mid X_2)=0,
 \qquad \norm v_\infty\le C,
\end{equation}
\begin{equation}\label{eq:student-v-L2}
 n^2\E v_{12}^2\le C\rho,\qquad
 n\mathop{\rm ess\,sup}_{x}\int v(x,y)^2\,P(\dd y)\le C.
\end{equation}
The mixed moment satisfies
\begin{equation}\label{eq:student-one-edge}
 n^2\E|u_1u_2v_{12}|\le C\rho.
\end{equation}
Indeed, H\"older's inequality and
$\E|g_*|^{5/2}\le(\E g_*^2)^{1/2}(\E|g_*|^3)^{1/2}$ give
\begin{align*}
 \E|g_*(X_1)g_*(X_2)\eta_0(X_1,X_2)|
 &\le \{\E|g_*|^{5/2}\}^{4/5}\{\E|\eta_0|^{5/3}\}^{3/5}\\
 &\le C(a+b)^{2/5}b^{3/5}\le C(a+b).
\end{align*}
The normalization in \eqref{eq:student-uv} proves
\eqref{eq:student-one-edge}.

\subsection{The denominator polynomial}

Let $U_n^\flat$ and $\widehat\sigma_\flat^2$ denote the statistic and
jackknife estimator formed from $h^\flat$.  Let
\[
 U=\sum_i u_i,\qquad V=\sum_{i<j}v_{ij},\qquad
 W=d_n+U+V,\qquad r_i=\sum_{j\ne i}v_{ij}.
\]
By \eqref{eq:student-hflat},
\begin{equation}\label{eq:student-exact-ratio}
 \frac{\sqrt n\,U_n^\flat}{2\widehat\sigma_\flat}
 =\frac{W}{\sqrt{1+\Delta}},\qquad
 1+\Delta=\frac{\widehat\sigma_\flat^2}{s^2}.
\end{equation}
We use the same convention on both sides when the denominator is zero.
In particular,
$1+\Delta\ge0$.

Substitution of \eqref{eq:student-hflat} into the jackknife formula gives
\begin{align}
 1+\Delta={}&\frac n{n-1}\sum_i(u_i-U/n)^2\notag\\
 &+\frac{2n}{n-2}\sum_i(u_i-U/n)(r_i-2V/n)\notag\\
 &+\frac{n(n-1)}{(n-2)^2}\sum_i(r_i-2V/n)^2.
\end{align}
Consequently,
\begin{equation}
 \Delta=\Delta_1+\Delta_2+\Delta_3,
\end{equation}
where
\begin{align}
 \Delta_1&=\sum_i(u_i^2-1/n)-\frac2{n-1}\sum_{i<j}u_i u_j,
 \\
 \Delta_2&=\frac{2n}{n-2}
       \left\{\sum_{i<j}(u_i+u_j)v_{ij}-\frac2nUV\right\},
 \label{eq:student-B}\\
 \Delta_3&=\frac{n(n-1)}{(n-2)^2}
       \left\{\sum_i r_i^2-\frac4nV^2\right\}.
 \label{eq:student-C}
\end{align}
Here $\Delta_1$ is a sum of centered squares and an off-diagonal quadratic term.
Each term of $\Delta_2$ contains a canonical pair factor, and each term of $\Delta_3$
contains at least two such factors.  All deterministic coefficients are
bounded uniformly for $n\ge4$.  We retain the factor $(n-1)^{-1}$ in the
second term of $\Delta_1$ when estimating moments.

For $r=0,1$, define
\begin{equation}\label{eq:student-Yr}
 Y_r=W(1-\Delta/2+r\Delta^2),\qquad R_r=Y_r-U.
\end{equation}
The statistic $Y_r$ is symmetric and has Hoeffding order at most ten,
since $W$ has order at most two and $\Delta$ has order at most four.
We next estimate the variance of $R_r$ and its expectation under the
laws $P_t$ defined below.

\subsection{Moment estimates}

We use the scalar and Hilbert space forms of Rosenthal's inequality.
Let $\xi_i$ be independent centered random vectors in a finite-dimensional
Hilbert space with norm $\|\cdot\|$.  For $p\ge2$,
\begin{equation}\label{eq:rosenthal-H}
 \E\Big\|\sum_i\xi_i\Big\|^p
 \le C_p\left\{\Big(\sum_i\E\|\xi_i\|^2\Big)^{p/2}
                   +\sum_i\E\|\xi_i\|^p\right\}.
\end{equation}
See \cite[Theorem~5.3]{Pinelis1994}; the constant $C_p$ is independent
of the dimension.
For independent nonnegative random variables $\zeta_i$ and $p\ge1$, we
also use
\begin{equation}\label{eq:rosenthal-positive}
 \E\Big(\sum_i\zeta_i\Big)^p
 \le C_p\left\{\Big(\sum_i\E\zeta_i\Big)^p
                   +\sum_i\E\zeta_i^p\right\};
\end{equation}
see \cite[Section~2.1]{GineLatalaZinn2000}.  The constants depend only
on $p$.  The following lemma gives moment bounds for a canonical pair sum and the
vector of its row sums.

\begin{lemma}\label{lem:student-kernel-mom}
Let $X_1,\ldots,X_n$ be independent with common law $P$, and let
$a_0(x,y)$ be a symmetric canonical kernel.  Suppose $0<\rho\le1$ and
that, for some $K\ge1$,
\[
 \norm{a_0}_\infty\le K,\qquad
 n\mathop{\rm ess\,sup}_x\E a_0(x,X_1)^2\le K,\qquad
 n^2\E a_0(X_1,X_2)^2\le K\rho.
\]
For every fixed $p\ge2$,
\begin{equation}\label{eq:student-kernel-mom}
 \E\left|\sum_{i<j}a_0(X_i,X_j)\right|^p
 +\E\left\{\sum_i\left(\sum_{j\ne i}a_0(X_i,X_j)\right)^2\right\}^{p/2}
 \le C_{p,K}\rho.
\end{equation}
\end{lemma}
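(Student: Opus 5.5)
Both sums are handled by martingale (or Hilbert-space) decompositions and Rosenthal's inequality, used twice: first at the outer level, then on the resulting conditional square functions. Write $a_{ij}=a_0(X_i,X_j)$ and $S_2=\sum_{i<j}a_{ij}=\sum_{j=2}^n\xi_j$ with $\xi_j=\sum_{i<j}a_{ij}$. By canonicality, $(\xi_j)$ is a martingale difference sequence for the filtration generated by $X_1,\dots,X_j$. For the row-sum term, view $\mathbf r=(r_1,\dots,r_n)$, $r_i=\sum_{j\ne i}a_{ij}$, as a vector in $\R^n$. Write $\mathbf r=\sum_j\boldsymbol\zeta_j$, where $\boldsymbol\zeta_j$ collects the $X_j$-contributions. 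The vector $\boldsymbol\zeta_j$ has coordinate $a_{ij}$ in slot $i\ne j$ and, in slot $j$, the contribution of $X_j$ to $r_j$. These are not independent, so instead I would condition. Fix $i$, condition on $X_i$, and note that $r_i$ is then a sum of independent centered terms. That gives one option. Alternatively, apply the Burkholder--Rosenthal (martingale) version of \eqref{eq:rosenthal-H} to the vector martingale $\mathbf r^{(j)}=\sum_{i,l\le j,\,i\ne l}a_{il}e_i$, whose increments are centered given the past by canonicality. I would choose the martingale route uniformly. The Pinelis result \cite{Pinelis1994} covers martingales in Hilbert spaces with the conditional square function $\sum_j\E(\|d_j\|^2\mid\mathcal F_{j-1})$ in place of the variance sum.

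\textbf{Step 1 (conditional square function).} For the scalar case, $\E(\xi_j^2\mid X_1,\dots,X_{j-1})=\sum_{i,l<j}G(X_i,X_l)$ with $G(x,y)=\E a_0(x,X)a_0(y,X)$. The diagonal part is $\sum_{i<j}G(X_i,X_i)\le jK/n$, using the row bound $G(x,x)\le K/n$. Summed over $j$ this is bounded by $C_K n$, which is too crude. I instead center it: $\E G(X_1,X_1)=\E a_{12}^2\le K\rho/n^2$, so the diagonal has mean $\le K\rho$ after summing over $j$. Its $p/2$-th moment is then controlled by \eqref{eq:rosenthal-positive}: the mean term gives $(K\rho)^{p/2}\le K^{p/2}\rho$, since $\rho\le1$. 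The max term is $\sum_i\E\{(n G(X_i,X_i))^{p/2}\}\le n(K)^{p/2-1}\E G\cdot n^{p/2-1}\cdots$, which after using $nG\le K$ reduces to $C_K n\cdot n\E G\le C_K\rho$. The off-diagonal part $\sum_j\sum_{i\ne l<j}G(X_i,X_l)$ is again a degenerate $U$-statistic, with kernel $G$ centered via canonicality, $\|G\|_\infty\le K/n$ and $\E G^2\le (K/n)\E a_{12}^2\cdot$const. I would bound its $p/2$ moment by iterating the same decomposition (or by a crude $L^2$ plus $L^\infty$ interpolation) to get a bound of order $C_K\rho$.

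\textbf{Step 2 (max term).} The max term is $\sum_j\E|\xi_j|^p$. Conditionally on $X_j$, $\xi_j$ is a sum of $j-1$ independent centered terms bounded by $K$, with conditional variance $\le K/n$ each. Scalar Rosenthal then gives $\E|\xi_j|^p\le C_p\{\E(\sum_{i<j}\E(a_{ij}^2\mid X_j))^{p/2}+j\E|a_{12}|^p\}$. Write $\sum_{i<j}\E(a_{ij}^2\mid X_j)=jG(X_j,X_j)\le K$. The first term is then at most $K^{p/2-1}j\E G\le C_K\rho/n$, and the second is at most $jK^{p-2}\E a_{12}^2\le C_K\rho/n$. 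Summing over $j$ gives $C_K\rho$.

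\textbf{Step 3 (row sums).} For $\E\|\mathbf r\|^p$, the same scheme with the Hilbert-space martingale inequality applies. The increment at step $j$ is $d_j=\sum_{i<j}a_{ij}(e_i+e_j)$. Its squared norm is $\sum_{i<j}a_{ij}^2+\xi_j^2$, and both pieces were already bounded in Steps 1--2. The main obstacle is the off-diagonal $U$-statistic in the conditional square function in Step 1, where I must exploit $\|G\|_\infty\le K/n$ to avoid losing a factor of $n$. If iteration is awkward, I would use $\E|\cdot|^{p/2}\le\|\cdot\|_\infty^{p/2-2}\E|\cdot|^2$ for $p\ge4$, together with a direct second-moment computation giving $O(\rho)$, and handle $2\le p<4$ by Lyapunov.
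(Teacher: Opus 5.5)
Your martingale route differs from the paper's, and parts of it are sound: Step~2, the diagonal part of Step~1, and the increment bookkeeping in Step~3. The gap is the step you flag as the main obstacle, and neither of your two options closes it as written. The predictable square function is
\[
 \sum_{j}\E(\xi_j^2\mid X_1,\dots,X_{j-1})=\sum_i(n-i)G(X_i,X_i)+O,
 \qquad O=2\sum_{i<l}(n-l)G(X_i,X_l),
\]
and you need $\E|O|^{p/2}\le C_{p,K}\rho$. The vector martingale in Step~3 inherits the same term through $\xi_j^2$.

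Your fallback $\E|O|^{p/2}\le\norm{O}_\infty^{p/2-2}\E O^2$ fails, because $\norm{O}_\infty$ is not bounded. Take $a_0(x,y)=\phi(x)\phi(y)$ with $\phi=\1_D-\delta$, $P(D)=\delta=1/n$, and $K=\rho=1$. Then $a_0$ is symmetric and canonical and satisfies all three hypotheses. On the positive-probability event $\{X_1,\dots,X_n\in D\}$, however, $G(X_i,X_l)=(1-\delta)^3/n$ and
\[
 O=2\binom n3(1-\delta)^3/n\asymp n^2 .
\]
So the interpolation loses a power of $n$ for every $p>4$.

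The iteration option can work, but not with the estimate you state. Your bound $\E G^2\le (K/n)\E a_{12}^2$ only gives $n^2\E(nG)^2\le K^2n\rho$. The rescaled kernel $nG$ then violates the third hypothesis, and the recursion does not close. You need the sharper Cauchy--Schwarz bound $G(x,y)^2\le G(x,x)G(y,y)$, which gives
\[
 \E G(X_1,X_2)^2\le(\E a_{12}^2)^2\le K^2\rho^2/n^4 .
\]
Then $nG$ is canonical, with $\norm{nG}_\infty\le K$, $n\operatorname{ess\,sup}_x\E(nG(x,X_1))^2\le K^2\rho$, and $n^2\E(nG)^2\le K^2\rho^2$. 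These are the original hypotheses with $\rho$ replaced by $\rho^2$.

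To complete the argument you must also do two things. First, state the lemma for weighted sums $\sum_{i<l}w_l\,a(X_i,X_l)$ with bounded deterministic weights depending on the larger index; this class is closed under the recursion. Second, induct $p\to p/2$. Once $p/2<2$, use Lyapunov with $\E O^2\le C_K\rho^2$, which gives $C_K\rho^{p/2}\le C_K\rho$. The gain $\rho\to\rho^2$ is what makes this base case work. With these additions your proof is complete.

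The paper avoids the cross terms $G(X_i,X_l)$ altogether by decoupling. It splits the indices at random into two groups and conditions on one group. The column vectors $(a_0(X_i,Y_j))_{i\in I}$ are then independent and centered. In the Hilbert-space Rosenthal inequality, the variance term involves only $\sum_{i\in I}b_0(X_i)$, because the squared norm $\sum_i a_0(X_i,Y_j)^2$ never mixes rows. The scalar sum is then dominated by the row-sum vector through conditional scalar Rosenthal and Jensen. This costs a decoupling step but needs no induction on $p$.
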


\begin{proof}
First consider two independent groups $(X_i)_{i\in I}$ and
$(Y_j)_{j\in J}$, all with law $P$.  If either group is empty, both sums vanish.  By
relabelling, assume $I=\{1,\ldots,r\}$ and $J=\{1,\ldots,s\}$, where
$1\le r,s\le n$.  Write $Y=(Y_j)_{j\in J}$ and set
\[
 R_i=\sum_{j\in J}a_0(X_i,Y_j),\qquad
 b_0(x)=\E a_0(x,X_1)^2.
\]
Conditionally on the first group, the vectors
$(a_0(X_i,Y_j))_{i\in I}$ are independent and centered.  By
\eqref{eq:rosenthal-H},
\begin{align*}
 \E\Big(\sum_{i\in I}R_i^2\Big)^{p/2}
 &\le C_p\E\Big(n\sum_{i\in I}b_0(X_i)\Big)^{p/2}\\
 &\quad+C_p n\E\Big(\sum_{i\in I}a_0(X_i,Y_1)^2\Big)^{p/2}.
\end{align*}
Apply \eqref{eq:rosenthal-positive} to the first sum and, conditionally on
$Y_1$, to the second.  Symmetry of $a_0$ gives the upper bound
\begin{equation}
 C_p\left\{(n^2\E a_{0,12}^2)^{p/2}
      +n^{1+p/2}\E b_0(X_1)^{p/2}+n^2\E|a_{0,12}|^p\right\}
 \le C_{p,K}\rho,
\end{equation}
where $a_{0,12}=a_0(X_1,X_2)$.  For the middle term, use
$b_0\le K/n$ and $\E b_0\le K\rho/n^2$; for the last, use $|a_0|\le K$.
The column sums satisfy the same estimate.

Conditionally on the second group, the $R_i$ are independent and centered.
The scalar form of Rosenthal's inequality gives the conditional bound
$C_p\{(\sum_i\E(R_i^2\mid Y))^{p/2}+
\sum_i\E(|R_i|^p\mid Y)\}$.
Jensen's inequality bounds the first term by
$C_p\E\{(\sum_iR_i^2)^{p/2}\mid Y\}$, and
$\sum_i|R_i|^p\le(\sum_iR_i^2)^{p/2}$ bounds the second.
Taking expectations, we obtain
\[
 \E\left|\sum_{i\in I}R_i\right|^p
 \le C_p\E\Big(\sum_{i\in I}R_i^2\Big)^{p/2}.
\]
This proves both bounds for the rectangular array.

For the original sample, assign each index independently to one of two
groups with probability $1/2$.  Retain only pairs joining different groups.
Given the observations, the expectation of the retained pair sum is half
the original sum.  The same holds coordinatewise for the vector of row
sums.  The coordinates in the two groups are the row and column sums of
the rectangular array.  Jensen's inequality and the preceding bounds prove
\eqref{eq:student-kernel-mom}.
\end{proof}

Choose an absolute constant $K\ge1$ larger than the bounds for $u$ and $v$ in
\eqref{eq:student-u-basic}--\eqref{eq:student-v-L2}, and let $t_0=(2K)^{-1}$.
For $|t|\le t_0$, define the measure
\begin{equation}
 P_t(\dd x)=(1+t u(x))P(\dd x).
\end{equation}
This is a probability measure because $\E u=0$, and its density lies between
$1/2$ and $3/2$.  Write $\E_t$ for expectation when all observations are
independent with law $P_t$.  The kernels and deterministic coefficients
remain as defined under $P$.  We need moment bounds uniform in $t$, as well as
bounds for three signed expectations.

\begin{lemma}\label{lem:student-tilted}
For every fixed $p\ge2$ and nonnegative integer $j$,
\begin{align}
 \sup_{|t|\le t_0}\E_t\{(1+|U|^j)(|V|^p+|\Delta|^p)\}
       &\le C_{p,j}\rho,\notag\\
 \sup_{|t|\le t_0}\E_t\{(1+|U|^j)\Delta_3\}
       &\le C_j\rho.
 \label{eq:student-weighted-moments}
\end{align}
Here $\Delta_3\ge0$ by \eqref{eq:student-C}.  Also,
\begin{equation}\label{eq:student-signed-means}
 \sup_{|t|\le t_0}
 \{ |\E_t V|+|\E_t(U\Delta_1)|+|\E_t(U\Delta_2)|\}\le C_0\rho.
\end{equation}
\end{lemma}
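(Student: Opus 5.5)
We first reduce every expectation under $\E_t$ to expectations under $\E$ with a bounded weight, and then apply Lemma~\ref{lem:student-kernel-mom} and Rosenthal's inequality. The density of $P_t^{\otimes n}$ with respect to $P^{\otimes n}$ is $\prod_i(1+tu_i)$. This product is not uniformly bounded, but it has all moments bounded uniformly in $|t|\le t_0$. Indeed, $\log(1+tu_i)=tu_i-O(t^2u_i^2)$ with $n\E u_1^2=1$ and $\norm u_\infty\le K$, so
\[
 \E\prod_i(1+tu_i)^q=\{\E(1+tu_1)^q\}^n\le e^{C_q}.
\]
Hölder's inequality therefore gives $\E_t|X|\le C\{\E|X|^{2}\}^{1/2}$. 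That bound loses a square root of $\rho$, so we will not use it. Instead we work directly under $P_t$. Under $P_t$, $u$ becomes a bounded function with mean $t\E u^2=t/n$ and second moment of order $1/n$. The kernel $v$ stays bounded, has row second moments $\le C/n$ and $n^2\E_t v_{12}^2\le C\rho$, because the densities lie in $[1/2,3/2]$. It loses canonicality only through the drift $\int v(x,y)tu(y)P(\dd y)$.

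\textbf{Step 1 (recentering under $P_t$).} Under $P_t$, write the Hoeffding decomposition of $v$: $v=v^t_0+\pi^t(x)+\pi^t(y)+c^t$. By Cauchy--Schwarz and the row bound, $|\pi^t(x)|\le |t|\,\{\int v(x,y)^2P(\dd y)\}^{1/2}(\E u^2)^{1/2}\le C/n$. Also $\E_t\pi^t(X_1)^2\le Cn^{-1}\E v_{12}^2\le C\rho/n^3$ and $|c^t|\le C\rho/n^{2}$, using canonicality of $v$ under $P$ and $n^2\E|u_1u_2v_{12}|\le C\rho$ from \eqref{eq:student-one-edge}. The canonical part $v_0^t$ satisfies the hypotheses of Lemma~\ref{lem:student-kernel-mom} under $P_t$ with $K$ replaced by $CK$. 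The linear parts contribute $\sum_i(n-1)\pi^t(X_i)+\binom n2c^t$. Their $p$th moments are $O(\rho)$ by Rosenthal's inequality, since $\norm{n\pi^t}_\infty\le C$ and $n\E(n\pi^t)^2\le C\rho$; the constant term is $O(\rho)$. This gives $\E_t|V|^p\le C\rho$ and the same bound for the row-sum vector $(r_i)$, that is, $\E_t(\sum r_i^2)^{p/2}\le C\rho$.

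\textbf{Step 2 ($\Delta$ and the weight $1+|U|^j$).} Under $P_t$, $\Delta_1$ splits into a sum of the bounded centered variables $u_i^2-\E_tu_1^2$, an off-diagonal term in $u_i-t/n$, and deterministic corrections of size $O(n\E|u|^3+1/n)=O(\rho)$. Since $\norm u_\infty\le C/\sqrt n\cdot\sqrt n$, more precisely $|u_i|\le C$ with $n\E u^4\le C\E|g_*|^3/n\cdots$, Rosenthal's inequality gives $\E_t|\Delta_1|^p\le C\rho$: its variance part is $n\E u^4\le Cn\E|u|^3$. The bounds for $\Delta_2$ and $\Delta_3$ follow from Cauchy--Schwarz, $|\sum_iu_ir_i|\le(\sum u_i^2)^{1/2}(\sum r_i^2)^{1/2}$, together with Step 1 and $\E_t(\sum u_i^2)^p\le C$. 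For the weight we use Hölder's inequality with a high exponent. We have $\E_t|U|^{q}\le C_q$ for every $q$. For the factor $|V|^p$ we use $|V|^p(1+|U|^j)\le|V|^{p}+|V|^{2p}+|U|^{2j}|V|^{p}\cdots$. This is where care is needed, since naive Hölder loses a power of $\rho$. Instead we use $\E_t(|U|^{j}|V|^p)\le(\E_t U^{2j})^{1/2}(\E_t|V|^{2p})^{1/2}$ and observe that the moment bound in Step 1 is linear in $\rho$ for every $p$, so $\E_t|V|^{2p}\le C\rho$. That gives only $\sqrt\rho$. The fix is to expand $U=U_{\text{far}}+U_{\text{near}}$. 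Alternatively, since the bound $\E|\cdot|^p\le C\rho$ holds because the variable is essentially nonzero only on an event of probability $\rho$, we prove the estimate with the weight directly. We condition on the configuration and apply Lemma~\ref{lem:student-kernel-mom}'s decoupling with $U$ replaced by sums over the other group, using that $|U|^j$ is bounded in every $L^q$ conditionally. The same treatment applies to $\Delta_3\ge0$. I expect this weighted step to be the main obstacle.

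\textbf{Step 3 (signed means).} Write $\E_tV=\binom n2\E_t v_{12}=\binom n2 t^2\E(u_1u_2v_{12})$, which is $O(\rho)$ by \eqref{eq:student-one-edge} and canonicality. For $\E_t(U\Delta_1)$, expand in $u$: every term is a product of bounded $u$'s. Centering leaves only $n\E_t u^3$-type terms and $n^{-1}$ corrections, which are $O(n\E|u|^3)=O(\rho)$. For $\E_t(U\Delta_2)$, expand $\sum_k u_k\sum_{i<j}(u_i+u_j)v_{ij}$. Terms with $k\in\{i,j\}$ give $n^2\E_t(u_1^2u_2v_{12})$ and $n^2\E_t(u_1u_2v_{12})$-type quantities, which are bounded by $C n^2\E|u_1u_2v_{12}|\le C\rho$ since $|u|\le C$. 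Terms with $k\notin\{i,j\}$ factor as $(t/n)\cdot n^2\E_t(u_1v_{12})$. These reduce via the density expansion to $t\,n\E(u_1u_2v_{12})$ and similar quantities, which are again $O(\rho)$. The $UV$ correction is handled by the same expansion.
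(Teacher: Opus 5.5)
Your Step 1, your unweighted bounds for $\Delta_1$ and $\Delta_3$, and your Step 3 are sound; they agree with the paper's computations at $z=0$. There is, however, a genuine gap.

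\textbf{The weighted estimate.} The weighted bound in \eqref{eq:student-weighted-moments} is the substance of the lemma. It is also what the next lemma uses, for $\E(U^2\Delta^2)$, $\E_t|U\Delta_3|$ and $\E_t(|U|\Delta^2)$. You correctly note that H\"older's inequality only gives $\rho^{1/q'}$ with $q'>1$. But your proposed remedies (a near/far split of $U$, or conditioning plus decoupling) are not carried out. Nor do they obviously work: $U$ and $V$ are built from the same observations, so no conditioning makes the weight independent of the pair sum.

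The missing idea is an exponential change of measure. Bound $|U|^j\le C_j(e^U+e^{-U})$. Then, for nonnegative $G$ and $z=\pm1$,
\[
 \E_t(e^{zU}G)=M(t,z)^n\,\E_{t,z}G,\qquad
 P_{t,z}(\dd x)=M(t,z)^{-1}(1+tu(x))e^{zu(x)}P(\dd x),
\]
where $M(t,z)=\E\{(1+tu)e^{zu}\}=1+O(1/n)$, so $M(t,z)^n\le C$. Since $\norm{u}_\infty\le K$, the density of $P_{t,z}$ is bounded above and below. Writing $w=(1+tu)e^{zu}$, one has $|w-1|\le C|u|$, and hence $|\E_{t,z}u|\le C/n$ and $|\E_{t,z}u^2-1/n|\le C\rho/n$. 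By canonicality under $P$ and \eqref{eq:student-one-edge}, the projections of $v$ under $P_{t,z}$ obey the same bounds as your $\pi^t$ and $c^t$. Your unweighted arguments, rerun under $P_{t,\pm1}$ instead of $P_t$, therefore give the weighted bounds with no loss in $\rho$. This is exactly the paper's route.

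\textbf{The bound for $\Delta_2$.} The same loss of a power of $\rho$ occurs here, even for $j=0$. The estimate $|\sum_iu_ir_i|\le(\sum_iu_i^2)^{1/2}(\sum_ir_i^2)^{1/2}$, followed by H\"older, yields only $\E_t|\Delta_2|^p\le C\rho^{1/q'}$. The reason is that $(\sum_iu_i^2)^{p/2}$ is merely bounded in every $L^q$, not small.

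The paper avoids this by writing $\sum_iu_ir_i=\sum_{i<j}v_1(X_i,X_j)$ with $v_1(x,y)=(u(x)+u(y))v(x,y)$. This kernel inherits the sup, row and $L^2$ bounds of $v$ because $|u|\le K$. Its projections under the tilted measure satisfy $\norm{q_1}_\infty\le C/n$, $n^3\E q_1^2\le C\rho$, and $|\mu_1|\le C\rho/n^2$, the last again by \eqref{eq:student-one-edge}. So your Step 1 decomposition, together with the pair-sum moment lemma and Rosenthal's inequality, applies to it directly. The correction $UV/n$ is then handled by the deterministic bound $|U|/n\le K$, which reduces it to the moments of $V$.
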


\begin{proof}
Consider also the probability measures
\begin{equation}
 \begin{split}
 P_{t,z}(\dd x)&=M(t,z)^{-1}(1+t u(x))e^{zu(x)}P(\dd x),\\
 M(t,z)&=\E\{(1+t u)e^{zu}\},\qquad |t|\le t_0,\quad |z|\le2.
 \end{split}
\end{equation}
For fixed $t,z$, let $Q=P_{t,z}$.  We write $\E_Q$ and $\E_{t,z}$
for expectation under the product law.  Set $w(x)=(1+t u(x))e^{zu(x)}$.
Since $u$ is bounded,
\begin{equation}\label{eq:student-tilt-weight}
 \begin{split}
 |w-1|&\le C_0|u|,\qquad |w-1-(t+z)u|\le C_0u^2,\\
 |M(t,z)-1|&\le C_0/n,\qquad M(t,z)^n\le C_0.
 \end{split}
\end{equation}
The density $dQ/dP$ is bounded above and below by positive absolute
constants, uniformly in $t,z$.

Let $\mu_u=\E_Q u$ and $w_0=u^2-1/n$.  From
\eqref{eq:student-u-basic}, \eqref{eq:student-tilt-weight}, and
$n^{-1/2}\le\rho$, we obtain
\begin{equation}\label{eq:student-tilt-u}
 \begin{split}
 |\mu_u|&\le C_0/n,\qquad
 |\E_Q w_0|\le C_0\rho/n,\\
 n\E_Q w_0^2+n\E_Q|u w_0|&\le C_0\rho.
 \end{split}
\end{equation}
For example, $n\E|u|^3\le C_0\rho$ and
$\E|u|\le n^{-1/2}$ bound the terms in
$n\E\{|u||u^2-1/n|\}$.  Rosenthal's inequalities show that all fixed
moments of $U$ and $\sum_i u_i^2$ are bounded under $Q$.  They also give
$\E_Q|\Delta_1|^p\le C_p\rho$ for $p\ge2$: apply
\eqref{eq:rosenthal-H} to $\sum_i(w_0(X_i)-\E_Qw_0)$, and use
\[
 2\sum_{i<j}u_i u_j=U^2-\sum_i u_i^2
\]
for the term with coefficient $(n-1)^{-1}$.

We next consider the pair kernels.  Define
\[
 q(x)=\E_Q v(x,X_1),\qquad \mu_v=\E_Q v(X_1,X_2),\qquad
 \bar q=q-\mu_v,
\]
and let
\[
 v_Q(x,y)=v(x,y)-q(x)-q(y)+\mu_v
\]
be the canonical projection under $Q$.  Canonicality under $P$ gives
\[
 q(x)=M(t,z)^{-1}\E\{v(x,X_1)(w(X_1)-1)\}.
\]
Hence the Cauchy--Schwarz inequality and \eqref{eq:student-v-L2} imply
\begin{equation}\label{eq:student-tilt-v-projections}
 \norm q_\infty\le C_0/n,
 \qquad n^3\E_Q q(X_1)^2\le C_0\rho.
\end{equation}
Indeed, $|q(x)|^2\le C_0 n^{-1}\E v(x,X_1)^2$.
For all fixed nonnegative integers $r,s$, we also have
\begin{equation}\label{eq:student-tilt-pair}
 \left|\E_Q\{u_1^r u_2^s v_{12}\}\right|
       \le C_{r,s}\rho/n^2.
\end{equation}
To prove this, write the expectation under $P$ with factors
$u^r w$ and $u^s w$.  When an exponent is zero, subtract $1$ from the
corresponding factor, using canonicality.  Each resulting factor has
absolute value at most a constant times $|u|$.  Equation
\eqref{eq:student-one-edge} proves \eqref{eq:student-tilt-pair}.
In particular, $|\mu_v|\le C_0\rho/n^2$.

The kernel $v_Q$ satisfies the assumptions of
Lemma~\ref{lem:student-kernel-mom}, with a different absolute $K$.
Indeed, uniformly in $t,z$,
\[
 \begin{gathered}
 \norm{v_Q}_\infty\le C_0,\qquad
 n^2\E_Q v_Q(X_1,X_2)^2\le C_0\rho,\\
 n\mathop{\rm ess\,sup}_{x}\E_Q v_Q(x,X_1)^2\le C_0.
 \end{gathered}
\]
The global bound follows from contraction of the canonical projection in
$L^2(Q^2)$ and the bounded marginal density $dQ/dP$.  The other two bounds
follow from \eqref{eq:student-v-L2}, \eqref{eq:student-tilt-v-projections},
and the bounds on $v$ and $\mu_v$.  By
\eqref{eq:student-tilt-v-projections}, the decomposition
\[
 V=\sum_{i<j}v_Q(X_i,X_j)+(n-1)\sum_i\bar q(X_i)
                  +\binom n2\mu_v
\]
gives $\E_Q|V|^p\le C_p\rho$ for $p\ge2$.
Indeed,
\[
 \norm{(n-1)\bar q}_\infty\le C_0,\qquad
 n(n-1)^2\E_Q\bar q^2\le C_0\rho.
\]
Rosenthal's inequality therefore bounds the $p$th moment of the centered
linear sum by $C_p\rho$, while the constant term is $O(\rho)$.

The same argument applies to
$v_1(x,y)=(u(x)+u(y))v(x,y)$.  Its global and conditional second moments
are bounded by constant multiples of those of $v$.  Define
\[
 q_1(x)=\E_Qv_1(x,X_1),\qquad \mu_1=\E_Qv_1(X_1,X_2).
\]
Then
\[
 q_1(x)=u(x)q(x)+M(t,z)^{-1}
              \E\{u(X_1)w(X_1)v(x,X_1)\}.
\]
The Cauchy--Schwarz inequality and \eqref{eq:student-tilt-v-projections} give
\[
 \norm{q_1}_\infty\le C_0/n,\qquad
 n^3\E_Qq_1(X_1)^2\le C_0\rho.
\]
Also, \eqref{eq:student-tilt-pair} gives
$|\mu_1|\le C_0\rho/n^2$.  Applying the preceding decomposition and
Rosenthal's inequality to $v_1$ yields
$\E_Q|\sum_{i<j}v_1(X_i,X_j)|^p\le C_p\rho$.
Since $|U|/n\le K$, formula \eqref{eq:student-B} gives
$\E_Q|\Delta_2|^p\le C_p\rho$.

For the quadratic denominator term, let
$r_i^Q=\sum_{j\ne i}v_Q(X_i,X_j)$.  The identity
\begin{equation}
 r_i=r_i^Q+(n-2)\bar q(X_i)+\sum_j\bar q(X_j)+(n-1)\mu_v
\end{equation}
shows that
$\E_Q(\sum_i r_i^2)^p\le C_p\rho$ for every $p\ge1$.
For the first term, apply Lemma~\ref{lem:student-kernel-mom}.
For the second, apply \eqref{eq:rosenthal-positive} to
$\sum_i(n\bar q(X_i))^2$: the summands are bounded and their total
expectation is at most $C_0\rho$.
For the third, \eqref{eq:rosenthal-H} gives
\[
 \E_Q\left\{n\Big(\sum_i\bar q(X_i)\Big)^2\right\}^{p}
 \le C_p n^{-p}\rho.
\]
Here the inequality is applied to the centered variables $n\bar q(X_i)$,
whose essential supremum and total variance satisfy the preceding bounds.
The last term has squared norm at most
$C_0 n^3\mu_v^2\le C_0\rho^2/n$.
Since $0\le \Delta_3\le C_0\sum_i r_i^2$, we obtain
\begin{equation}\label{eq:student-tilt-moments}
 \E_Q(|V|^p+|\Delta_1|^p+|\Delta_2|^p+|\Delta|^p)\le C_p\rho
 \quad(p\ge2),\qquad
 \E_Q \Delta_3^p\le C_p\rho\quad(p\ge1).
\end{equation}

For every nonnegative random variable $G$,
\begin{equation}
 \E_t(e^{zU}G)=M(t,z)^n\E_{t,z}G.
\end{equation}
The bound $|U|^j\le C_j(e^U+e^{-U})$, together with
\eqref{eq:student-tilt-weight} and \eqref{eq:student-tilt-moments} for
$z=1,-1$, proves \eqref{eq:student-weighted-moments}.

It remains to bound the signed expectations.  All fixed moments of
$U_{-12}=\sum_{i=3}^n u_i$ are bounded under $Q$.  By independence and
\eqref{eq:student-tilt-pair}, for every fixed integer $j\ge0$,
\begin{align*}
 \E_Q(U^jV)
 &=\binom n2\sum_{\ell=0}^j\binom j\ell
        \E_QU_{-12}^{j-\ell}
        \E_Q\{(u_1+u_2)^\ell v_{12}\},\\
 |\E_Q(U^jV)|&\le C_j\rho.
\end{align*}
Replacing $v_{12}$ by $(u_1+u_2)v_{12}$ gives the same estimate.
Equation~\eqref{eq:student-B} consequently yields
$|\E_Q(U\Delta_2)|\le C_0\rho$.
Finally,
\[
 \E_Q\left\{U\sum_iw_0(X_i)\right\}
 =n\E_Q(u w_0)+n(n-1)\mu_u\E_Qw_0.
\]
This is $O(\rho)$ by \eqref{eq:student-tilt-u}.
The expectation of the other term in $U\Delta_1$ has absolute value at most
\[
 \frac{C_0}{n}\E_Q\left\{|U|^3+|U|\sum_i u_i^2\right\}
 \le C_0/n\le C_0\rho.
\]
Taking $z=0$ proves \eqref{eq:student-signed-means}.
\end{proof}

\subsection{Completion of the $5/3$ argument}

We now bound the three quantities in \eqref{eq:finite-order-bound} for
the remainders $R_r$.  The third-moment term for the linear part is bounded in
\eqref{eq:student-u-basic}.  The preceding lemmas give the other two bounds.

\begin{lemma}
For $r=0,1$,
\begin{equation}\label{eq:student-R-L2}
 \E\Delta^2+\E\Delta^4\le C_0\rho,\qquad
 \E R_r^2\le C_0\rho,
\end{equation}
and
\begin{equation}\label{eq:student-tilted-mean-R}
 \sup_{|t|\le t_0}|\E_tR_r|\le C_0\rho.
\end{equation}
\end{lemma}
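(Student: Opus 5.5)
The plan is to expand $R_r$ algebraically and reduce every term to the estimates of Lemma~\ref{lem:student-tilted}. At $t=0$ the measure $P_t$ is $P$, so the bounds there apply under $P$ as well. By \eqref{eq:student-Yr},
\[
 R_r=d_n+V-\tfrac12(d_n+U+V)\Delta+r(d_n+U+V)\Delta^2 .
\]
The first bound in \eqref{eq:student-R-L2} is immediate from \eqref{eq:student-weighted-moments} with $j=0$ and $p=2,4$.

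For $\E R_r^2$, I would expand the square and bound each product by Cauchy--Schwarz or H\"older, always keeping powers of $U$ as weights:
\[
 \E\{(1+U^2)\Delta^2\},\qquad \E\{(1+U^2)\Delta^4\},\qquad \E V^2\Delta^2\le(\E V^4\,\E\Delta^4)^{1/2},\qquad \E V^2\Delta^4 .
\]
Each of these is at most $C_0\rho$ by \eqref{eq:student-weighted-moments}. The deterministic terms $d_n$ and $d_n^2$ are $O(\rho)$ because $|d_n|\le C\rho\le1$. The one point to watch is that a product like $\E(V^2\Delta^2)$ must yield $\rho$ and not $\rho^{1/2}$. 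Cauchy--Schwarz gives $(\rho\cdot\rho)^{1/2}=\rho$, so this works, and the weighted moment bounds already carry the $U$ weights.

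For the tilted mean \eqref{eq:student-tilted-mean-R}, I would write
\[
 \E_tR_r=d_n+\E_tV-\tfrac12\E_t(U\Delta)-\tfrac12\E_t\{(d_n+V)\Delta\}+r\,\E_t\{(d_n+U+V)\Delta^2\}.
\]
The terms $\E_tV$, $\E_t(U\Delta_1)$ and $\E_t(U\Delta_2)$ are handled by the signed bound \eqref{eq:student-signed-means}. The term $\E_t(U\Delta_3)$ is bounded by $\E_t\{|U|\Delta_3\}\le C\rho$, using $\Delta_3\ge0$ and the second line of \eqref{eq:student-weighted-moments} with $j=1$. The remaining terms are bounded in absolute value:
\begin{itemize}
\item $|d_n|\,\E_t|\Delta|\le C\rho$;
\item $\E_t|V\Delta|\le(\E_tV^2\,\E_t\Delta^2)^{1/2}\le C\rho$;
\item $\E_t\{(1+|U|)\Delta^2\}\le C\rho$;
\item $\E_t|V|\Delta^2\le(\E_tV^2\,\E_t\Delta^4)^{1/2}\le C\rho$.
\end{itemize}
All of these are uniform in $|t|\le t_0$.

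The main obstacle is the linear term $\E_t(U\Delta)$. A direct Cauchy--Schwarz bound only gives $\rho^{1/2}$, so this is where the signed estimates must be used. The decomposition $\Delta=\Delta_1+\Delta_2+\Delta_3$ is exactly what makes it work: the first two pieces have signed means of order $\rho$, and the third is nonnegative with a weighted first moment of order $\rho$. Everything else is at least quadratic in the small quantities $V$ and $\Delta$, and is absorbed by the $p$th moment bounds.
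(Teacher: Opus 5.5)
Your proposal is correct and follows the paper's proof essentially line by line. The moments of $\Delta$ and the bound on $\E R_r^2$ come from the weighted moment bounds at $t=0$ combined with Cauchy--Schwarz or H\"older. The only delicate term, $\E_t(U\Delta)$, is handled exactly as in the paper: by the signed means for $\Delta_1$ and $\Delta_2$, together with $\Delta_3\ge0$ and the weighted bound with $j=1$. The remaining differences are cosmetic: you absorb $d_n$ into the factor $d_n+U+V$, and you bound $\E_t|V|\Delta^2$ by $(\E_tV^2\,\E_t\Delta^4)^{1/2}$ rather than with H\"older exponents $3$ and $3/2$.
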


\begin{proof}
The bounds for $\Delta$ follow from
Lemma~\ref{lem:student-tilted} with $t=0$.  Write
\begin{equation}\label{eq:student-R-expanded}
 R_r=V-\tfrac12(U+V)\Delta+r(U+V)\Delta^2
          +d_n(1-\Delta/2+r\Delta^2).
\end{equation}
By \eqref{eq:student-weighted-moments},
$\E(U^2\Delta^2)+\E(U^2\Delta^4)\le C_0\rho$.
H\"older's inequality gives
\[
 \E(V^2\Delta^2)\le(\E V^4\,\E\Delta^4)^{1/2}\le C_0\rho,
 \qquad
 \E(V^2\Delta^4)\le(\E V^6)^{1/3}(\E\Delta^6)^{2/3}\le C_0\rho.
\]
Together with $|d_n|\le C_0\rho$, these estimates prove
$\E R_r^2\le C_0\rho$.

It remains to prove \eqref{eq:student-tilted-mean-R}.
The bounds in \eqref{eq:student-signed-means} and the estimate
$\E_t|U\Delta_3|\le C_0\rho$ give $|\E_t(U\Delta)|\le C_0\rho$.
The remaining products in \eqref{eq:student-R-expanded} satisfy
\begin{align*}
 \E_t|V\Delta|&\le(\E_tV^2\,\E_t\Delta^2)^{1/2}\le C_0\rho,\\
 \E_t(|U|\Delta^2)&\le C_0\rho,\\
 \E_t(|V|\Delta^2)&\le(\E_t|V|^3)^{1/3}
                          (\E_t|\Delta|^3)^{2/3}\le C_0\rho.
\end{align*}
The expectation of the term containing $d_n$ has absolute value at most $C_0\rho$.
This proves \eqref{eq:student-tilted-mean-R}.

\end{proof}

We now apply Corollary~\ref{cor:finite-order} and the sandwich lemma.

\begin{proof}[Proof of Theorem~\ref{thm:studentized}]
For $r=0,1$, write $Y_r=U+R_r$.  The finite-order expansion in
\eqref{eq:student-Yr} allows us to apply
Corollary~\ref{cor:finite-order}, with $d=10$ and an absolute constant $K$.
Equations \eqref{eq:student-u-basic} and \eqref{eq:student-R-L2}, together
with \eqref{eq:student-tilted-mean-R}, give
\[
 n\E|u_1|^3+\Var R_r+\sup_{|t|\le t_0}|\E_tR_r|\le C\rho.
\]
Hence
\begin{equation}\label{eq:student-Yr-normal}
 \norm{F_{Y_r}-\Phi}_\infty\le C\rho,\qquad r=0,1.
\end{equation}
By \eqref{eq:student-R-L2} and Markov's inequality,
\begin{equation}\label{eq:student-bad-denominator}
 \Pp(|\Delta|>1/2)\le4\E\Delta^2\le C\rho.
\end{equation}
Equations \eqref{eq:student-exact-ratio},
\eqref{eq:student-Yr-normal}, and \eqref{eq:student-bad-denominator},
together with Lemma~\ref{lem:student-sandwich}, prove the desired estimate
for $\sqrt n\,U_n^\flat/(2\widehat\sigma_\flat)$.

On $\mathcal E_n^c$, all pairwise values of $h^\flat$ equal the
corresponding values of $h$.  The numerator and the jackknife
denominator therefore agree with those of $\widehat S_n$.
Equation~\eqref{eq:student-censor-prob} transfers the bound to the original
statistic.  Also,
\[
 \Pp(\widehat\sigma_g=0)
 \le\Pp(\mathcal E_n)+\Pp(\Delta=-1)\le C\rho,
\]
so the convention used when the denominator is zero does not affect the result.
Restoring $\sigma_g$ proves \eqref{eq:studentized-main}.
\end{proof}

The truncation probability and the remainder variance are both bounded
by $C\rho$.  Theorem~\ref{thm:main} adds the latter directly to the normal
approximation error.  This gives the $5/3$ moment bound without a
logarithmic factor.

\subsection{Optimality}

We now establish optimality of the rate and moment exponents for bounds
uniform over the distribution and kernel.

\begin{proposition}\label{prop:student-sharp}
The rate $n^{-1/2}$ in Theorem~\ref{thm:studentized} is optimal, even when
$g$ is bounded and $\eta=0$.  With $\sigma_g=1$, each of the two moment
terms in \eqref{eq:studentized-main} can separately have the same order as
the error.  In each case, the other term is of smaller order and the error
tends to zero.
The following optimality assertions also hold.
\begin{enumerate}[label=\textup{(\roman*)}]
\item For every $1\le p<3$, there are laws and kernels, depending on $n$,
with $\sigma_g=1$, $\eta=0$ and $\sup_n\E|g_1|^p<\infty$, such that
\begin{equation}\label{eq:student-moment-sharp}
 \sqrt n\,\norm{F_{\widehat S_n}-\Phi}_\infty\longrightarrow\infty.
\end{equation}
\item For every $1\le q<5/3$, there are laws and kernels, depending on $n$,
with $g_1$ standard normal and $\sup_n\E|\eta_{12}|^q<\infty$, for which
\eqref{eq:student-moment-sharp} holds.
\end{enumerate}
\end{proposition}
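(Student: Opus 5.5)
We treat the several assertions of Proposition~\ref{prop:student-sharp} separately. Each is obtained by an explicit example in which the Studentized statistic reduces to a Studentized sample mean or to a statistic perturbed by a rare event.

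\textbf{Optimal rate with bounded $g$ and $\eta=0$.} Take $h(x,y)=x+y$ with $X_i$ Rademacher, so $g(x)=x$ and $\eta=0$. Then $\widehat S_n$ is a monotone function of $\sum_i\varepsilon_i$ on $\{\widehat\sigma_g>0\}$; one checks that $\widehat S_n$ is a multiple of the classical Studentized mean. Its law therefore has an atom of mass $\asymp n^{-1/2}$ at the image of $\sum\varepsilon_i=0$, namely the point $0$. As in the proof of Proposition~\ref{prop:main-sharp}(i), this atom gives Kolmogorov distance $\ge c n^{-1/2}$ from $\Phi$.

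\textbf{The $\eta$ term can dominate.} Use the collision construction \eqref{eq:collision-definitions}. Take $X_i=(Z_i,J_i)$ and $h(x,y)=n^{-1/2}\cdot$ scaling chosen so that $g$ is a normalized Gaussian, with $\eta_{ij}=c_n(\1_{\{J_i=J_j\}}-m_n^{-1})$. Choose $c_n$ and $m_n$ so that $n^{-1/2}\E|\eta|^{5/3}\asymp \lambda_n$, while $\sqrt n(U_n-\theta)/2$ equals $L_n+T_n$ up to a constant shift of order $\lambda_n$. This gives $c_n\asymp n^{3/2}$ up to normalization, matching the $5/3$ scale, since $\E|\eta|^{5/3}\approx c_n^{5/3}/m_n$. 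At threshold $0$, positivity of the jackknife denominator gives $F_{\widehat S_n}(0)=F_{L_n+T_n}(0)$. By \eqref{eq:rare-shift}, $F_{\widehat S_n}(0)-\tfrac12\asymp\lambda_n$. Meanwhile $\E|g|^3/\sqrt n\asymp n^{-1/2}=o(\lambda_n)$ once $m_n$ is chosen so that $\lambda_n\gg n^{-1/2}$ and $\lambda_n\to0$.

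\textbf{The $g$ term can dominate.} With $\eta=0$, we reduce to the Studentized mean. Take $g$ as a mixture that is normal with probability $1-p_n$ and has a large atom $\pm M_n$ with probability $p_n$, chosen so that $\E|g|^3/\sqrt n\asymp p_nM_n^3/\sqrt n\to0$ but is $\gg n^{-1/2}$. The skewness correction then yields an error of exactly that order; this is the known lower bound for self-normalized sums. Alternatively, a single-large-jump event of probability $np_n$ shifts the distribution function by that amount near $0$.

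\textbf{Moment-exponent optimality, items (i) and (ii).} These come from the same constructions with parameters tuned so that the relevant $p$-th ($p<3$) or $q$-th ($q<5/3$) moment stays bounded while the error is $\gg n^{-1/2}$. For (ii), take $c_n=n^{3/2}$ and $m_n=n^{\alpha}$ with $\lambda_n\asymp n^{2-\alpha}$. Then $\E|\eta|^q\asymp n^{3q/2-\alpha}$ stays bounded if $\alpha=3q/2<5/2$, which gives $\lambda_n\asymp n^{2-3q/2}\gg n^{-1/2}$. Hence $\sqrt n\,\norm{F-\Phi}_\infty\to\infty$. Item (i) is analogous, with atoms at $\pm n^{\beta}$ of mass $n^{-\beta p}$ and $\beta p$ slightly below $3/2$.

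\textbf{Main obstacle.} The main obstacle is the $g$-dominated case and item (i). There the Studentization interacts with the heavy atom, so the lower bound needs a careful two-sided expansion rather than an atom argument. I would first try conditioning on the number of large atoms, as in \eqref{eq:rare-shift}. With zero atoms, the error is $O(n^{-1/2})$ by the Berry--Esseen bound for a Studentized bounded sum. With exactly one atom, $\widehat S_n$ is close to $\operatorname{sign}(\pm M_n)$ times roughly $1$, which displaces mass of order $np_n$ away from $0$.
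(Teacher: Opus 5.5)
Your treatment of the rate (the Rademacher atom) and of the $\eta$-dominated case follows the paper. It uses the collision kernel with scale $A_n=\sqrt n(n-1)\asymp n^{3/2}$, the observation that at threshold $0$ only the sign of the numerator $L_n+D_n-\lambda_n$ matters, and \eqref{eq:rare-shift}. Two small repairs are needed there.

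\begin{itemize}
\item For odd $n$ there is no atom at $0$. Use the atom where exactly $\lfloor n/2\rfloor$ signs equal $1$.
\item The a.s.\ positivity of $\widehat\sigma_g$ needs a reason. Conditionally on the labels, the centered jackknife row means are a translate of $\frac{n-2}{n-1}(Z_i-\bar Z)_{i}$, which is nondegenerate on the sum-zero subspace.
\end{itemize}

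In item (ii), $m_n=n^{3q/2}$ gives $\lambda_n\asymp n^{2-3q/2}$, which does not tend to $0$ when $q\le 4/3$. Then \eqref{eq:rare-shift} is unavailable, and ``$\lambda_n\gg n^{-1/2}$'' is not a lower bound on a Kolmogorov distance. Take $m_n=\lceil n^{\xi}\rceil$ with $\max\{2,3q/2\}<\xi<5/2$, as the paper does. Alternatively, treat $q'$ close to $5/3$ and apply Lyapunov's inequality.

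The genuine gap is the claim that the third-moment term can be attained, together with item (i), which you leave open. The appeal to a ``skewness correction'' is not an argument: no Edgeworth expansion with remainder $o(\E|g_1|^3/\sqrt n)$ is available for such $n$-dependent laws. With atoms of size $\asymp\sqrt n$, every cumulant of order $j\ge3$ of $n^{-1/2}\sum_ig_i$ has the same order $np_n$. Your specific constructions also fail:

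\begin{itemize}
\item Symmetric atoms $\pm M_n$ make $\widehat S_n$ symmetric.
\item Atoms at $n^{\beta}$ of mass $n^{-\beta p}$ give $\E g_1^2\asymp n^{\beta(2-p)}\to\infty$ for $p<2$, contradicting $\sigma_g=1$.
\item The picture in which one atom sends $\widehat S_n$ to about $\pm1$ is valid only for $M_n\gg\sqrt n$. In that regime the displaced mass is of order $np_n\ll p_nM_n^3/\sqrt n$, so it cannot show that the third-moment term is attained. For $2<p<3$ it could be made to prove (i), but that argument is not given.
\end{itemize}

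The missing idea is to reuse your threshold-zero trick with a \emph{one-sided} rare atom at scale exactly $\sqrt n$. Take $X_i=(Z_i,B_i)$ with $B_i\sim$ Bernoulli$(\varepsilon_n)$, $\varepsilon_n=n^{-1-\alpha}$ and $0<\alpha<1/2$, and set $g_n(z,b)\propto z+\sqrt n(b-\varepsilon_n)$ and $\eta=0$. The jackknife variance is then the sample variance, which is a.s.\ positive. So the sign of $\widehat S_n$ is that of $n^{-1/2}\sum_iZ_i+K_n-\lambda_n$, where $K_n=\sum_iB_i$ and $\lambda_n=n^{-\alpha}$. Thus \eqref{eq:rare-shift} gives $F_{\widehat S_n}(0)-\frac12=c_*\lambda_n+O(\lambda_n^2)$ without any analysis of the denominator. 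Meanwhile $\E|g_1|^3/\sqrt n\sim\lambda_n$. Finally, $\E|g_1|^p\le C_p(1+n^{p/2-1-\alpha})$ stays bounded once $\max\{0,p/2-1\}<\alpha<1/2$, which gives (i).
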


\begin{proof}
First take $h(x,y)=x+y$, where the observations are independent symmetric
sign variables.
The Studentized statistic is the usual Studentized sample mean.  It has
an atom of mass at least $2^{-n}\binom n{\lfloor n/2\rfloor}$, obtained
when exactly $\lfloor n/2\rfloor$ observations equal $1$.  The sample
variance is positive on this event.  The atom argument in
Proposition~\ref{prop:main-sharp}, together with
Theorem~\ref{thm:studentized}, proves optimality of the rate.

We next use the collision example \eqref{eq:collision-definitions}.  Let
$A_n=\sqrt n(n-1)$ and define the order-two kernel
\begin{equation}
 \begin{split}
 h_n((z,j),(z',j'))&=z+z'+\eta_n(j,j'),\\
 \eta_n(j,j')&=A_n\{\1_{\{j=j'\}}-m_n^{-1}\}.
 \end{split}
\end{equation}
Its mean is zero, its first projection is $g(z,j)=z$, and its canonical
kernel is $\eta_n$.  In particular, $\sigma_g=1$.  The normalized
numerator is exactly
\[
 \frac{\sqrt n\,U_n}{2}=L_n+D_n-\lambda_n.
\]
Let $\bar Z=n^{-1}\sum_{i=1}^nZ_i$.  Conditionally on the labels $J_i$,
the vector of centered jackknife row means is a translate of
$\frac{n-2}{n-1}(Z_i-\bar Z)_{i=1}^n$.
This Gaussian vector is nondegenerate on the subspace of vectors whose
coordinates sum to zero.  Hence $\widehat\sigma_g>0$ almost surely, and the sign of
$\widehat S_n$ is that of its numerator.  Equation~\eqref{eq:rare-shift}
therefore gives
\begin{equation}\label{eq:collision-student-lower}
 F_{\widehat S_n}(0)-\tfrac12
       =c_*\lambda_n+O(\lambda_n^2).
\end{equation}

For every fixed $q\ge1$, the canonical moment is
\begin{equation}\label{eq:collision-student-moment}
 \E|\eta_{12}|^q
 =\frac{A_n^q}{m_n}
   \left\{(1-m_n^{-1})^q+(1-m_n^{-1})m_n^{1-q}\right\}
 \le\frac{2A_n^q}{m_n}.
\end{equation}
For $q>1$, this is asymptotic to $A_n^q/m_n$ as $m_n\to\infty$.
Taking $m_n=\lceil n^{9/4}\rceil$, we obtain
\[
 \frac{\E|\eta_{12}|^{5/3}}{\sqrt n}\sim2\lambda_n,
 \qquad \frac{\E|g_1|^3}{\sqrt n}=o(\lambda_n).
\]
The lower bound \eqref{eq:collision-student-lower} and
Theorem~\ref{thm:studentized} show that the canonical moment term has the
same order as the error.
For any $1\le q<5/3$, choose
\[
 \max\{2,3q/2\}<\xi<5/2,\qquad m_n=\lceil n^\xi\rceil.
\]
Then \eqref{eq:collision-student-moment} gives a uniformly bounded $q$th moment,
whereas $\sqrt n\lambda_n\sim\tfrac12n^{5/2-\xi}\to\infty$.
This proves the second assertion.

For the first assertion and the sharpness of the linear moment term,
fix $0<\alpha<1/2$ and let $B_{i,n}$ be independent Bernoulli variables
with success probability $\varepsilon_n=n^{-1-\alpha}$, independently
of standard normal variables $Z_i$.  Set $X_{i,n}=(Z_i,B_{i,n})$ and define
\begin{equation}
 \begin{split}
 g_n(z,b)&=\frac{z+\sqrt n(b-\varepsilon_n)}
                    {\sqrt{1+n\varepsilon_n(1-\varepsilon_n)}},\\
 h_n(x,y)&=g_n(x)+g_n(y).
 \end{split}
\end{equation}
The first projection has mean zero and variance one, and $\eta=0$.
The jackknife variance is again positive almost surely.  The sign of the
numerator is the sign of
\[
 n^{-1/2}\sum_iZ_i+K_n-\lambda_n,\qquad
 K_n=\sum_i B_{i,n},\qquad \lambda_n=n\varepsilon_n=n^{-\alpha}.
\]
Since $\E K_n=\lambda_n$ and
$\E\{K_n(K_n-1)\}\le\lambda_n^2$, the calculation leading to
\eqref{eq:rare-shift} applies to $K_n$.  Thus
\begin{equation}\label{eq:linear-student-lower}
 F_{\widehat S_n}(0)-\tfrac12
       =c_*\lambda_n+O(\lambda_n^2).
\end{equation}
Conditioning on $B_{1,n}$ gives
\[
 \E|g_n(Z_1,B_{1,n})|^3
       =(1+o(1))n^{3/2}\varepsilon_n+O(1)
       \sim n^{1/2-\alpha}.
\]
Consequently, the linear moment term in \eqref{eq:studentized-main} is
asymptotic to $\lambda_n$, and the theorem gives a matching upper bound.
Finally, for every fixed $p\ge1$,
\[
 \E|g_n(Z_1,B_{1,n})|^p
       \le C_p\{1+n^{p/2-1-\alpha}\}.
\]
For $1\le p<3$, choose
$\max\{0,p/2-1\}<\alpha<1/2$.  The $p$th moments are uniformly bounded, while
\eqref{eq:linear-student-lower} gives
$\sqrt n\,\norm{F_{\widehat S_n}-\Phi}_\infty
\ge c n^{1/2-\alpha}\to\infty$.
\end{proof}


\end{document}